\documentclass[sn-mathphys,Numbered]{sn-jnl} 

\usepackage{anyfontsize}
\usepackage{graphicx} 
\usepackage{multirow} 
\usepackage{amsmath,amssymb,amsfonts} 
\usepackage{amsthm} 
\usepackage{mathrsfs} 
\usepackage[title]{appendix} 
\usepackage{xcolor} 
\usepackage{float}
\usepackage{textcomp} 
\usepackage{manyfoot} 
\usepackage{booktabs} 
\usepackage{algorithm} 
\usepackage{algorithmicx} 
\usepackage{algpseudocode} 
\usepackage{listings}
\usepackage{array}
\usepackage{pdflscape}
\usepackage{caption}
\usepackage{subcaption}
\usepackage[utf8]{inputenc}
\usepackage{booktabs}
\usepackage{amsfonts}
\usepackage{siunitx}
\usepackage{longtable}
\usepackage{eucal}
\usepackage{multirow}
\usepackage{enumerate}
\usepackage{lineno}
\usepackage{framed}
\usepackage{placeins}
\usepackage[disable]{todonotes}

\newtheorem{theorem}{Theorem}[section]
\newtheorem{proposition}{Proposition}[section] 
\newtheorem{lemma}{Lemma}[section] 
  
\newtheorem{example}{Example}[section] 
\newtheorem{remark}{Remark}[section] 
\newtheorem{assumption}{Assumption}[section] 
\newtheorem{definition}{Definition}[section]
\begin{document}
\title[Hager-Zhang Conjugate Gradient Method of Set Optimization]{Hager-Zhang Conjugate Gradient Method for Set Optimization with Set-Valued Objective Map of Finite Cardinality}

\author[1]{\fnm{Ravi} \sur{Raushan}}\email{raviraushan.rs.mat21@itbhu.ac.in}

\author*[1]{\fnm{Debdas} \sur{Ghosh}}\email{debdas.mat@iitbhu.ac.in}

\author[2,3]{\fnm{Zai-Yun} \sur{Peng}}\email{pengzaiyun@126.com}

\affil[1]{\orgdiv{Department of Mathematical Sciences}, \orgname{Indian Institute of Technology (BHU)}, \orgaddress{\city{Varanasi}, \postcode{221005}, \state{Uttar Pradesh}, \country{India}}}

\affil[2]{\orgdiv{School of Mathematics}, \orgname{Yunnan Normal University}, \orgaddress{\city{Kunming}, \postcode{650500},  \country{P.R. China}}}

\affil[3]{\orgdiv{College of Mathematics and Statistics}, \orgname{Chongqing JiaoTong University}, \orgaddress{\city{Chongqing}, \postcode{400074},  \country{P.R. China}}}

\abstract{This work introduces a nonlinear Hager-Zhang conjugate gradient method for solving set optimization problems. The objective function under consideration is defined by a finite collection of continuously differentiable functions. 
Notably, the proposed approach imposes restrictions neither on the existence of a finite generator of the ordering cone nor  
on any regularity condition at the optimal solution.  As a result, the proposed method holds considerable significance for both set optimization and vector optimization problems, with the latter serving as a special case of the former. 
The study begins by discussing Wolfe line search conditions using Drummond-Svaiter scalarization function. Thereafter, we establish the existence of a step length satisfying the Wolfe line search conditions along a descent direction. The Hager-Zhang scalar conjugate parameter is introduced to derive the search direction for the proposed method. It is established that the direction generated by the proposed method is a descent direction. The well-definedness of the proposed method is given. Furthermore, we discuss some important results and a Zoutendijk-like condition to ensure global convergence. Subsequently, the global convergence of the proposed method is established in an asymptotic manner. Finally, numerical experiments on various test problems validate the practical performance and effectiveness of the proposed technique. \\

\noindent{\textbf{Keywords}Conjugate gradient method, Hager-Zhang parameter, Vector optimization, Set optimization, Wolfe conditions.}\\ 

\smallskip

\noindent{\textbf{Mathematics Subject Classifications}
49J53, 90C29, 90C47.
 }}

\maketitle
\section{Introduction}
Set optimization problems generalize traditional optimization by considering set-valued maps as objectives or constraints rather than just scalar or vector-valued functions. These problems naturally arise in scenarios where the objective or constraint functions are adhered to uncertainty, in the duality theory of multi-criteria decision-making, or by the intrinsic structure of the problem, e.g., portfolio optimization in finance, where the objective map represents the set of achievable returns for different risk levels. In such optimization problems, the general goal is to optimize a set-valued map \( F: X \to \mathscr{P}(Y) \), where \( X \) is the decision space, \( Y \) is the outcome space, and \(\mathscr{P}(Y) \) represents the power set of \( Y \).\\

Set optimization is significant for its adaptability and diverse range of applications. It plays a critical role in:
\begin{itemize}
    \item Game theory: Modeling strategies with payoffs as sets of possible outcomes, aiding in robust decision-making.
    \item Economics: Addressing multi-agent systems where objectives or constraints have overlapping or uncertain outcomes.
    \item Mathematical finance: Handling portfolio optimization under uncertainty by considering sets of potential returns.
    \item Robust optimization: Capturing worst-case or uncertain scenarios by modelling parameters as sets. 
\end{itemize}
Detailed references on the mathematical and practical applications of set optimization problems are available in \cite{Ghosh2024newton,Kishor Weighted Aggregation 2024,khan2016set}.\\

Solution concepts in set optimization are commonly defined using two approaches: the vector approach and the set approach. The primary limitation of the vector approach is its inability to address scenarios where the decision-maker's preference relies on comparing individual elements within the image set. To address this, the set approach provides a more versatile alternative.
Kuroiwa, in \cite{Kuroiwa1996min}, introduced the set approach, which employs a pre-order relation on the power set of the image space to define minimal solutions. 
This method has been widely adopted by researchers, as demonstrated in works such as \cite{khan2016set, Vector2012optimization, Variational2019analysis,
Characterizations2018relations, 
steepest2021set_optimization} and their references.
In this article, we also adopt the set approach with the lower set pre-order $\preceq^{\ell}$ corresponding to a given ordering cone $K\subset Y$ as proposed in \cite{Kuroiwa1996min} and find the weakly minimal set accordingly. The ordering cone $K$ is assumed to be closed, convex, solid (interior of $K$ is non-empty), and pointed ($K\cap (-K)=\{0\}$).  
The lower set pre-order $\preceq^{\ell}$ is defined as follows: for $U,V\in \mathscr{P}(Y)$,
\[
U\preceq^{\ell}V \;\iff\; V\subseteq U+K.
\]
Similarly, the strict lower pre-order $\prec^{\ell}$ is defined by
\[
U\prec^{\ell}V \;\iff\; V\subseteq U+\mathrm{int}(K),
\]
where $\mathrm{int}(K)$ denotes the interior of the cone $K$. \\

There are several approaches to solving set optimization problems and identifying solutions. These methods are generally classified into five main groups: sorting-type methods, derivative-free methods, scalarization-based methods, branch and bound approach, and first-order solution methods. The first four types of methods have some drawbacks, as highlighted in \cite{steepest2021set_optimization}. This work endeavours to propose a method grounded in first-order solution techniques.\\

In this work, we are interested in the following \emph{unconstrained set-valued optimization problem with respect to the lower set pre-ordering} 
\begin{equation}\label{SPL}
    \text{($\mathcal{SOP}_{\ell}$)} ~~\preceq^{\ell}\text{-}~\underset{~x\in \mathbb{R}^{n}}{~\min}F(x),
\end{equation}
where the set-valued map $F:\mathbb{R}^{n}\rightrightarrows\mathbb{R}^{m}$ is defined as 
$$ F(x):=\left\{f^{1}(x),f^{2}(x), \ldots, f^{p}(x)\right\},$$ where $f^{1},f^{2},\ldots,f^{p}:\mathbb{R}^{n}\rightarrow \mathbb{R}^{m}$ are continuously differentiable function.\\

Note that the $\mathcal{SOP}_{\ell}$ \eqref{SPL} is an extension of an unconstrained vector optimization problem. The concept of a weak Pareto-optimality has to be replaced with a weakly minimal solution. A point $\bar{x}\in \mathbb{R}^{n}$ is said to be a weakly minimal solution (local weakly minimal solution) if there is no $x$ in $\mathbb{R}^{n}$ (no $x$ in a neighbourhood of $\bar{x}$) such that $F(x)\prec^{\ell} F(\bar{x})$.\\

The proposed technique in this work is based on non-linear conjugate gradient methods, which have emerged as an important class of first-order algorithms for solving large-scale optimization problems, primarily due to their fast convergence and low memory requirements. Originally introduced in \cite{FR_1964} for conventional optimization problems, these methods have evolved into several notable variants, including the Hestenes-Stiefel (HS, 1952) method \cite{HS_1952}, the Fletcher-Reeves (FR, 1964) method \cite{FR_1964}, the Polak-Ribi\`ere-Polyak (PRP, 1969) method \cite{PRP_1969}, the Conjugate Descent (CD, 1987) method \cite{CD_1987}, the Dai-Yuan (DY, 1999) method \cite{DY_1999}, and the Hager-Zhang (HZ, 2005) method \cite{HZ_2005}. 
In \cite{Nonlinear2018conjugate}, P\'erez and Prudente extended conjugate gradient methods to vector optimization by introducing non-linear conjugate gradient methods. They adopted the Wolfe line searches and a Zoutendjik-like condition for vector-valued optimization and explored the FR, CD, DY, PRP, and HS methods within the framework of finitely generated cones. This work laid the foundation for further developments in the field.
In recent years, additional advancements have been made. In 2020, Gonçalves and Prudente extended the Hager-Zhang method \cite{extension2020Hager-Zhang} for vector optimization, demonstrating its global convergence and superior performance compared to earlier variants on many test problems. Building on this progress, Hu et al. introduced an alternative extension of the Hager-Zhang method \cite{hu2024alternative} in 2024, which showed further performance improvements. Like earlier approaches, these methods also operate within the framework of finitely generated cones. \\

Recently, Bouza et al. proposed the steepest descent method \cite{steepest2021set_optimization} for \(\mathcal{SOP}_{\ell}\) \eqref{SPL}, assuming regularity at the optimal point. They have developed optimality conditions for these types of problems and introduced concepts of critical points. These optimality conditions are based on the equivalence relation between \(\mathcal{SOP}_{\ell}\) \eqref{SPL} and a family of vector optimization problems at a given weakly minimal point. 
Under the same assumption, Ghosh et al. introduced Newton method \cite{ghosh2024newton} for \(\mathcal{SOP}_{\ell}\) \eqref{SPL}. For the same problem, Kumar et al. \cite{kumar2024nonlinear} developed a non-linear conjugate gradient method with two variants, FR and CD, by using Drummond-Svaiter scalarization, also based on the regularity assumption at the optimal point. In \cite{kumar2024nonlinear}, for a finitely generated cone \(K\), Wolfe's line search and a Zoutendijk-like condition were also established for \(\mathcal{SOP}_{\ell}\) \eqref{SPL}. 
More recently, Ghosh et al. \cite{Ghosh CGM} proposed a non-linear conjugate gradient method with three additional variants, DY, HS, and PRP, for \(\mathcal{SOP}_{\ell}\) \eqref{SPL}, notably without requiring the regularity assumption at the optimal point. The approach in \cite{Ghosh CGM} utilized an alternative generator and the Gerstewitz scalarizing function, introduced Wolfe's line search and Zoutendijk-like conditions, and established the convergence of the proposed method without the assumption of a finitely generated cone \(K\).\\

Notably, the Hager-Zhang non-linear conjugate gradient method \cite{hu2024alternative} has demonstrated exceptional performance compared to other variants in vector optimization. 
Motivated by these developments, this work aims to extend the Hager-Zhang conjugate gradient method for \(\mathcal{SOP}_{\ell}\) \eqref{SPL} using Drummond-Svaiter scalarizing function. This approach, like the one in \cite{Ghosh CGM}, does not rely on the assumption of finitely generated cones or impose regularity conditions on the optimal point. 
It is worth emphasizing that unconstrained vector optimization represents a particular case of \(\mathcal{SOP}_{\ell}\) \eqref{SPL}. Consequently, all the results derived in this study are applicable to vector optimization problems, potentially contributing to the improvement of methods in this domain.\\

This paper is structured as follows. The subsequent section presents the notations, basic definitions, and foundational results that are used throughout the study. In Section \ref{sec_3}, Wolfe line searches conditions and the existence of a step-size that satisfies these line search conditions are discussed.
 Additionally, the non-linear Hager-Zhang method to solve $\mathcal{SOP}_{\ell}$ \eqref{SPL} and its well-definedness are presented in Section \ref{sec_3}. 
Section \ref{sec_4} demonstrates that the proposed method is globally convergent. Numerical results illustrating the performance of the method are provided in Section \ref{sec_5}. Finally, Section \ref{sec_6} concludes the paper with a summary and suggestions for future research directions.

\section{Preliminaries and terminologies}

This section establishes the supporting notations and fundamental results that form the theoretical basis for the set optimization problems studied in the paper.
\\

For any $p \in \mathbb{N}$, we denote the set $\{1, 2, \dots, p\}$ by $[p]$. The notation $\mathbb{R}^m_{+},~ \mathbb{R}^m_{++},~\mathscr{P}(\mathbb{R}^m)$ refers to the nonnegative orthant, positive orthant, and power set of $\mathbb{R}^m$, respectively.
For a set $A \in \mathscr{P}(\mathbb{R}^m)$, $\mathrm{conv}(A)$ denotes the convex hull of $A$, and $\mathrm{int}(A)$ denotes the topological interior of $A$.
\\

We now discuss some definitions and properties of the norm of the vector and the matrix.\\

Given $x\in \mathbb{R}^{n}$. By the notations $\|x\|_{1}$, $\|x\|_p$, and $\|x\|_\infty$, we denote the quantities $\sum_{i=1}^{n}|x_{i}|$, $\left(\sum_{i=1}^{n}{\lvert{x_{i}\rvert}^{p}}\right)^{{1}/{p}}$, and $\max \left\{|x_1|, |x_2|, \ldots, |x_n|\right\}$, respectively, where $p\in \mathbb{N}$. These notations about the norm of a vector also imply the following generalized version of the Cauchy-Schwarz inequality:
\[\forall x,y\in \mathbb{R}^{n}:~ x^{\top}y \leq \|x\|_{p}\|y\|_{q},\]
 where $p,q\geq 1$ and $\frac{1}{p}+\frac{1}{q}=1.$ For simplicity, we denote the $\ell_{2}$-norm on $\mathbb{R}^{n}$, i.e., $\|\cdot\|_{2}$ by $\|\cdot\|$ in this paper.\\

 Let $\|\cdot\|_{a}$ and $\|\cdot\|_{b}$ be two norms on $\mathbb{R}^{n}$ and $\mathbb{R}^{m}$, respectively. For a given matrix $A\in \mathbb{R}^{m\times
n}$, the induced matrix norm $\|A\|_{ a,b}$ is defined by
\[\|A\|_{ a,b}: =\underset{x\in \mathbb{R}^{n}}{\max}\left\{\|Ax\|_{b}~\middle|~\|x\|_{a}\leq 1\right\}.\] 
The above definition implies the following inequality:
\[\|Ax\|_{b}\leq \|A\|_{a,b}\|x\|_{a} \text{ for all }x\in \mathbb{R}^{n}.\]
For simplicity, if $a=b$, we use $\|\cdot\|_{a}$ instead of $\|\cdot\|_{a,a}$. The above definition for the induced norm of $A$, we have the following results for $a=b$.
\begin{itemize}

    \item $\|A\|_{1}:=\underset{j\in[n]}{\max}\sum_{i=1}^{m} \lvert A_{i,j}\rvert$, where $A
    _{i,j}$ represents the element at the $(i,j)$
 position of the matrix $A$.\\

     \item $\|A\|_{2}:=\left(\lambda_{\max}(A^{\top}A)\right)^{\frac{1}{2}}$, where $\lambda_{\max}(A^{\top}A)$ is the maximum eigenvalue of the matrix $A^{\top}A$.\\

 \item $\|A\|_{\infty}:=\underset{i\in[m]}{\max}\sum_{j=1}^{n} \lvert A_{i,j}\rvert$.\\
\end{itemize}
\noindent
Moreover, by the definitions of $\|A \|_2$ and $\|A\|_{2,\infty}$, the following relation holds:
\begin{equation}\label{M_2}
    \|A\|_{2,\infty}\leq \|A\|_{2} \text{ for all } A\in \mathbb{R}^{m\times n}.
\end{equation}

Next, we present some definitions and properties based on the cone $K\in \mathscr{P}(\mathbb{R}^{m})$.\\

Let $y, z \in \mathbb{R}^m$. The vector $z$ is said to be dominated by $y$ with respect to the cone $K$, denoted by $y \preceq_K z$, if $z - y \in K$. Moreover, if $z - y \in \operatorname{int}(K)$, then $z$ is said to be strictly dominated by $y$ with respect to $K$, and this relation is denoted by $y \prec_K z$.\\


The dual cone of \( K \) is given by:
$$
K^{*} := \left\{w \in \mathbb{R}^m \middle | \Tilde{y}^\top w \geq 0 \quad \forall \,\Tilde{y} \in K\right\}.
$$
As the cone $K$ is convex and closed, it induces the following relations:
\begin{align*}
  &  K^{**}=K,
   -K = \left\{\Tilde{y} \in \mathbb{R}^m ~\middle |~ w^\top \Tilde{y} \leq 0 \quad \forall\, w \in K^*\right\} \text{ and}\\
  & -\mathrm{int}(K) = \left\{\Tilde{y} \in \mathbb{R}^m ~\middle |~ w^\top \Tilde{y} \leq 0 \quad \forall\, w \in K^* \setminus \{0\}\right\}.\\
\end{align*}

Let \( C \subset K^{*} \setminus \{0\} \) be a compact set. We say that \( C \) is a generator of \( K^{*} \) if the cone of the convex hull of \( C \) generates the set \( K^{*} \), i.e.,
    $K^{*}=\mathrm{cone}(\mathrm{conv}(C))$.\\

\noindent
Note that if the given cone \( K \) is polyhedral, a generator $C$ of its dual cone \( K^{*} \) can be represented by the finite set of extremal rays of \( K^{*} \) since \( K^{*} \) is also polyhedral in this case. Moreover, for $K=\mathbb{R}^{m+}$, we can take $C$ as the canonical basis of $\mathbb{R}^{m}$ as $K^{*}=\mathbb{R}^{m+}=K$. \\

\noindent
\emph{It is important to emphasize that at no point in the analysis of this work do we impose the condition that \( K \) is finitely generated}. For this, we consider the following generator $C$ of $K^*$.\\

\begin{lemma}\emph{\cite{Ghosh CGM}}\label{com_of_C} 
    The set
\begin{equation}\label{gen_polar_cone}
    C:=\left\{w\in K^{*} ~\middle|~ w^{\top}e=1\right\}, \quad e\in \mathrm{int}(K)
\end{equation}
is a generator of the dual cone $K^*$.
\end{lemma}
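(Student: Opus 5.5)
To show that $C$ is a generator of $K^*$ in the sense defined above, we must verify three things: $C\subset K^*\setminus\{0\}$, $C$ is compact, and $K^*=\mathrm{cone}(\mathrm{conv}(C))$. The plan is to use the single fact that, because $e\in\mathrm{int}(K)$, every nonzero $w\in K^*$ satisfies $w^\top e>0$. I would establish this first. Pick $\varepsilon>0$ with $e+\varepsilon B\subset K$, where $B$ is the closed unit ball. If $w\in K^*\setminus\{0\}$ had $w^\top e\le 0$, take $u=-w/\|w\|$. Then $e+\varepsilon u\in K$ but
\[
w^\top(e+\varepsilon u)\le -\varepsilon\|w\|<0,
\]
which contradicts $w\in K^*$. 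In fact the same argument gives the quantitative bound $w^\top e\ge \varepsilon\|w\|$ for all $w\in K^*$, since $w^\top(e-\varepsilon w/\|w\|)\ge 0$.

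With this bound, the three properties follow in turn.
\begin{itemize}
\item \emph{$0\notin C$.} Any $w\in C$ has $w^\top e=1\neq 0$, so $w\neq 0$ and $C\subset K^*\setminus\{0\}$.
\item \emph{Closedness.} $C$ is the intersection of the closed cone $K^*$ (an intersection of closed half-spaces) with the closed hyperplane $\{w: w^\top e=1\}$.
\item \emph{Boundedness.} For $w\in C$ the bound gives $\|w\|\le w^\top e/\varepsilon=1/\varepsilon$.
\end{itemize}
Hence $C$ is compact.

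For the generating property, note that $C$ is convex as the intersection of two convex sets, so $\mathrm{conv}(C)=C$ and it suffices to show $K^*=\mathrm{cone}(C)=\{tw: t\ge 0,\ w\in C\}$. The inclusion $\mathrm{cone}(C)\subseteq K^*$ holds because $K^*$ is a cone containing $C$. Conversely, $0=0\cdot w$ for any $w\in C$; $C$ is nonempty because $K^*\ne\{0\}$, as $K$ is pointed and closed, hence $K\ne\mathbb{R}^m$. For $w\in K^*\setminus\{0\}$, set $t=w^\top e>0$. Then $w/t\in C$ and $w=t\,(w/t)$.

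The only step requiring care is the strict positivity and norm bound derived from $e\in\mathrm{int}(K)$. The rest is routine.
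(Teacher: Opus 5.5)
Your proof is correct and complete. The paper gives no argument of its own here; it imports the lemma from the cited work. Your route is the standard one: the uniform estimate $w^\top e\ge\varepsilon\|w\|$ on $K^*$, which comes from a ball $e+\varepsilon B\subset K$, gives $0\notin C$ and the bound $\|w\|\le 1/\varepsilon$ on $C$. It also justifies the normalization $w\mapsto w/(w^\top e)$, which shows $K^*=\mathrm{cone}(C)=\mathrm{cone}(\mathrm{conv}(C))$. Your use of pointedness together with $K^{**}=K$ correctly rules out the degenerate case $C=\emptyset$.

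The quantitative form of your bound is worth keeping. It shows that $\phi$ is Lipschitz with constant $\max_{w\in C}\|w\|\le 1/\varepsilon$, and this can exceed $1$. For $K_3$ with $e=(0,0,1)^\top$ in Example~\ref{exm_1}, one has $\varepsilon=1/\sqrt{91}$ and $\max_{w\in C}\|w\|=\sqrt{91}$, so your bound is attained there. Consequently, for this choice of $C$, part (iii) of Lemma~\ref{scal_func} holds only up to that constant, not with constant $1$.
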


\bigskip

In the following definition, we present the concepts of a minimal set concerning the ordering \( \preceq_{K} \).\\

\begin{definition}\emph{\cite{steepest2021set_optimization}} Given a set $B\in \mathscr{P}(\mathbb{R}^{m})$ and the cone $K$. Then, corresponding to the cone $K$, the following set
\begin{equation}\label{minimal_element}
  \textnormal{Min}(B,K) := \left\{ \Tilde{y} \in B \mid   K \cap(\Tilde{y}-K) =\{\Tilde{y}\} \right \}  
\end{equation}
is said to be the set of minimal elements of \( B \).\\

\end{definition}

We have a relationship between a set and its minimal elements, frequently employed in this work. This relationship is outlined in the following proposition.\\

\begin{proposition}\emph{\cite{Jahn}}\label{mini_k}
    Given a compact set $B\in \mathscr{P}(\mathbb{R}^{m})$ and the cone $K$. Then,
\[
K+\emph{\text{Min}}(B,K)=K+B.
\]  
\end{proposition}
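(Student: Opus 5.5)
Since $\mathrm{Min}(B,K)\subseteq B$, the inclusion $K+\mathrm{Min}(B,K)\subseteq K+B$ is immediate. The whole plan is therefore to prove the reverse inclusion: for each $b\in B$ I will find some $\bar y\in \mathrm{Min}(B,K)$ with $b\in \bar y+K$. Adding $K$ to both sides, and using $K+K=K$ because $K$ is a convex cone, then gives $K+B\subseteq K+\mathrm{Min}(B,K)$.

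I read the minimality condition in the definition in its standard sense, namely $B\cap(\tilde y-K)=\{\tilde y\}$. This means that no other point of $B$ dominates $\tilde y$.

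To produce $\bar y$, I intend to use a strictly positive linear functional. Since $K$ is closed, convex and pointed, its dual cone $K^*$ has nonempty interior. Any $w\in\mathrm{int}(K^*)$ satisfies $w^\top k>0$ for all $k\in K\setminus\{0\}$. Fix such a $w$.

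For a given $b\in B$, consider the section $B_b:=B\cap(b-K)$. It is nonempty because it contains $b$, and compact as the intersection of a compact set with a closed set. The continuous function $y\mapsto w^\top y$ therefore attains its minimum on $B_b$ at some point $\bar y$. Since $\bar y\in b-K$, we get $b\in\bar y+K$.

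Next I check that $\bar y$ is minimal. Let $y\in B\cap(\bar y-K)$. Then
\[
y\in \bar y-K\subseteq (b-K)-K=b-K,
\]
so $y\in B_b$, and hence $w^\top y\ge w^\top \bar y$. On the other hand, $\bar y-y\in K$. If $y\neq\bar y$, strict positivity of $w$ on $K\setminus\{0\}$ gives $w^\top(\bar y-y)>0$, i.e. $w^\top y<w^\top\bar y$, a contradiction. Hence $y=\bar y$, and $\bar y\in\mathrm{Min}(B,K)$, which finishes the reverse inclusion.

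The main obstacle I expect is justifying the existence of the strictly positive functional $w$, since this is the only place where pointedness of $K$ enters. I would argue as follows. If $\mathrm{int}(K^*)$ were empty, then $K^*$ would lie in a hyperplane $\{z: v^\top z=0\}$ with $v\neq0$. Then $\pm v\in K^{**}=K$, contradicting $K\cap(-K)=\{0\}$.

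Next I must show that every interior point of $K^*$ is strictly positive on $K\setminus\{0\}$. Take $w\in\mathrm{int}(K^*)$ and $k\in K\setminus\{0\}$. For small $\varepsilon>0$ we have $w-\varepsilon k\in K^*$, so $(w-\varepsilon k)^\top k\ge 0$. This gives $w^\top k\ge\varepsilon\|k\|^2>0$.

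Alternatively, if one prefers to avoid this step, the same compactness argument works with Zorn's lemma applied to the partial order $\preceq_K$ on $B_b$. Closedness of $K$ and compactness of $B$ ensure that chains have lower bounds. I regard this as a fallback only.
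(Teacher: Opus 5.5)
Your proof is correct and complete. The paper itself gives no proof of this proposition; it simply quotes it from Jahn. So there is no in-paper argument to compare against, and the only point of contact is the definition of $\mathrm{Min}(B,K)$. There you rightly read the printed condition $K\cap(\tilde y-K)=\{\tilde y\}$ as the intended $B\cap(\tilde y-K)=\{\tilde y\}$.

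Your route is the scalarization proof of the domination property. You minimize a linear functional $w\in\mathrm{int}(K^*)$ over the compact section $B\cap(b-K)$. You then transfer minimality from the section to all of $B$ using $(b-K)-K=b-K$. All the supporting steps are sound:
\begin{itemize}
\item nonemptiness of $\mathrm{int}(K^*)$ follows from pointedness via $K^{**}=K$;
\item the hyperplane containing $K^*$ can be taken through the origin because $0\in K^*$;
\item strict positivity of $w$ on $K\setminus\{0\}$ follows from $w-\varepsilon k\in K^*$;
\item the reverse inclusion follows from $K+K=K$.
\end{itemize}

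Compare this with the Zorn's-lemma argument you list as a fallback, which is the usual textbook way of getting minimal elements in compact sections. Your proof is shorter and constructive: the minimal element dominating $b$ is explicitly a minimizer of $w^\top(\cdot)$ on the section. However, it genuinely relies on pointedness plus finite dimensionality to produce a strictly positive functional. The Zorn route needs only closedness of $K$ and compactness of the section, with pointedness entering only through antisymmetry of $\preceq_K$. It therefore extends to general topological vector spaces, where strictly positive functionals may not exist. In the finite-dimensional setting of this paper, yours is the cleaner choice.
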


We use the following Drummond-Svaiter functional $\phi : \mathbb{R}^m \to \mathbb{R}$ for the purpose of scalarizing vectors:
\begin{equation}\label{Dr_sc_fun}
  \phi(y):=\sup\left\{w^{\top}y ~\mid~w\in C\right\}.  
\end{equation}
Note from Lemma \ref{com_of_C} that $C$ is compact. Therefore, the functional $\phi$ is well-defined. This scalarizing functional plays an essential role in further analysis of this work. Some useful properties of the functional \( \phi \) are provided in the following lemma.\\

\begin{lemma}\emph{\cite{drummond2005steepest}}\label{scal_func}
    Let \( y \) and \( \Tilde{y} \) belong to \( \mathbb{R}^m \). Then, we have the following results: 
    \begin{enumerate}[(i)]
        \item $\phi(y+\Tilde{y})\leq \phi(y)+\phi(\Tilde{y}) $ and $\phi(y)-\phi(\Tilde{y})\leq \phi(y-\Tilde{y})$.
        \item If $y\preceq_K \Tilde{y}$, then $\phi(y)\leq \phi(\Tilde{y})$, and if $y\prec_K \Tilde{y}$, then $\phi(y)< \phi(\Tilde{y})$.
        \item $\lvert\phi(y)-\phi(\Tilde{y})\rvert\leq \|y-\Tilde{y}\|$.
        \item $-K=\left\{\Tilde{y}\in \mathbb{R}^{m} ~\middle|~ \phi(\Tilde{y})\leq 0\right\} \text{ and } -\mathrm{int}(K)=\left\{\Tilde{y}\in \mathbb{R}^{m} ~\middle|~ \phi(\Tilde{y})< 0\right\}. $
    \end{enumerate}
\end{lemma}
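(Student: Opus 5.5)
The plan is to derive all four parts from the definition $\phi(y)=\sup\{w^\top y \mid w\in C\}$ in \eqref{Dr_sc_fun}. We use two facts about $C$ from Lemma \ref{com_of_C}: $C$ is compact and nonempty, so the supremum is a maximum, and $K^*=\mathrm{cone}(\mathrm{conv}(C))$, so every $w\in K^*\setminus\{0\}$ is a positive multiple of an element of $\mathrm{conv}(C)$.

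For (i), sublinearity is immediate: $w^\top(y+\tilde y)=w^\top y+w^\top\tilde y\le \phi(y)+\phi(\tilde y)$ for each $w\in C$, and we then take the supremum over $w$. The second inequality follows by writing $y=(y-\tilde y)+\tilde y$ and applying the first. For (ii), if $\tilde y-y\in K$ then $w^\top(\tilde y-y)\ge 0$ for all $w\in C\subset K^*$, so $w^\top y\le w^\top\tilde y\le\phi(\tilde y)$, and taking the supremum gives $\phi(y)\le\phi(\tilde y)$. For the strict version, let $d=\tilde y-y\in\mathrm{int}(K)$. Since $0\notin C$ and $d$ is interior, $w^\top d>0$ for every $w\in C$. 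Compactness of $C$ and continuity then give $\delta:=\min_{w\in C}w^\top d>0$. Hence $w^\top y\le w^\top\tilde y-\delta$, which yields $\phi(y)\le\phi(\tilde y)-\delta<\phi(\tilde y)$.

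For (iii), we use (i): $\phi(y)-\phi(\tilde y)\le\phi(y-\tilde y)\le \max_{w\in C}\|w\|\,\|y-\tilde y\|$ by Cauchy–Schwarz, and by symmetry the same bound holds for $\phi(\tilde y)-\phi(y)$. This gives a Lipschitz estimate with constant $\max_{w\in C}\|w\|$. The stated constant $1$ requires $\|w\|\le 1$ on $C$, which holds under the normalization of \cite{drummond2005steepest} (for instance with $\|e\|=1$ and $C$ contained in the unit ball). I would record this as the normalization being used, since otherwise the constant must be replaced by $\max_{w\in C}\|w\|$. This is the step I expect to be the main obstacle: for a general $e$ the literal constant $1$ can fail, so the proof has to state the normalization explicitly.

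For (iv), we combine the dual descriptions of $-K$ and $-\mathrm{int}(K)$ recalled before Lemma \ref{com_of_C} with the generator property. If $\phi(\tilde y)\le 0$, then $w^\top\tilde y\le 0$ for all $w\in C$. By convex combinations and positive scaling this extends to all of $K^*$, so $\tilde y\in -K$ because $K^{**}=K$. Conversely, $\tilde y\in-K$ gives $w^\top\tilde y\le 0$ on $C\subset K^*$, hence $\phi(\tilde y)\le 0$. For the interior, $\phi(\tilde y)<0$ means every $w\in C$ satisfies $w^\top\tilde y<0$, and therefore so does every $w\in K^*\setminus\{0\}$. Then $\tilde y\in-\mathrm{int}(K)$: for small perturbations $u$, compactness of $C$ keeps $\max_{w\in C}w^\top(\tilde y+u)<0$, so a ball around $\tilde y$ lies in $-K$ by the first part. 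The converse, $\tilde y\in-\mathrm{int}(K)\Rightarrow\phi(\tilde y)<0$, is exactly the strict case of (ii) applied with $y=\tilde y$ and $0$ in place of $\tilde y$, noting $\phi(0)=0$.
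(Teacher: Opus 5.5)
The paper gives no proof of this lemma; it quotes it from \cite{drummond2005steepest}, so there is no in-paper argument to compare with. Your direct verification of (i), (ii) and (iv) is the standard one and is correct. Sublinearity and monotonicity follow directly from $C\subset K^*$. The strict case of (ii) correctly uses compactness of $C$ to get a uniform $\delta=\min_{w\in C}w^\top d>0$. In (iv) you prove the interior claim by a perturbation argument plus the strict case of (ii), which is the right choice. The displayed description of $-\mathrm{int}(K)$ in the preliminaries should not be relied on: as printed it has $\le 0$ where $<0$ is needed, so it actually describes $-K$.

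On (iii) your suspicion is justified. This is a defect of the statement, not of your argument. With $C$ as in \eqref{gen_polar_cone}, the correct Lipschitz constant is $\max_{w\in C}\|w\|$, and the paper's own settings violate the constant $1$:
\begin{itemize}
\item For $K_3$ with $e=(0,0,1)^\top$, \eqref{ex_1_phi} gives $\phi((1,0,0)^\top)=\sqrt{90}>1$.
\item For $K_2$ with $e=(1,1)^\top$, $C$ is the segment joining $(-\tfrac12,\tfrac32)^\top$ and $(\tfrac32,-\tfrac12)^\top$, so $\phi((0,1)^\top)=\tfrac32$.
\end{itemize}

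Your parenthetical normalization does not work, however. Every $w\in C$ satisfies $1=w^\top e\le\|w\|\,\|e\|$. So $\|e\|=1$ together with $C$ lying in the unit ball forces $C=\{e\}$. Then $K^*$ is a ray and $K$ a half-space, which is not pointed when $m\ge 2$.

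What does work is rescaling $e$. Replacing $e$ by $te$ with $t\ge\max_{w\in C}\|w\|$ replaces $C$ by $C/t$ and $\phi$ by $\phi/t$. It keeps $\phi(\pm e)=\pm 1$ for the new $e$, which Theorem \ref{Exist_ST_L} and the Zoutendijk argument use, and yields constant $1$. Alternatively, state (iii) with the constant $\max_{w\in C}\|w\|$. In this paper (iii) is invoked only in the Zoutendijk proof, where that constant merely multiplies $L$ in \eqref{ZAT1}, so nothing downstream breaks.
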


\bigskip


     

Before discussing the optimality conditions of the $\mathcal{SOP}_{\ell}$ \eqref{SPL}, we need to introduce some necessary tools---a few index-related set-valued maps.\\

\begin{definition}\emph{\cite{steepest2021set_optimization}}{~}Let $\Tilde{x}\in \mathbb{R}^n$.
    \begin{enumerate}[(i)]
        \item The active index set at $\Tilde{x}$ of $\mathcal{SOP}_{\ell}$ \eqref{SPL} is given by 
        \[
         I(\Tilde{x}) := \{ j \in [p] \mid f^j(\Tilde{x}) \in \textnormal{Min}(F(\Tilde{x}), K) \}.\]

         \item For a vector \( \Tilde{y} \in \mathbb{R}^m \), 
          \[
        I_{\Tilde{y}}(\Tilde{x}) := \{ j \in I(\Tilde{x}) \mid f^j(\Tilde{x}) = \Tilde{y} \}.
        \]     
    \end{enumerate}
\end{definition}

It is to be noted that, for all \( x \in \mathbb{R}^n \), we have
\[
I(x) = \bigcup_{\Tilde{y} \in \text{Min}(F(x), K)} I_{\Tilde{y}}(x) 
\]
and \( I_{\Tilde{y}}(x) = \emptyset \) whenever \( \Tilde{y} \notin \text{Min}(F(x), K) \).\\

\begin{definition}\emph{\cite{steepest2021set_optimization}}
    The map \( \omega: \mathbb{R}^n \to \mathbb{R} \) is defined as the cardinality of the set of minimal elements of \( F \), i.e., 
    \begin{equation}\label{card_fun}
    \omega(x) := |\textnormal{Min}(F(x), K)|.    
    \end{equation}
Here, \( |\cdot| \) represents the cardinality of a set.
\end{definition}
\bigskip
\begin{definition}\emph{\cite{steepest2021set_optimization}}
   At a given point $x \in \mathbb{R}^n$, let the minimal set $\mathrm{Min}(F(x), K)$ have the enumeration
$\{v_1^{x}, v_2^{x}, \dots, v_{\omega(x)}^{x}\}$,
where $\omega(x)$ denotes the number of minimal elements at $x$. Then, for the point \( x \), the partition set is defined by \begin{equation}\label{partition_set}
      P_x := \prod_{j=1}^{\omega(x)}{I_{v^{x}_{j}}(x)}.  
    \end{equation}
\end{definition}

In the following, we discuss the concept of stationarity of the $\mathcal{SOP}_{\ell}$ \eqref{SPL}.\\
\begin{definition}\emph{\cite{kumar2024nonlinear}}\label{Stationary_point}
    Let $\bar{x}\in \mathbb{R}^{n}$. We say that $\bar{x}$ is a stationary point of the $\mathcal{SOP}_{\ell}$ \eqref{SPL}, if for each $(a_1,a_2,\ldots,a_{\omega(\bar{x})})=:a\in P_{\bar{x}}$ and $d\in \mathbb{R}^{n}$, there is a $j\in [\omega(\bar{x})]$ for which $\nabla f^{a_{j}}(\bar{x})^{\top}d\notin -\mathrm{int}(K)$.
\end{definition}
\bigskip
Under the consideration of $\mathcal{SOP}_{\ell}$ \eqref{SPL}, here, we use the stationary condition as a criterion for optimality. The reasoning behind this choice is that identifying a weakly minimal point is significantly challenging. However, stationary points are weaker than weakly minimal points as every weakly minimal point is also a stationary point (see \cite[Proposition 3.1]{steepest2021set_optimization}). Thus, although our primary objective is to find a weakly minimal point of the $\mathcal{SOP}_{\ell}$ \eqref{SPL}, we ultimately focus on identifying a stationary point.\\

It is worth noting that identifying a stationary point, as defined in Definition \ref{Stationary_point}, can be quite challenging. Therefore, we introduce some functions that facilitate the identification of stationary points more easily and play a crucial role in further analysis.\\

For all $x\in \mathbb{R}^{n}$, the function $\mathcal{V}_{x}: P_{x}\times \mathbb{R}^{n}\rightarrow \mathbb{R}$ is defined as follows:
\begin{equation}\label{Vxd}
    \mathcal{V}_{x}(a,d):=\underset{j\in [\omega(x)]}{\max}\phi(\nabla f^{a_{j}}(x)^{\top}d)+\tfrac{1}{2}\|d\|^{2},
\end{equation}
where $a\in P_{x}$ and $d\in \mathbb{R}^{n}$. 
Due to the strong convexity of the function $d \mapsto \mathcal{V}_x(a,d)$ for every $x \in \mathbb{R}^n$ and $a \in P_x$,
the minimiser with respect to $d$ is unique over $\mathbb{R}^n$.
Since the partition set $P_x$ is finite, the minimum of $\mathcal{V}_x(a,d)$ over $P_x \times \mathbb{R}^n$ is guaranteed to exist.
Therefore, the scalar-valued function $\varphi : \mathbb{R}^n \to \mathbb{R}$, defined by
\begin{equation}\label{varphi}
  \varphi(x) := \min_{(a,d) \in P_x \times \mathbb{R}^n} \mathcal{V}_x(a,d),
\end{equation}
is well-defined.
\\ 

To simplify the presentation, the following notations will be used throughout:
\begin{framed}
\noindent Consider two types of points \( x \in \mathbb{R}^{n} \)---a reference point \( \bar{x} \) and an iterative point \( x_k \). Then, the following conventions are adopted:
\begin{itemize}
    \item The cardinality function value \( \omega(x) \) corresponding to \( \bar{x} \) and \( x_k \) is denoted by \( \bar{\omega} \) and \( \omega_k \), respectively.

    \item The minimization problem \( \underset{P_{x}\times \mathbb{R}^{n}}{\min}\varphi (x) \) yields solutions denoted by \( (\bar{a}, \bar{u}) \) for \( \bar{x} \) and by \( (a_k, u_k) \) for \( x_k \). Specifically, we have \( \mathcal{V}_{\bar{x}}(\bar{a}, \bar{u}) = \varphi(\bar{x}) \) and \( \mathcal{V}_{x_k}(a_k, u_k) = \varphi(x_k) \).

\end{itemize}   
\end{framed}

\bigskip

With the help of the functions defined in \eqref{Vxd} and \eqref{varphi}, an alternative tool for identifying a stationary point of the $\mathcal{SOP}_{\ell}$ \eqref{SPL}, based on Definition \ref{Stationary_point}, is provided below as a necessary and sufficient condition for stationarity.\\

\begin{proposition}\emph{\cite{kumar2024nonlinear}}
    Let $\bar{x}\in 
    \mathbb{R}^{n}$. Then, the following results are true:
    \begin{enumerate}[(i)]
        \item If $\bar{x}$ is a stationary point of the $\mathcal{SOP}_{\ell}$ \eqref{SPL}, $\varphi(\bar{x})=0$ and $\bar{u}=0$.
        \item If $\bar{x}$ is a nonstationary point of the $\mathcal{SOP}_{\ell}$ \eqref{SPL}, $\varphi(\bar{x})< 0$ and $\bar{u}\neq 0$.
    \end{enumerate}
    
\end{proposition}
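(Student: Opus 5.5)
We obtain both parts from the structure of $\mathcal{V}_{\bar{x}}$. The tools are the trivial choice $d=0$, positive homogeneity of $\phi$, and Lemma \ref{scal_func}(iv).

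\textbf{Upper bound.} For every $a\in P_{\bar{x}}$ we have
\[
\mathcal{V}_{\bar{x}}(a,0)=\max_{j}\phi(0)+0=0,
\]
since $\phi(0)=\sup_{w\in C}w^\top 0=0$. Hence $\varphi(\bar{x})\le 0$ for every $\bar{x}$.

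\textbf{Part (ii).} Let $\bar{x}$ be nonstationary. Negating Definition \ref{Stationary_point}, there exist $a\in P_{\bar{x}}$ and $d\in\mathbb{R}^n$ with $\nabla f^{a_j}(\bar{x})^\top d\in-\mathrm{int}(K)$ for all $j\in[\bar{\omega}]$. By Lemma \ref{scal_func}(iv), this means $\phi(\nabla f^{a_j}(\bar{x})^\top d)<0$ for every $j$. Set $\beta:=\max_j\phi(\nabla f^{a_j}(\bar{x})^\top d)<0$.

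The map $d\mapsto\nabla f^{a_j}(\bar{x})^\top d$ is linear, and $\phi$ is positively homogeneous because it is a supremum of linear functionals. Hence for $t>0$,
\[
\mathcal{V}_{\bar{x}}(a,td)=t\beta+\tfrac{t^2}{2}\|d\|^2,
\]
which is negative for small $t>0$ (note $d\neq0$, since $\beta<0$). Therefore $\varphi(\bar{x})<0$. Since $\mathcal{V}_{\bar{x}}(\bar{a},0)=0$, the minimizer cannot be zero, so $\bar{u}\neq0$.

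\textbf{Part (i).} Let $\bar{x}$ be stationary. Then for every $a\in P_{\bar{x}}$ and every $d$, some $j$ satisfies $\nabla f^{a_j}(\bar{x})^\top d\notin-\mathrm{int}(K)$. By Lemma \ref{scal_func}(iv) this gives $\phi(\nabla f^{a_j}(\bar{x})^\top d)\ge0$, so
\[
\mathcal{V}_{\bar{x}}(a,d)\ge\tfrac12\|d\|^2\ge0.
\]
Combined with the upper bound, $\varphi(\bar{x})=0$. Moreover, $0=\mathcal{V}_{\bar{x}}(\bar{a},\bar{u})\ge\tfrac12\|\bar{u}\|^2$, which forces $\bar{u}=0$. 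This does not even need the uniqueness coming from strong convexity.

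\textbf{Remaining check.} The only mildly delicate point is the use of Lemma \ref{scal_func}(iv) in the form $y\notin-\mathrm{int}(K)\iff\phi(y)\ge0$, which is just the contrapositive of the stated characterization. The homogeneity $\phi(ty)=t\phi(y)$ for $t>0$ follows immediately from the definition \eqref{Dr_sc_fun}, with $C$ compact by Lemma \ref{com_of_C}. Everything else is direct.
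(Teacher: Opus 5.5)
Your proof is correct and complete. The three ingredients do all the work: the evaluation $\mathcal{V}_{\bar{x}}(a,0)=0$, the bound $\mathcal{V}_{\bar{x}}(a,d)\ge\tfrac12\|d\|^2$ at stationary points via Lemma~\ref{scal_func}(iv), and the scaling $\mathcal{V}_{\bar{x}}(a,td)=t\beta+\tfrac{t^2}{2}\|d\|^2$ along a common $-\mathrm{int}(K)$ direction at nonstationary points. The argument applies to every minimizer $(\bar a,\bar u)$, not only a distinguished one.

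The paper gives no proof of this proposition and simply imports it from Kumar et al. Your direct argument is the standard one behind that result, which goes back to Drummond and Svaiter's vector-valued case.
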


\bigskip

Next, we present the concept of $K$-descent direction for the set-valued objective map $F$ of the $\mathcal{SOP}_{\ell}$ \eqref{SPL}.\\

\begin{definition}\emph{\cite{kumar2024nonlinear}}
    Let \( \bar{x} \in \mathbb{R}^n \) be a nonstationary point of the $\mathcal{SOP}_{\ell}$ \eqref{SPL}. We say that \( \bar{d} \in \mathbb{R}^n \) is a \( K \)-descent direction at \( \bar{x} \) of the $\mathcal{SOP}_{\ell}$ \eqref{SPL} if the following condition holds:
\[
\underset{j \in [\omega(\bar{x})]}{\max}  \phi \left( \nabla f^{\bar{a}_j}(\bar{x})^{\top} \bar{d} \right)  < 0.
\]
Moreover, \( \bar{d} \) is said to satisfy the sufficient descent condition at \( \bar{x} \) if there exists \( c > 0 \) such that the following relation holds:
\begin{equation}\label{S_d_c}
   \underset{j \in [\omega(\bar{x})]}{\max}  \phi \left( \nabla f^{\bar{a}_j}(\bar{x})^{\top} \bar{d} \right)  < c \underset{j \in [\omega(\bar{x})]}{\max} \left\{ \phi \left( \nabla f^{\bar{a}_j}(\bar{x})^{\top} \bar{u} \right) \right\}. 
\end{equation}
\end{definition}


We conclude this section with the following lemma that will be useful later in this paper.\\

\begin{lemma}\emph{\cite{extension2020Hager-Zhang}}
    Let $z$ and $\bar{z}$ be two unit vectors and $\delta>0$. Then, the following relation is true: 
    \begin{equation}\label{Ine_lemma}
        \|z-\bar{z}\|\leq 2\|z-\delta\bar{z}\|.
    \end{equation}
\end{lemma}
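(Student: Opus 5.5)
The claim is that for unit vectors $z,\bar z$ and any $\delta>0$ we have $\|z-\bar z\|\le 2\|z-\delta\bar z\|$. The plan is to compare $z-\bar z$ with $z-\delta\bar z$ through the triangle inequality, inserting $\delta\bar z$ as an intermediate point. This gives
\[
\|z-\bar z\|\le \|z-\delta\bar z\|+\|\delta\bar z-\bar z\| = \|z-\delta\bar z\| + |\delta-1|,
\]
where the equality uses $\|\bar z\|=1$. It therefore suffices to prove the scalar bound $|\delta-1|\le \|z-\delta\bar z\|$.

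For that bound I would apply the reverse triangle inequality to $z$ and $\delta\bar z$. Since $\|z\|=1$ and $\|\delta\bar z\|=\delta$ (here $\delta>0$ and $\bar z$ is a unit vector), we get
\[
\|z-\delta\bar z\|\ \ge\ \bigl|\,\|z\|-\|\delta\bar z\|\,\bigr| = |1-\delta|.
\]
Substituting this into the first display yields $\|z-\bar z\|\le 2\|z-\delta\bar z\|$, which is \eqref{Ine_lemma}.

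There is no real obstacle here. The only point needing care is that both norm identities use the hypotheses exactly: $\|\delta\bar z-\bar z\|=|\delta-1|$ and $\|\delta\bar z\|=\delta$ both rely on $\|\bar z\|=1$ together with $\delta>0$, and $\|z\|=1$ is what makes the reverse triangle inequality give $|1-\delta|$. I would state these explicitly so the two-line argument is self-contained. The bound holds for every $\delta>0$, so no case split between $\delta\ge 1$ and $\delta<1$ is needed.
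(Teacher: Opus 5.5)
Your argument is correct and complete. The paper gives no proof of this lemma; it only cites \cite{extension2020Hager-Zhang}, so there is no internal proof to match.

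The classical argument for this inequality, going back to Gilbert and Nocedal and reused in the Hager--Zhang literature, takes a different route. For unit vectors one has $\|z-\delta\bar z\|^2 = 1-2\delta z^{\top}\bar z+\delta^2 = \|\delta z-\bar z\|^2$. Combined with $(1+\delta)(z-\bar z) = (z-\delta\bar z)+(\delta z-\bar z)$, this gives $(1+\delta)\|z-\bar z\|\le 2\|z-\delta\bar z\|$.
\begin{itemize}
\item The symmetric route yields the sharper factor $2/(1+\delta)$. It depends on the inner-product identity, which fails for general norms.
\item Your route uses only the triangle inequality, the reverse triangle inequality and homogeneity. It therefore holds verbatim for unit vectors in any norm, but gives only the constant $2$. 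That constant is all the paper needs.
\end{itemize}

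Your proof actually needs only $\delta\ge 0$, not $\delta>0$. The identity $\|\delta\bar z-\bar z\|=|\delta-1|$ needs no sign condition, and $\|\delta\bar z\|=\delta$ needs only $\delta\ge 0$. This matters for the paper. In Lemma \ref{Lemma_dir_bdd}, \eqref{Ine_lemma} is applied with $\delta=q_k=\beta_k\|d_{k-1}\|/\|d_k\|$. This is zero whenever $\beta_k=\max\{0,\beta^{\mathrm{HZ}}_k\}=0$, despite the paper's claim that $q_k>0$, and your argument covers that case without change.

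Some sign restriction is genuinely needed. Taking $\bar z=-z$ and $\delta=-1$ gives $\|z-\bar z\|=2$ but $\|z-\delta\bar z\|=0$.
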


\section{Wolfe line searches and Hager-Zhang method}\label{sec_3}

In \cite{kumar2024nonlinear}, the Wolfe line searches conditions are derived for the $\mathcal{SOP}_{\ell}$ \eqref{SPL}, using the Drummond-Svaiter scalarization function \(\phi\) as given in \eqref{Dr_sc_fun}. Subsequently, the existence of a suitable step length has been established at a nonstationary point in a \(K\)-descent direction, assuming that the generator set of $K^{*}$ is finite. In order to extend this result to the case where \(K\) can be infinitely generated, we slightly modify these conditions as follows. \\

\begin{definition}
    Consider the $\mathcal{SOP}_{\ell}$ \eqref{SPL}. Let $\bar{d}$ be a $K$-descent direction at $\bar{x}$. Assume $e \in \mathrm{int}(K)$ and $0<\rho<\sigma<1$. Then, the standard Wolfe conditions are defined by 
\begin{subequations}\label{Arm with stan. wol.}
    \begin{align}
       & F(\bar{x}+\alpha \bar{d}\,)\preceq^{{\ell}}\left\{f^{\bar{a}_{j}}(\bar{x})+\rho\, \alpha \,\underset{j\in [\bar{\omega}]}{\max}\,\phi\left(\nabla f^{\bar{a}_{j}}(\bar{x})^{\top}\bar{d}\,\right) e\right\}_{j\in [\bar{\omega}]}\prec^{{\ell}}F(\bar{x}) \label{Arm in Stand.} \\
       \noalign{\noindent \text{and}}
       & \underset{j\in [\bar{\omega}]}{\max}\,\phi\left(\nabla f^{\bar{a}_{j}}(\bar{x}+\alpha\bar{d}\,)^{\top}\bar{d}\,\right)  \geq \sigma\,\underset{j\in [\bar{\omega}]}{\max}\,\phi\left(\nabla f^{\bar{a}_{j}}(\bar{x})^{\top}\bar{d}\,\right).\label{stan, wol.}
    \end{align}
\end{subequations}
Similarly, the strong Wolfe conditions are given by 
\begin{subequations}\label{Arm with strong. wol.}
    \begin{align}
       & F(\bar{x}+\alpha \bar{d}\,)\preceq^{{\ell}}\left\{f^{\bar{a}_{j}}(\bar{x})+\rho\, \alpha \,\underset{j\in [\bar{\omega}]}{\max}\,\phi\left(\nabla f^{\bar{a}_{j}}(\bar{x})^{\top}\bar{d}\,\right) e\right\}_{j\in [\bar{\omega}]}\prec^{{\ell}}F(\bar{x})  \label{Armi_con}  \\
       \noalign{\noindent \text{and}}
       & \left\lvert \underset{j\in [\bar{\omega}]}{\max}\,\phi\left(\nabla f^{\bar{a}_{j}}(\bar{x}+\alpha\bar{d}\,)^{\top}\bar{d}\,\right)\right\rvert  \leq \sigma\,\left \lvert \underset{j\in [\bar{\omega}]}{\max}\,\phi\left(\nabla f^{\bar{a}_{j}}(\bar{x})^{\top}\bar{d}\,\right)\right\rvert.\label{strong. wol.}
    \end{align}
\end{subequations}
\end{definition}

\bigskip

  In the following theorem, we present the existence of a step length from a nonstationary point in a $K$-descent direction satisfying the strong Wolfe conditions \eqref{Arm with strong. wol.} for the $\mathcal{SOP}_{\ell}$ \eqref{SPL}.\\

  \begin{theorem}\label{Exist_ST_L}
      Consider the $\mathcal{SOP}_{\ell}$ \eqref{SPL}. Let $\bar{x}$ be a nonstationary point and $\bar{d}$ be a $K$-descent direction at $\bar{x}$. Assume that there exists a bounded set $\mathcal{B}\in \mathscr{P}(\mathbb{R}^{m})$ such that $\mathcal{B}\preceq^{\ell}F(x)$ for all $x\in \mathbb{R}^{m}$, $e\in \mathrm{int}(K)$, $\sigma \in (0,1)$, and $\rho\in(\sigma,1)$. Then, there exists an interval \( I \subset \mathbb{R}_{++} \) for which each step size \( \alpha \in I \) fulfils the strong Wolfe conditions \eqref{Arm with strong. wol.}.
  \end{theorem}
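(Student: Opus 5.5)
The plan is to reduce both strong Wolfe conditions to statements about finitely many scalar functions of $\alpha$, and then use a mean-value argument followed by the intermediate value theorem. Write $\beta:=\max_{j\in[\bar\omega]}\phi(\nabla f^{\bar a_j}(\bar x)^\top\bar d)$, which is negative because $\bar d$ is a $K$-descent direction. Since $w^\top e=1$ for every $w\in C$, we have $\phi(y+te)=\phi(y)+t$ for all $t\in\mathbb{R}$.

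Two preliminary remarks concern the hypotheses. The standing convention of the definition is $0<\rho<\sigma<1$, and the argument needs exactly $\rho\beta>\sigma\beta$. So I will work with $\rho<\sigma$; the printed ``$\rho\in(\sigma,1)$'' should be read as this. Likewise, the bound $\mathcal B\preceq^\ell F(x)$ is needed for all $x\in\mathbb{R}^n$, not $\mathbb{R}^m$.

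\textbf{Step 1 (reformulating the Armijo part).} For $j\in[\bar\omega]$ set
\[
\psi_j(\alpha):=\phi\bigl(f^{\bar a_j}(\bar x+\alpha\bar d)-f^{\bar a_j}(\bar x)\bigr)-\rho\alpha\beta .
\]
The first relation in \eqref{Armi_con} requires each point $f^{\bar a_j}(\bar x)+\rho\alpha\beta e$ to lie in $F(\bar x+\alpha\bar d)+K$. A sufficient condition is $f^{\bar a_j}(\bar x+\alpha\bar d)\preceq_K f^{\bar a_j}(\bar x)+\rho\alpha\beta e$. By Lemma~\ref{scal_func}(iv) and the translation property of $\phi$, this is equivalent to $\psi_j(\alpha)\le 0$.

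The second relation $\prec^\ell F(\bar x)$ holds for every $\alpha>0$. Indeed, $\rho\alpha\beta e\in-\mathrm{int}(K)$, so each minimal element $f^{\bar a_j}(\bar x)=v_j^{\bar x}$ is strictly dominated. Every other point of $F(\bar x)$ lies in $\mathrm{Min}(F(\bar x),K)+K$ by Proposition~\ref{mini_k}, and $\mathrm{int}(K)+K\subseteq\mathrm{int}(K)$.

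\textbf{Step 2 (locating the first violation).} For small $\alpha>0$ we have
\[
\psi_j(\alpha)/\alpha=\phi\bigl((f^{\bar a_j}(\bar x+\alpha\bar d)-f^{\bar a_j}(\bar x))/\alpha\bigr)-\rho\beta .
\]
By Lemma~\ref{scal_func}(iii) this tends to $\phi(\nabla f^{\bar a_j}(\bar x)^\top\bar d)-\rho\beta\le(1-\rho)\beta<0$, so $\max_j\psi_j<0$ near $0^+$.

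For large $\alpha$, write $f^{\bar a_j}(\bar x+\alpha\bar d)=b+k$ with $b\in\mathcal B$ and $k\in K$. Then $\phi(b)\le\phi(b+k)+\phi(-k)\le\phi(b+k)$, so $\phi(f^{\bar a_j}(\bar x+\alpha\bar d))$ is bounded below by $-\sup_{b\in\mathcal B}\|b\|$. Since $-\rho\alpha\beta\to+\infty$, each $\psi_j(\alpha)\to+\infty$.

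By continuity there is a smallest $\alpha_0>0$ with $\max_j\psi_j(\alpha_0)=0$, attained at some index $j^*$. Consequently $\max_j\psi_j<0$ on $(0,\alpha_0)$.

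\textbf{Step 3 (nonsmooth mean value step — the main obstacle).} The function $\theta(\alpha):=\phi(f^{\bar a_{j^*}}(\bar x+\alpha\bar d)-f^{\bar a_{j^*}}(\bar x))$ is locally Lipschitz, hence absolutely continuous. Since $\theta(\alpha_0)-\theta(0)=\rho\alpha_0\beta$, the identity $\theta(\alpha_0)=\int_0^{\alpha_0}\theta'$ forces some $\alpha_1\in(0,\alpha_0)$ at which $\theta$ is differentiable with $\theta'(\alpha_1)\ge\rho\beta$.

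Next, $\phi$ is sublinear, so it is directionally differentiable with $\phi'(y;h)\le\phi(h)$. By the chain rule for directional derivatives,
\[
\theta'(\alpha_1)\le\phi\bigl(\nabla f^{\bar a_{j^*}}(\bar x+\alpha_1\bar d)^\top\bar d\bigr)\le g(\alpha_1),
\qquad g(\alpha):=\max_{j}\phi\bigl(\nabla f^{\bar a_j}(\bar x+\alpha\bar d)^\top\bar d\bigr).
\]
Hence $g(\alpha_1)\ge\rho\beta$. This is the step needing care: it replaces the classical mean value theorem, and this bound is what makes an infinitely generated $K$ manageable. If the integral argument feels heavy, I would instead use a Dini-derivative mean value inequality.

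\textbf{Step 4 (intermediate value and an interval).} The function $g$ is continuous, because $\nabla f^{\bar a_j}$ is continuous and $\phi$ is Lipschitz. We have $g(0)=\beta<\sigma\beta$ and $g(\alpha_1)\ge\rho\beta>\sigma\beta$.

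Fix a level $\gamma\in(\sigma\beta,\rho\beta)$; note $\gamma<0$. By the intermediate value theorem there is $\alpha_2\in(0,\alpha_1)$ with $g(\alpha_2)=\gamma$. Then $|g(\alpha_2)|=|\gamma|<\sigma|\beta|$ strictly, and $\max_j\psi_j(\alpha_2)<0$ strictly because $\alpha_2<\alpha_0$.

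By continuity of $g$ and of the $\psi_j$, both strict inequalities persist on an open interval $I\ni\alpha_2$ with $I\subset(0,\alpha_0)$. By Step 1, every $\alpha\in I$ satisfies \eqref{Armi_con}, and every such $\alpha$ also satisfies \eqref{strong. wol.}.
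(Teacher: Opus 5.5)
Your proof is correct, and it follows the paper's outline. Both show that the componentwise inequality \eqref{LT1} holds for small $\alpha>0$, and both use the lower bound $\mathcal B$ to find the first $\alpha$ where it becomes tight. Both then produce an interior point where $g\ge\rho\beta$ and finish with the intermediate value theorem and continuity. Your reading $\rho<\sigma$ is the intended one: the paper's own proof invokes $\sigma\in(\rho,1)$ in its last step.

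The real difference is your Step 3. The paper never differentiates $\phi$. At the tight point it fixes $\hat w\in C$ with $\hat w^\top\bigl(f^{\bar a_{j^*}}(\bar x+\alpha_0\bar d)-f^{\bar a_{j^*}}(\bar x)\bigr)=\rho\alpha_0\beta$. It obtains $\hat w$ as a limit of violating $w_k$, but compactness of $C$ gives it directly as a maximizer in the definition of $\phi$. The function $\alpha\mapsto\hat w^\top\bigl(f^{\bar a_{j^*}}(\bar x+\alpha\bar d)-f^{\bar a_{j^*}}(\bar x)\bigr)-\rho\alpha\beta$ is then $C^1$ and vanishes at $0$ and $\alpha_0$. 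Rolle's theorem gives $\tilde\alpha\in(0,\alpha_0)$ with $\hat w^\top\nabla f^{\bar a_{j^*}}(\bar x+\tilde\alpha\bar d)^\top\bar d=\rho\beta$. Since $\hat w\in C$, this immediately yields $g(\tilde\alpha)\ge\rho\beta$.

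So the step you flag as the main obstacle disappears once the supporting functional is frozen. Your route, via absolute continuity of $\theta$ and the bound $\phi'(y;h)\le\phi(h)$, is also valid, and it would still work if the supremum defining $\phi$ were not attained. Its cost is a.e.\ differentiability of Lipschitz functions, where the paper needs only the classical mean value theorem.

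In two places your write-up is more careful than the paper's:
\begin{itemize}
\item You justify why the Armijo inequality must eventually fail, via $\phi\bigl(f^{\bar a_j}(\bar x+\alpha\bar d)\bigr)\ge-\sup_{b\in\mathcal B}\|b\|$; the paper only asserts it.
\item You verify both sides of the absolute-value curvature condition; the paper records only the lower bound \eqref{LT3}.
\end{itemize}
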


  \begin{proof}
      We start by establishing the existence of an interval $\bar{I}:=[0,\bar{\alpha}], \bar{\alpha}>0,$ such that the relation \eqref{Armi_con} holds.\\ \\
      Assume that for all $\alpha \in \bar{I}$ the following is true
      \begin{equation}\label{LT1}
    f^{\bar{a}_{j}}(\bar{x}+\alpha \bar{d}\,)\preceq_{K} f^{\bar{a}_{j}}(\bar{x})+\rho\, \alpha \,\underset{j\in [\bar{\omega}]}{\max}\,\phi\left(\nabla f^{\bar{a}_{j}}(\bar{x})^{\top}\bar{d}\,\right) e \quad \text{for all } j\in [\bar{\omega}].
      \end{equation}  
      From Proposition \ref{mini_k} and the definition of a $K$-descent direction, it follows for all $\alpha\in \bar{I}$ that 
      \begin{align*}
          F(\bar{x}) ~&~ \subseteq  \left\{f^{\bar{a}_{j}}(\bar{x})\right\}_{j\in [\bar{\omega}]}+K\\
          ~&~\subseteq \left\{f^{\bar{a}_{j}}(\bar{x})+\rho\, \alpha \,\underset{j\in [\bar{\omega}]}{\max}\,\phi\left(\nabla f^{\bar{a}_{j}}(\bar{x})^{\top}\bar{d}\,\right) e\right\}_{j\in [\bar{\omega}]}+\mathrm{int}(K) +K\\
         ~&~= \left\{f^{\bar{a}_{j}}(\bar{x})+\rho\, \alpha \,\underset{j\in [\bar{\omega}]}{\max}\,\phi\left(\nabla f^{\bar{a}_{j}}(\bar{x})^{\top}\bar{d}\,\right) e\right\}_{j\in [\bar{\omega}]}+\mathrm{int}(K)
          \end{align*}
          Therefore, from the relation \eqref{LT1} and by the definition of minimal set, we have the following
          \begin{align*}
          F(\bar{x}) ~&~ \subseteq  \left\{f^{\bar{a}_{j}}(\bar{x}+\alpha \bar{d}\,)\right\}_{j\in [\bar{\omega}]}+\mathrm{int}(K)
          =  F(\bar{x}+\alpha \bar{d}\,)+\mathrm{int}(K).
          \end{align*}
          Thus, we get
          \[F(\bar{x}+\alpha \bar{d}\,)\preceq^{{\ell}}\left\{f^{\bar{a}_{j}}(\bar{x})+\rho\, \alpha \,\underset{j\in [\bar{\omega}]}{\max}\,\phi\left(\nabla f^{\bar{a}_{j}}(\bar{x})^{\top}\bar{d}\,\right) e\right\}_{j\in [\bar{\omega}]}\prec^{{\ell}}F(\bar{x})~ \text{ for all }\alpha \in \bar{I}.\]
Therefore, to establish the condition \eqref{Armi_con} for all $\alpha \in \bar{I}$, it suffices to show that the relation \eqref{LT1} is true for the same $\alpha$.\\ \\ 
If possible, assume that there exists an element $\bar{j}\in [\bar{\omega}]$ for which the relation \eqref{LT1} does not hold.
Thus, there exists a sequence $\{\alpha_{k}\} \subset \mathbb{R}_{++}$ such that $\alpha_k \rightarrow 0$ when $k\to \infty$ and
\[f^{\bar{a}_{\bar{j}}} (\bar{x}+\alpha_{k} \bar{d}\,)-f^{\bar{a}_{\bar{j}}}(\bar{x})-\rho\, \alpha_{k} ~\underset{j\in [\bar{\omega}]}{\max}\,\phi\left(\nabla f^{\bar{a}_{j}}(\bar{x})^{\top}\bar{d}\,\right)e\notin -K.\]
Accordingly, it follows that
\begin{align}
   ~&~ \nabla f^{\bar{a}_{\bar{j}}}(\bar{x})^{\top}\bar{d} -\rho ~\underset{j\in [\bar{\omega}]}{\max}\,\phi\left(\nabla f^{\bar{a}_{j}}(\bar{x})^{\top}\bar{d}\,\right) e\notin -K ~(\text{as }  k\rightarrow \infty) \nonumber\\
    \implies ~&~ \nabla f^{\bar{a}_{\bar{j}}}(\bar{x})^{\top}\bar{d} -\rho ~\underset{j\in [\bar{\omega}]}{\max}\,\phi\left(\nabla f^{\bar{a}_{j}}(\bar{x})^{\top}\bar{d}\,\right) e\notin - \mathrm{int}(K). \label{LT2}
\end{align}
From Lemma \ref{scal_func} (i) and the fact that $0<\rho<1$ and $\bar{d}$ is a $K$-descent direction, we have the following
\begin{align*}
    ~&~ \phi\left(\nabla f^{\bar{a}_{\bar{j}}}(\bar{x})^{\top}\bar{d} -\rho ~\underset{j\in [\bar{\omega}]}{\max}\,\phi\left(\nabla f^{\bar{a}_{j}}(\bar{x})^{\top}\bar{d}\,\right) e\right)
    \leq \phi\left(\nabla f^{\bar{a}_{\bar{j}}}(\bar{x})^{\top}\bar{d}\,\right) -\rho ~\underset{j\in [\bar{\omega}]}{\max}\,\phi\left(\nabla f^{\bar{a}_{j}}(\bar{x})^{\top}\bar{d}\,\right) \phi(e)  \\
    ~&~  \leq   (1-\rho) \underset{j\in [\bar{\omega}]}{\max}\,\phi\left(\nabla f^{\bar{a}_{j}}(\bar{x})^{\top}\bar{d}\,\right)<0
     \end{align*}
     because  $\phi(e)=1$. Therefore, we get
   \[\nabla f^{\bar{a}_{\bar{j}}}(\bar{x})^{\top}\bar{d} -\rho ~\underset{j\in [\bar{\omega}]}{\max}\,\phi\left(\nabla f^{\bar{a}_{j}}(\bar{x})^{\top}\bar{d}\,\right) e\in -\mathrm{int}(K),\] 
which is a contradiction to the relation \eqref{LT2}. Therefore, there exists a non-trivial interval $\bar{I}\subset \mathbb{R}_{+}$ such that the relation \eqref{Armi_con} holds.\\

We now demonstrate the existence of an interval $I\subseteq \bar{I}\neq \{0\}$ such that the condition \eqref{strong. wol.} is satisfied for all elements of I.\\

\noindent
Note that $F$ is bounded below by $\mathcal{B}$. Therefore, the relation \eqref{LT1} cannot hold for all $\alpha \in [0,\infty)$. Hence, a maximal step size \( \hat{\alpha} \geq \bar{\alpha} \) exists for which the relation \eqref{LT1} holds on the interval \( \hat{I} := [0, \hat{\alpha}] \) because $K$ is closed and the function $$\alpha \longmapsto ~f^{\bar{a}_{j}}(\bar{x}+\alpha_{k} \bar{d}\,)- f^{\bar{a}_{\hat{j}}}(\bar{x})-\rho\, \alpha_{k} ~\underset{j\in [\bar{\omega}]}{\max}\,\phi\left(\nabla f^{\bar{a}_{j}}(\bar{x})^{\top}\bar{d}\,\right)e$$ is continuous for all $j\in [\bar{\omega}]$.\\

\noindent
Since $[\bar{\omega}]$ is finite and $\hat{\alpha}$ is a maximal element of $\hat{I}$ for which the relation \eqref{LT1} can hold, therefore there exists a sequence $\{\alpha_{k}\}$, $\alpha_{k}\in \left(\hat{\alpha},\hat{\alpha}+\tfrac{1}{k}\right]$, and a $\hat{j}\in [\bar{\omega}]$ for which 
\[f^{\bar{a}_{\hat{j}}}(\bar{x}+\alpha_{k} \bar{d}\,)\npreceq_{K} f^{\bar{a}_{\hat{j}}}(\bar{x})+\rho\, \alpha_{k} ~\underset{j\in [\bar{\omega}]}{\max}\,\phi\left(\nabla f^{\bar{a}_{j}}(\bar{x})^{\top}\bar{d}\,\right) e \quad \text{for all }k.\]
Accordingly, corresponding to each $\alpha_{k}$, there exists $w_{k} \in C$ such that 
\[w^{\top}_{k}\left\{f^{\bar{a}_{\hat{j}}}(\bar{x}+\alpha_{k} \bar{d}\,)- f^{\bar{a}_{\hat{j}}}(\bar{x})-\rho\, \alpha_{k} ~\underset{j\in [\bar{\omega}]}{\max}\,\phi\left(\nabla f^{\bar{a}_{j}}(\bar{x})^{\top}\bar{d}\,\right) e\right\}>0 \quad \text{for all }k.\] 
Moreover, when $k\rightarrow \infty$, $\alpha_{k}\rightarrow \hat{\alpha}$. Therefore, there exists a subsequence $\{w_{n_k}\}$ of $\{w_{k}\}$ such that $w_{n_k}\rightarrow \hat{w} \in C$ because $C$ is compact. Consequently, we obtain
\begin{align*}
   & \hat{w}^{\top}\left\{f^{\bar{a}_{\hat{j}}}(\bar{x}+\hat{\alpha} \bar{d}\,)- f^{\bar{a}_{\hat{j}}}(\bar{x})-\rho\, \hat{\alpha} ~\underset{j\in [\bar{\omega}]}{\max}\,\phi\left(\nabla f^{\bar{a}_{j}}(\bar{x})^{\top}\bar{d}\,\right) e\right\}=0 \\
  \implies & \hat{w}^{\top}\left\{f^{\bar{a}_{\hat{j}}}(\bar{x}+\hat{\alpha} \bar{d}\,)- f^{\bar{a}_{\hat{j}}}(\bar{x})\right\}-\rho\, \hat{\alpha} ~\underset{j\in [\bar{\omega}]}{\max}\,\phi\left(\nabla f^{\bar{a}_{j}}(\bar{x})^{\top}\bar{d}\,\right)=0 
\end{align*}
because $\hat{w}\in C$. Define two functions on $\mathbb{R}_+$:  
\[\Theta_{\hat{w}}(\alpha):=\hat{w}^{\top}f^{\bar{a}_{\hat{j}}}(\bar{x}+\hat{\alpha} \bar{d}\,) \quad \text{and}\quad \psi(\alpha):=\Theta_{\hat{w}}(\alpha)-\Theta_{\hat{w}}(0)-\rho\, \alpha \,\underset{j\in [\bar{\omega}]}{\max}\,\phi\left(\nabla f^{\bar{a}_{j}}(\bar{x})^{\top}\bar{d}\,\right).\]
Note the $\psi$ is a continuous differentiable function with $\psi(\hat{\alpha})=\psi(0)$. Therefore, there exists an $\Tilde{\alpha}\in \hat{I}$ such that
\begin{align*}
 &\psi^{'}(\Tilde{\alpha})=0\\
 \implies~&~\hat{w}^{\top}\nabla f^{\bar{a}_{\hat{j}}}(\bar{x}+\Tilde{\alpha}\bar{d}\,)^{\top}\bar{d}-\rho  ~\underset{j\in [\bar{\omega}]}{\max}\,\phi\left(\nabla f^{\bar{a}_{j}}(\bar{x})^{\top}\bar{d}\,\right)=0\\
 \implies ~&~ \phi\left(\nabla f^{\bar{a}_{\hat{j}}}(\bar{x}+\Tilde{\alpha}\bar{d}\,)^{\top}\bar{d}\,\right)-\rho  ~\underset{j\in [\bar{\omega}]}{\max}\,\phi\left(\nabla f^{\bar{a}_{j}}(\bar{x})^{\top}\bar{d}\,\right)\geq 0\\
    \implies ~&~\rho\,  \underset{j\in [\bar{\omega}]}{\max}\,\phi\left(\nabla f^{\bar{a}_{j}}(\bar{x})^{\top}\bar{d}\,\right)\leq \underset{j\in [\bar{\omega}]}{\max}\,\phi\left(\nabla f^{\bar{a}_{j}}(\bar{x}+\Tilde{\alpha}\bar{d}\,)^{\top}\bar{d}\,\right). 
\end{align*}
Therefore, there exists $\alpha^{*}\in (0,\Tilde{\alpha}]\subseteq \hat{I}$ such that 
\begin{align}
    ~&~\rho\,  \underset{j\in [\bar{\omega}]}{\max}\,\phi\left(\nabla f^{\bar{a}_{j}}(\bar{x})^{\top}\bar{d}\,\right)= \underset{j\in [\bar{\omega}]}{\max}\,\phi\left(\nabla f^{\bar{a}_{j}}(\bar{x}+\alpha^{*}\bar{d}\,)^{\top}\bar{d}\,\right)\nonumber\\
    \implies~&~\sigma\, \underset{j\in [\bar{\omega}]}{\max}\,\phi\left(\nabla f^{\bar{a}_{j}}(\bar{x})^{\top}\bar{d}\,\right)< \underset{j\in [\bar{\omega}]}{\max}\,\phi\left(\nabla f^{\bar{a}_{j}}(\bar{x}+\alpha^{*}\bar{d}\,)^{\top}\bar{d}\,\right)\label{LT3}
\end{align}
because $\sigma\in (\rho,1)$ and $\underset{j\in [\bar{\omega}]}{\max}\,\phi\left(\nabla f^{\bar{a}_{j}}(\bar{x})^{\top}\bar{d}\,\right)<0$.\\
Thus, there exists a neighbourhood $I\subseteq \hat{I}\setminus \{0\}$ for which the relation \eqref{LT3} holds. Thus, the proof is completed.     
  \end{proof}

  \begin{remark}
It is important to remark that at any nonstationary point of \( \mathcal{SOP}_{\ell} \) \eqref{SPL}, when a \(K\)-descent direction is taken at that point, there exists an interval \( I \subset \mathbb{R}_{++} \) such that every step length \( \alpha \in I \) fulfilling the strong Wolfe conditions \eqref{Arm with strong. wol.} necessarily fulfils the standard Wolfe conditions \eqref{Arm with stan. wol.} as well.
 \end{remark}

\bigskip

Now, we focus on the development of a Hazer-Zhang conjugate gradient method for solving the $\mathcal{SOP}_{\ell}$ \eqref{SPL}. The general form of the conjugate gradient method for this problem at iteration $k$ is given by   
\begin{equation*}
  x_{k+1} := x_k + \alpha_k d_k,  
\end{equation*}
where \( \alpha_k \) is the step length and \( d_k \) is the search direction defined by
\begin{equation}\label{cgd}
 d_{k}=\left\{\begin{aligned}
   &u_{k} &&\text{ if } k = 0\\
   &u_{k}+\beta_{k}d_{k-1} && \text{ otherwise},
 \end{aligned}\right.   
\end{equation}
where $\beta_{k}$ is a scalar conjugate parameter.\\

Hazer-Zhang scalar conjugate parameter of the $\mathcal{SOP}_{\ell}$ \eqref{SPL} is defined as follows: 

 \begin{equation}\label{BHZ}
    \beta^{\mathrm{HZ}}_k:=\frac{1}{\mathcal{H}_2-\mathcal{H}_3}\left\{\mathcal{H}_{1}-\frac{\mu \Tilde{\mathcal{H}}_2}{\mathcal{H}_2-\mathcal{H}_3}\left\|\nabla f^{a_{j_{k-1}}}(x_{k}) -\nabla f^{a_{j_{k-1}}}(x_{k-1})\right \|^{2}_{2,\infty}\right\},
\end{equation}
where \begin{align*}
    &\mathcal{H}_1:=\underset{j\in [\omega_k]}{\max}~\phi \left(\nabla f^{a_{k,j}}(x_{k-1})^{\top}u_{k}\right)-\underset{j\in [\omega_k]}{\max}~ \phi \left(\nabla f^{a_{k,j}}(x_{k})^{\top}u_{k}\right),\\ &\mathcal{H}_2:=\underset{j\in [\omega_{k-1}]}{\max} ~\phi \left(\nabla f^{a_{k-1,j}}(x_{k})^{\top}d_{k-1}\right),\\
    & \bar{\mathcal{H}}_2:=\underset{j\in [\omega_{k}]}{\max}\,\phi \left(\nabla f^{a_{k,j}}(x_{k})^{\top}d_{k-1}\right),\\
 & \Tilde{\mathcal{H}}_2:=\max\{\mathcal{H}_2,\bar{\mathcal{H}}_2\}, \\ 
    &\mathcal{H}_3:=\underset{j\in [\omega_{k-1}]}{\max}\,\phi \left(\nabla f^{a_{k-1,j}}(x_{k-1})^{\top}d_{k-1}\right),  \\
    & j_{k-1}:=\underset{j\in [\omega_k]}{\text{argmax}}~\phi \left(\nabla f^{a_{k,j}}(x_{k-1})^{\top}u_{k}\right),~ \text{ and } \mu>\tfrac{1}{2}.
\end{align*}
Note that if $p=1$, the $\mathcal{SOP}_{\ell}$ \eqref{SPL} reduces into a vector optimization problem. As discussed in \cite{hu2024alternative}, we select the following $\beta_{k}$ by restricting the nonnegativity of $\beta^{\mathrm{HZ}}_k$ in the proposed method: 
\begin{equation}\label{beta_k}
    \beta_{k}:=\max\left\{0,\beta^{\mathrm{HZ}}_k\right\}.
\end{equation}
The proposed method is outlined in the following algorithm (Algorithm \ref{algo}).\\

 \begin{algorithm}[!htb] 
\caption{Non-linear Hager-Zhang conjugate gradient method for the $\mathcal{SOP}_{\ell}$ \eqref{SPL} }\label{algo}
\begin{enumerate}[\text{Step} 1]
\item  Initialize the constants $\rho\in (0,1),~\sigma\in (\rho,1)$, and $e\in \mathrm{int}(K)$. Provide a tolerance value $\varepsilon>0$. Choose a starting point $x_{0}\in\mathbb{R}^{n}$.
Set the iteration counter $k := 0.$ \\

\item Compute $M_k:=\mathrm{Min}(F(x_k),K),~\omega_{k} := \lvert \mathrm{Min}(F(x_k),K)\rvert,\text{ and }P_{k}:=P_{x_{k}}.$\\

\item Find
$$(a_{k},u_{k})\in \underset{(a,u)\in P_{k}\times\mathbb{R}^{n}}{\mathrm{argmin}} \left(\underset{j\in [\omega_{k}]}{\max}\phi\left(\nabla f^{a_{j}}(x_{k})^{\top}u\right)+ \tfrac{1}{2}\|u\|^{2}\right).$$

\item Stop if $\|u_{k}\| < \varepsilon$. Otherwise, go to Step 5.\\

\item Find $d_{k}$ using equations \eqref{cgd} and \eqref{beta_k}.\\

\item Find a step length $\alpha_{k}>0$ satisfying the conditions \eqref{Arm with stan. wol.}\\

\item    Obtain $x_{k+1} := x_{k}+\alpha_{k}d_{k}.$ Set $ k:=  k+1,$ and go to Step 2.
   \end{enumerate}
\end{algorithm}

\noindent
\textbf{Well-definedness of Algorithm \ref{algo}}: The well-definedness of Algorithm \ref{algo} relies on Step 3 and Step 6. Specifically, it requires the existence of a minimum point for the function $\varphi$ defined in \eqref{varphi}, as well as a step length that satisfies the standard Wolfe conditions \eqref{Arm with stan. wol.}. It is important to note that for any given $x_{k}\in \mathbb{R}^{n}$ and $a\in P_{x_{k}}$, the function $\mathcal{V}{x}(a,\cdot)$ is strongly convex, and the set $[p]$ is finite. Consequently, at each point $x_{k}$, there exists a minimum point $(a_{k},u_{k})$ for the function $\varphi$.
Moreover, at any iteration $k$ that does not satisfy Step 4, later, we show that the direction $d_{k}$ is $K$-descent (Theorem \ref{P_S_D}). Accordingly, from Theorem \ref{Exist_ST_L}, there exists $\alpha_{k}$ which holds the conditions \eqref{Arm with stan. wol.}. Hence, Algorithm \ref{algo} is well-defined.

\section{Convergence analysis}\label{sec_4}
To analyse the convergence of a sequence produced by Algorithm \ref{algo}, we adopt the following standard and widely used assumptions.\\

\begin{framed}

\begin{assumption}\label{Ass. 3}
 The lower level set $\mathcal{L}:=\left\{x \in \mathbb{R}^{n}: F(x)\preceq^{\ell}F(x_{0})\right\}$ is assumed to be bounded, and accordingly there exists $\gamma\in \mathbb{R}_{+}$ such that $\|x\|\leq \gamma $ for all $x\in \mathcal{L}$.   \\
\end{assumption}


\begin{assumption}\label{lip_ass}
Let $\Gamma$ be a nonempty open neighborhood of $\mathcal{L}$. 
The Jacobian $\nabla f^{i}$, for $i \in [p]$, satisfy a uniform Lipschitz condition on $\Gamma$ with constant $L>0$, namely,
\[
\|\nabla f^{i}(y)-\nabla f^{i}(z)\| \le L\|y-z\|, \qquad \forall \,i \in [p],\;  y,z \in \Gamma.
\]
\end{assumption}

\begin{assumption}\label{bddbe_ass}
Let $\{S_k\}$, with $S_k \in F(\mathcal{L})$, be any sequence such that $S_{k+1} \preceq^{\ell} S_k$ for all $k \ge 0$. Then, there exists a set $S$ of $\mathbb{R}^{m}$ which is bounded and satisfying  $S \preceq^{\ell} S_k$ for all $k \ge 0$.

\end{assumption}

\end{framed}

\bigskip

We now focus on establishing the global convergence of the proposed Algorithm \ref{algo}. To begin, we demonstrate that the direction $d_{k}$, generated by Algorithm \ref{algo} is $K$-descent direction. This is achieved by leveraging a key relation, as presented in the following lemma.\\

\begin{lemma}\emph{\cite{hu2024alternative}}
   Suppose that \emph{Assumption \ref{lip_ass}} holds. Then, the following relation is true for all $j \in [p]$: 
   \begin{equation}\label{lip_cond}
    \lvert \phi(\nabla f^{j}(x)^{\top}d)-  \phi(\nabla f^{j}(y)^{\top}d)\rvert \leq \|d\|\|\nabla f^{j}(x)-\nabla f^{j}(y)\|_{2,\infty}.  
   \end{equation}
\end{lemma}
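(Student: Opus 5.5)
We prove \eqref{lip_cond} for arbitrary $x,y\in\mathbb{R}^n$, $d\in\mathbb{R}^n$ and $j\in[p]$, using only the sublinearity and Lipschitz properties of the Drummond-Svaiter functional $\phi$. Assumption \ref{lip_ass} is not needed for the inequality itself; it only becomes relevant afterwards, when one bounds $\|\nabla f^{j}(x)-\nabla f^{j}(y)\|_{2,\infty}$ by $L\|x-y\|$ via \eqref{M_2}.

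Set $y_1:=\nabla f^{j}(x)^{\top}d$ and $y_2:=\nabla f^{j}(y)^{\top}d$, and write $A:=(\nabla f^{j}(x)-\nabla f^{j}(y))^{\top}$. Then $y_1-y_2=Ad$. By Lemma \ref{scal_func}(i),
\[
\phi(y_1)-\phi(y_2)\le \phi(y_1-y_2)\quad\text{and}\quad \phi(y_2)-\phi(y_1)\le\phi(y_2-y_1),
\]
so
\[
\lvert\phi(y_1)-\phi(y_2)\rvert\le \max\{\phi(Ad),\phi(-Ad)\}.
\]

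Next we bound $\phi(z)$ by $\|z\|_\infty$. For $w\in C$ and any $z$, we have $w^{\top}z\le\|w\|_1\|z\|_\infty$ by the Hölder inequality stated in the preliminaries. The key point is therefore a bound on $\|w\|_1$ over $C$.

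\emph{Main obstacle.} In the reference \cite{hu2024alternative}, $K=\mathbb{R}^m_+$ and $e=(1,\dots,1)^\top$, so $w\ge0$ and $w^\top e=1$ give $\|w\|_1=1$ directly. For a general cone, the normalization $w^\top e=1$ in \eqref{gen_polar_cone} must be interpreted so that $\sup_{w\in C}\|w\|_1\le 1$, which yields $\phi(z)\le\|z\|_\infty$. If this fails, the argument still works with a constant $c_C:=\max_{w\in C}\|w\|_1$, which is finite because $C$ is compact (Lemma \ref{com_of_C}). We would state the result with that convention, remarking that $c_C=1$ in the standard case $K=\mathbb{R}^m_+$, $e=(1,\dots,1)^\top$.

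Finally, by the definition of the induced norm,
\[
\|Ad\|_\infty\le\|A\|_{2,\infty}\|d\|.
\]
Combining the three steps, and noting that $\|A\|_{2,\infty}$ is the quantity written as $\|\nabla f^{j}(x)-\nabla f^{j}(y)\|_{2,\infty}$ (with transposition convention matching the one used in the paper), gives \eqref{lip_cond}.
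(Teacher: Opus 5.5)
The paper gives no proof of this lemma; it is imported by citation. So the only question is whether your argument proves the statement as written.

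Your three steps are the natural ones, and each is correct. Lemma \ref{scal_func}(i) gives $|\phi(y_1)-\phi(y_2)|\le\max\{\phi(Ad),\phi(-Ad)\}$. H\"older gives $\phi(z)\le c_C\|z\|_\infty$ with $c_C:=\max_{w\in C}\|w\|_1$. Finally, $\|Ad\|_\infty\le\|A\|_{2,\infty}\|d\|$. You are also right that Assumption \ref{lip_ass} plays no role.

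What this proves, however, is \eqref{lip_cond} with an extra factor $c_C$. The step you flag cannot be settled by ``interpreting'' the normalization $w^\top e=1$. For $C$ as fixed in \eqref{gen_polar_cone}, $c_C\le 1$ is false in general, and then \eqref{lip_cond} itself is false.

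For $K_1=\mathbb{R}^m_+$ with $e=(1,\ldots,1)^\top$, $C$ is the unit simplex and the inequality holds. The paper's other two cones break it:
\begin{itemize}
\item For $K_2$ with $e=(1,1)^\top$, $K_2^*$ is generated by $(-\tfrac13,1)^\top$ and $(3,-1)^\top$. So $C$ is the segment joining $(-\tfrac12,\tfrac32)^\top$ and $(\tfrac32,-\tfrac12)^\top$, $c_C=2$, and $\phi((0,1)^\top)=\tfrac32$.
\item For $K_3$ with $e=(0,0,1)^\top$, \eqref{ex_1_phi} gives $\phi((1,0,0)^\top)=\sqrt{90}$.
\end{itemize}
Now take $n=1$, $f^j(t)=\tfrac{t^2}{2}z$ with $z=(0,1)^\top$ (resp.\ $z=(1,0,0)^\top$), $x=1$, $y=0$, $d=1$. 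Assumption \ref{lip_ass} holds with $L=1$. The left side of \eqref{lip_cond} is $\phi(z)>1$, while the right side is $1$ under either transposition convention.

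In fact the constant-one inequality holds for all data exactly when $c_C\le 1$. Otherwise, let $w^\ast$ maximize $\|w\|_1$ over $C$ and let $z$ be its sign vector; then $\phi(z)\ge c_C>1=\|z\|_\infty$, and the same construction gives a counterexample.

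So the correct result is your fallback with the factor $c_C$, which is finite because $C$ is compact. Alternatively, one gets constant one under an extra normalization. The cleanest way to do this within the paper's framework is to rescale $e$. Since $\{w\in K^*: w^\top(te)=1\}=t^{-1}C$, replacing $e$ by $te$ with $t\ge c_C$ puts the new generator inside the unit $\ell_1$-ball, while $\phi(e)=1$ still holds for the new $e$. This also repairs Lemma \ref{scal_func}(iii), which fails for $K_2$ and $K_3$ for the same reason.

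If you keep $c_C$ instead, it propagates downstream. For example, the argument of Theorem \ref{P_S_D} then yields $\beta_k\bar{\mathcal{H}}_2\le\tfrac{c_C^2}{4\mu}\|u_k\|^2$ and requires $\mu>c_C^2/2$.

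Finally, state the norm convention explicitly rather than deferring to the paper. Your argument needs the $(2,\infty)$-norm of the $m\times n$ Jacobian difference $A$, i.e.\ the largest Euclidean norm of a row of $A$. Read literally as the norm of the $n\times m$ matrix $\nabla f^j(x)-\nabla f^j(y)$, \eqref{lip_cond} fails already for $m=1$, $K=\mathbb{R}_+$, $\phi(z)=z$. There it would assert $|b^\top d|\le\|b\|_\infty\|d\|$, which fails for $b=d=(1,1)^\top$.
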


\bigskip

Note that the direction $d_{k}$ becomes a $K$-descent if it satisfies the sufficient descent condition \eqref{S_d_c}. The following theorem shows that the direction $d_{k}$ satisfies the sufficient descent condition \eqref{S_d_c} and hence $d_{k}$ is a $K$-descent direction.\\

\begin{theorem}\label{P_S_D}
Consider the $\mathcal{SOP}_{\ell}$ \eqref{SPL}, and assume that \emph{Assumption \ref{lip_ass}} holds. Then, the direction \( d_k \) generated by \emph{Algorithm \ref{algo}} satisfies the sufficient descent condition \eqref{S_d_c} with the constant \( c = 1 - \tfrac{1}{2\mu} \).
\end{theorem}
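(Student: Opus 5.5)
The plan is to argue by induction on $k$, using three tools: the sublinearity of $\phi$ (Lemma \ref{scal_func}(i)), the curvature part \eqref{stan, wol.} of the Wolfe conditions, and the estimate \eqref{lip_cond}. Write $A_{k,j}:=\nabla f^{a_{k,j}}(x_k)$. The first step is to record the identity
\[
\max_{j\in[\omega_k]}\phi(A_{k,j}^{\top}u_k)=-\|u_k\|^2 .
\]
This follows from optimality of $u_k$ in Step 3: for fixed $a_k$, $u_k$ minimises $\max_j\phi(A_{k,j}^\top u)+\tfrac12\|u\|^2$. Restrict to the ray $u=tu_k$, $t\ge0$. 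By positive homogeneity of $\phi$ this becomes $t\,m+\tfrac{t^2}{2}\|u_k\|^2$, with $m$ the max above, and it must be minimised at $t=1$. Hence $m=-\|u_k\|^2<0$ at a nonstationary iterate.

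For $k=0$ we have $d_0=u_0$, and the claim reduces to $m<c\,m$, which holds since $m<0$ and $c=1-\tfrac1{2\mu}\in(0,1)$ because $\mu>\tfrac12$. The same observation handles every $k$ with $\beta_k=0$. So assume $k\ge1$ and $\beta_k=\beta_k^{\mathrm{HZ}}>0$. By the induction hypothesis $d_{k-1}$ is $K$-descent at $x_{k-1}$, so $\mathcal H_3<0$. The curvature condition \eqref{stan, wol.} then gives $\mathcal H_2\ge\sigma\mathcal H_3>\mathcal H_3$, so $\mathcal H_2-\mathcal H_3>0$ and $\beta_k^{\mathrm{HZ}}$ is well defined.

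Next, subadditivity and positive homogeneity of $\phi$, together with $\beta_k\ge0$, give
\[
\max_j\phi(A_{k,j}^\top d_k)\le \max_j\phi(A_{k,j}^\top u_k)+\beta_k\bar{\mathcal H}_2\le -\|u_k\|^2+\beta_k\tilde{\mathcal H}_2 .
\]
If $\tilde{\mathcal H}_2\le0$, the last term is nonpositive and the claim follows as in the $\beta_k=0$ case. If $\tilde{\mathcal H}_2>0$, I bound $\mathcal H_1$ from above. Since the index $j_{k-1}$ attains the first max, the second max is at least the value at the same index $j_{k-1}$. Then \eqref{lip_cond} yields
\[
\mathcal H_1\le \|u_k\|\,\|\Delta_k\|_{2,\infty},\qquad \Delta_k:=\nabla f^{a_{k,j_{k-1}}}(x_k)-\nabla f^{a_{k,j_{k-1}}}(x_{k-1}).
\]
Setting $t:=\tilde{\mathcal H}_2\|\Delta_k\|_{2,\infty}/(\mathcal H_2-\mathcal H_3)\ge0$, we obtain $\beta_k\tilde{\mathcal H}_2\le \|u_k\|\,t-\mu t^2$. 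By Young's inequality the right-hand side is at most $\|u_k\|^2/(4\mu)$.

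Combining these bounds gives
\[
\max_j\phi(A_{k,j}^\top d_k)\le-\bigl(1-\tfrac1{4\mu}\bigr)\|u_k\|^2=\bigl(1-\tfrac1{4\mu}\bigr)\max_j\phi(A_{k,j}^\top u_k).
\]
Because the max is negative and $1-\tfrac1{4\mu}>1-\tfrac1{2\mu}$, this is strictly less than $c\max_j\phi(A_{k,j}^\top u_k)$ with $c=1-\tfrac1{2\mu}$. That is exactly \eqref{S_d_c}, and it also closes the induction, since $d_k$ is now $K$-descent.

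I expect the main obstacle to be the bookkeeping of indices. The quantity $\beta_k$ mixes partitions $a_{k-1}$ and $a_k$, and the curvature condition \eqref{stan, wol.} at step $k-1$ involves $a_{k-1}$, which is why $\mathcal H_2$ (and not $\bar{\mathcal H}_2$) appears in the denominator. I must also check that the Lipschitz-type estimate is applied with the same partition index $a_{k,j_{k-1}}$ at both $x_k$ and $x_{k-1}$, so that it matches the norm appearing in \eqref{BHZ}. The sign split on $\tilde{\mathcal H}_2$ is what makes the use of $\tilde{\mathcal H}_2=\max\{\mathcal H_2,\bar{\mathcal H}_2\}$ work.
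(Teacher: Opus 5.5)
Your proposal is correct and follows the same route as the paper's proof. Both argue by induction on $k$, use the sublinearity bound $\max_j\phi(A_{k,j}^\top d_k)\le \max_j\phi(A_{k,j}^\top u_k)+\beta_k\bar{\mathcal H}_2$, bound $\mathcal H_1\le\|u_k\|\,\|\Delta_k\|_{2,\infty}$ via \eqref{lip_cond}, get $\mathcal H_2-\mathcal H_3>0$ from the curvature condition, and apply Young's inequality to bound the $\beta_k$-term by $\|u_k\|^2/(4\mu)$.

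The differences are minor. You use the exact identity $\max_j\phi(A_{k,j}^\top u_k)=-\|u_k\|^2$, which gives the sharper factor $1-\tfrac1{4\mu}$; the paper uses only $\max_j\phi(A_{k,j}^\top u_k)+\tfrac12\|u_k\|^2<0$, which lands exactly on $c=1-\tfrac1{2\mu}$. You also split on the sign of $\tilde{\mathcal H}_2$ rather than $\bar{\mathcal H}_2$.
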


\begin{proof}
We prove the result by mathematical induction. For the base case, note that $d_{0}=u_{0}$, and consequently,
\[
\underset{j \in [\omega_{0}]}{\max}\phi\!\left(\nabla f^{a_{0,j}}(x_{0})^{\top} d_{0}\right)
<
\left(1-\tfrac{1}{2\mu}\right)
\underset{j \in [\omega_{0}]}{\max}\phi\!\left(\nabla f^{a_{0,j}}(x_{0})^{\top} u_{0}\right).
\]
Hence, the sufficient descent condition~\eqref{S_d_c} holds for $k=0$ with $c=1-\tfrac{1}{2\mu}$.\\

\noindent
Now assume that $d_{k-1}$ satisfies the sufficient descent condition~\eqref{S_d_c} with $c=1-\tfrac{1}{2\mu}$. In the following, it is shown that $d_k$ also satisfies the sufficient descent condition~\eqref{S_d_c} with the same constant.\\

\noindent
    From the definition of $d_{k}$, we have
    \begin{align}
        &d_{k}=u_{k}+\beta_{k}d_{k-1}\nonumber\\
        \implies &\underset{j\in [\omega_k]}{\max}\,\phi \left(\nabla f^{a_{k,j}}(x_{k})^{\top}d_{k}\right)\leq \underset{j\in [\omega_k]}{\max}\,\phi \left(\nabla f^{a_{k,j}}(x_{k})^{\top}u_{k}\right)+\beta_{k}\bar{\mathcal{H}}_2 \label{R_15}    
    \end{align}
    because of Lemma \ref{scal_func} (i).\\

    \noindent Note that if $\bar{\mathcal{H}}_2\leq 0 $ or $\beta_{k}=0$, then 
    \begin{equation}\label{suff_value}
    \underset{j \in [\omega_{k}]}{\max}  \phi \left( \nabla f^{a_j}(x_{k})^{\top} d_{k} \right)  < c \underset{j \in [\omega_{k}]}{\max}  \phi \left( \nabla f^{a_j}(x_{k})^{\top} u_{k} \right)  \text{ as }c<1.
\end{equation}
We assume that $\bar{\mathcal{H}}_2 > 0 $ and $\beta_k=\beta_k^{\mathrm{HZ}}>0$.
From the equation \eqref{BHZ}, we get\\
\begin{align}\label{P_1}
 \beta_{k}= \frac{1}{\mathcal{H}_2-\mathcal{H}_3}\left\{\mathcal{H}_{1}-\frac{\mu \Tilde{\mathcal{H}}_2}{\mathcal{H}_2-\mathcal{H}_3}\left\|\nabla f^{a_{j_{k-1}}}(x_{k}) -\nabla f^{a_{j_{k-1}}}(x_{k-1})\right \|^{2}_{2,\infty}\right\}.
\end{align} 
Notice that 
\[\mathcal{H}_1=\underset{j\in [\omega_k]}{\max}\,\phi \left(\nabla f^{a_{k,j}}(x_{k-1})^{\top}u_{k}\right)-\underset{j\in [\omega_k]}{\max}\,\phi \left\{\nabla f^{a_{k,j}}(x_{k})^{\top}u_{k}\right\}.\]
Denoting $j_{k-1}\in\underset{j\in [\omega_k]}{\text{argmax}}\,\phi \left(\nabla f^{a_{k,j}}(x_{k-1})^{\top}u_{k}\right)$. Therefore, it follows that
\begin{align}
  \mathcal{H}_1 &\leq \phi \left(\nabla f^{a_{j_{k-1}}}(x_{k-1})^{\top}u_{k}\right)-\phi \left\{\nabla f^{a_{j_{k-1}}}(x_{k})^{\top}u_{k}\right\}\nonumber\\
  &\overset{\eqref{lip_cond}}{\leq}\|u_k\| \|\nabla f^{a_{j_{k-1}}}(x_{k-1})-\nabla f^{a_{j_{k-1}}}(x_{k})\|_{2,\infty}.  
 \label{R_11}
\end{align}
Furthermore, from the standard Wolfe condition \eqref{stan, wol.} with the fact that $d_{k-1}$ is a $K$-descent direction, we obtain
\begin{align}
&\underset{j\in [\omega_{k-1}]}{\max}\,\phi \left(\nabla f^{a_{k-1,j}}(x_{k})^{\top}d_{k-1}\right)\geq \sigma \underset{j\in [\omega_{k-1}]}{\max}\,\phi \left(\nabla f^{a_{k-1,j}}(x_{k-1})^{\top}d_{k-1}\right)\nonumber\\
\implies &\underset{j\in [\omega_{k-1}]}{\max}\,\phi \left(\nabla f^{a_{k-1,j}}(x_{k})^{\top}d_{k-1}\right)\geq  \underset{j\in [\omega_{k-1}]}{\max}\,\phi \left(\nabla f^{a_{k-1,j}}(x_{k-1})^{\top}d_{k-1}\right) \nonumber\\ 
\implies & \mathcal{H}_2-\mathcal{H}_3 >0.\label{R_12}
\end{align}
Now, from the relations \eqref{P_1}, \eqref{R_11},
and \eqref{R_12}, it follows that
\begin{align*}
     \beta_{k} & = \frac{1}{\mathcal{H}_2-\mathcal{H}_3}\left\{\mathcal{H}_{1}-\frac{\mu \Tilde{\mathcal{H}}_2}{\mathcal{H}_2-\mathcal{H}_3}\left\|\nabla f^{a_{j_{k-1}}}(x_{k}) -\nabla f^{a_{j_{k-1}}}(x_{k-1})\right \|^{2}_{2,\infty}\right\}\\
     & \leq \frac{1}{\mathcal{H}_2-\mathcal{H}_3}\|u_k\| \|\nabla f^{a_{j_{k-1}}}(x_{k-1})-\nabla f^{a_{j_{k-1}}}(x_{k})\|_{2,\infty}\\
     &~~~-\frac{\mu \Tilde{\mathcal{H}}_2}{(\mathcal{H}_2-\mathcal{H}_3)^2}\left\|\nabla f^{a_{j_{k-1}}}(x_{k}) -\nabla f^{a_{j_{k-1}}}(x_{k-1})\right \|^{2}_{2,\infty}.
\end{align*}
  Since $\bar{\mathcal{H}}_2>0$, therefore we get
  \begin{align}
 \beta_{k} \bar{\mathcal{H}}_2 & \leq \frac{\bar{\mathcal{H}}_2(\mathcal{H}_2-\mathcal{H}_3)}{(\mathcal{H}_2-\mathcal{H}_3)^{2}} \|u_k\|\|\nabla f^{a_{j_{k-1}}}(x_{k-1})-\nabla f^{a_{j_{k-1}}}(x_{k})\|_{2,\infty}\nonumber\\
     &~~~-\frac{\mu (\bar{\mathcal{H}}_2)^{2}}{(\mathcal{H}_2-\mathcal{H}_3)^2}\left\|\nabla f^{a_{j_{k-1}}}(x_{k}) -\nabla f^{a_{j_{k-1}}}(x_{k-1})\right \|^{2}_{2,\infty}. \label{R_13}
\end{align}
We know that $b_{1}b_{2}\leq \tfrac{1}{2}b^{2}_{1} + \tfrac{1}{2}b^{2}_{2}$ for any real number $b_{1}$ and $b_{2}$.
Denote $$b_{1}:=\sqrt{2 \mu}\bar{\mathcal{H}}_2 \|\nabla f^{a_{j_{k-1}}}(x_{k-1})-\nabla f^{a_{j_{k-1}}}(x_{k})\|_{2,\infty} \text{ and } b_{2}:=\tfrac{1}{\sqrt{2 \mu}}(\mathcal{H}_2-\mathcal{H}_3)\|u_{k}\|.$$ Thus, the relation \eqref{R_13} becomes
\begin{align}
    \beta_{k} \bar{\mathcal{H}}_2 & \leq \frac{1}{4\mu}\|u_{k}\|^{2}+\frac{\mu (\bar{\mathcal{H}}_2)^{2}}{(\mathcal{H}_2-\mathcal{H}_3)^2}\left\|\nabla f^{a_{j_{k-1}}}(x_{k}) -\nabla f^{a_{j_{k-1}}}(x_{k-1})\right \|^{2}_{2,\infty}\nonumber\\
     &~~~-\frac{\mu (\bar{\mathcal{H}}_2)^{2}}{(\mathcal{H}_2-\mathcal{H}_3)^2}\left\|\nabla f^{a_{j_{k-1}}}(x_{k}) -\nabla f^{a_{j_{k-1}}}(x_{k-1})\right \|^{2}_{2,\infty} \nonumber \\
     \implies  \beta_{k} \bar{\mathcal{H}}_2 & \leq \frac{1}{4\mu}\|u_{k}\|^{2}. \label{R_14}
\end{align}
From the relations \eqref{R_15} and \eqref{R_14}, we get
\begin{align}
    &\underset{j\in [\omega_k]}{\max}\,\phi \left(\nabla f^{a_{k,j}}(x_{k})^{\top}d_{k}\right)\leq \underset{j\in [\omega_k]}{\max}\,\phi \left(\nabla f^{a_{k,j}}(x_{k})^{\top}u_{k}\right) + \frac{1}{4\mu}\|u_{k}\|^{2}\nonumber\\
    \implies &    \underset{j \in [\omega_{k}]}{\max}  \phi \left( \nabla f^{a_{k,j}}(x_{k})^{\top} d_{k} \right)  < (1-\tfrac{1}{2\mu}) \underset{j \in [\omega_{k}]}{\max}  \phi \left( \nabla f^{a_{k,j}}(x_{k})^{\top} u_{k} \right) \label{R_16}   
\end{align}
because $\underset{j\in [\omega_k]}{\max}\,\phi \left(\nabla f^{a_{k,j}}(x_{k})^{\top}u_{k}\right)+\tfrac{1}{2}\|u_{k}\|^{2}<0$ as $x_{k}$ is nonstationary point.\\ Hence, from \eqref{suff_value} and \eqref{R_16},  $d_{k}$ satisfies sufficient descent condition \eqref{S_d_c} with $c:=1-\tfrac{1}{2\mu}$.
\end{proof}

\bigskip 

In the next theorem, we establish a Zoutendijk-type condition. This condition will play an important role in the convergence of the proposed Algorithm \ref{algo}\\

\bigskip
\begin{theorem}\label{the. zoun.}
Suppose that \emph{Assumptions \ref{lip_ass} and \ref{bddbe_ass}} hold. Let the iterative point $x_{k}$ and search direction $d_k$ be generated by \emph{Algorithm \ref{algo}} for all $k = 0, 1, 2, \ldots$. Then,
    \begin{align}\label{Zounten.}
        \sum_{k= 0}^{\infty}\left\{{\frac{\underset{j \in [\omega_{k}]}{\max}  \phi \left( \nabla f^{a_{k,j}}(x_{k})^{\top} d_{k} \right) }{\|d_{k}\|}}\right\}^{2}< +\infty .
    \end{align}
\end{theorem}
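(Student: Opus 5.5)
We follow the classical Zoutendijk argument: a lower bound on $\alpha_k$ from the curvature condition, an upper bound on the descent from the Armijo-type condition, and telescoping of the resulting per-iteration decrease. Abbreviate $\Phi_k:=\max_{j\in[\omega_k]}\phi(\nabla f^{a_{k,j}}(x_k)^{\top}d_k)$. By Theorem \ref{P_S_D}, $\Phi_k<0$ for every $k$, so Step 6 is well defined.

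\textbf{Step 1 (iterates stay in $\mathcal{L}$).} Condition \eqref{Arm in Stand.} together with transitivity of $\preceq^{\ell}$ gives $F(x_{k+1})\preceq^{\ell}F(x_k)$. Hence all $x_k$ lie in $\mathcal{L}\subset\Gamma$, and the Lipschitz estimate of Assumption \ref{lip_ass} applies along the iterates.

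\textbf{Step 2 (lower bound on the step).} Apply the curvature condition \eqref{stan, wol.} at $x_k$ with $a_k$ and subtract $\Phi_k$:
\[
(\sigma-1)\Phi_k\le \max_{j}\phi\bigl(\nabla f^{a_{k,j}}(x_{k+1})^{\top}d_k\bigr)-\Phi_k .
\]
Lemma \ref{scal_func}(i) bounds the difference of maxima by a maximum of differences. Then \eqref{lip_cond}, \eqref{M_2} and Assumption \ref{lip_ass} bound each difference by $\|d_k\|\,L\alpha_k\|d_k\|$. This yields
\[
\alpha_k\ge \frac{(1-\sigma)(-\Phi_k)}{L\|d_k\|^2}.
\]

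\textbf{Step 3 (per-iteration decrease).} From \eqref{Arm in Stand.}, each element of $F(x_{k+1})$ is dominated via $K$ by some $f^{a_{k,j}}(x_k)+\rho\alpha_k\Phi_k e$. Substituting Step 2 gives
\[
F(x_{k+1})\preceq^{\ell}F(x_k)+\{-\,c_0\theta_k e\},\qquad c_0:=\tfrac{\rho(1-\sigma)}{L},\quad \theta_k:=\Phi_k^2/\|d_k\|^2,
\]
where we also use Proposition \ref{mini_k} to pass from minimal elements to all of $F(x_k)$. Iterating, $F(x_{N+1})\preceq^{\ell}F(x_0)-c_0\bigl(\sum_{k\le N}\theta_k\bigr)e$, since translates by $-te$ with $t\ge0$ compose additively inside $+K$.

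\textbf{Step 4 (contradiction with boundedness), expected to be the main obstacle.} By Assumption \ref{bddbe_ass}, applied with $S_k=F(x_k)$, there is a bounded set $S$ with $S\preceq^{\ell}F(x_k)$ for all $k$. To turn the set relations into a scalar bound, I would use the functional $y\mapsto\min_{s\in S}\phi(y-s)$, or more simply the following argument. Pick any index chain $i$ with $f^{i}(x_{N+1})\in F(x_{N+1})$. Then
\[
f^{i}(x_{N+1})\in F(x_0)-c_0T_Ne+K,\qquad T_N:=\textstyle\sum_{k\le N}\theta_k .
\]
Also $f^{i}(x_{N+1})\in S+K$. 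Thus some $s\in S$ and some $f^{l}(x_0)$ satisfy $s\preceq_K f^{i}(x_{N+1})$ and $f^{l}(x_0)-c_0T_Ne\preceq_K f^{i}(x_{N+1})$.

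This alone does not bound $T_N$, because we need an upper bound on $f^{i}(x_{N+1})$. For that I would instead pick the element from the Armijo chain: take $f^{i}(x_{N+1})\preceq_K f^{l}(x_0)-c_0T_Ne$, which is the correct direction, since $F(x_0)-c_0T_Ne\subseteq F(x_{N+1})+K$. Combining with $S\preceq^{\ell}F(x_{N+1})$ gives some $s\in S$ with $s\preceq_K f^{l}(x_0)-c_0T_Ne$. Applying $\phi$ and Lemma \ref{scal_func}(ii),(i) with $\phi(e)=1$ yields
\[
c_0T_N\le \max_{l}\phi\bigl(f^{l}(x_0)\bigr)+\sup_{s\in S}\|s\|<\infty .
\]
Since this bound is uniform in $N$, letting $N\to\infty$ gives \eqref{Zounten.}.

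The delicate points are handling the index changes $a_k$ versus $a_{k+1}$ in the set ordering, which Proposition \ref{mini_k} resolves, and getting the direction of the inclusions right.
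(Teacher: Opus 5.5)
Your proof is correct and is essentially the paper's argument. The curvature condition with the Lipschitz bound gives $\alpha_k\ge(1-\sigma)(-\Phi_k)/(L\|d_k\|^2)$, and the Armijo part together with the bounded set $S$ of Assumption~\ref{bddbe_ass} yields summability; the only difference is that you telescope the set relations and scalarize once at the end, whereas the paper scalarizes every step with the monotone functional $\mathcal{J}(Z)=\inf_{z\in Z}\phi(z)$ and telescopes scalar inequalities (you also verify $x_k\in\mathcal{L}$, which the paper leaves implicit). One wording slip: in Step 3, $F(x_{k+1})\preceq^{\ell}\{f^{a_{k,j}}(x_k)+\rho\alpha_k\Phi_k e\}_{j}$ means each element of the right-hand set dominates some element of $F(x_{k+1})$, not the reverse, though your displayed relations already use the correct direction.
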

\begin{proof}
    Note that $\alpha_{k}$ satisfies standard Wolfe condition at $x_{k}$ for the direction $d_{k}$. Therefore, from \eqref{stan, wol.}, it follows that 
    \begin{align*}
        ~&~\sigma\,\underset{j \in [\omega_{k}]}{\max}  \phi \left( \nabla f^{a_{k,j}}(x_{k})^{\top} d_{k} \right) < \underset{j \in [\omega_{k}]}{\max}  \phi \left( \nabla f^{a_{k,j}}(x_{k+1})^{\top} d_{k} \right) ,\\
        \text{i.e.,} ~&~ (\sigma-1)\underset{j \in [\omega_{k}]}{\max}  \phi \left( \nabla f^{a_{k,j}}(x_{k})^{\top} d_{k} \right) < \underset{j \in [\omega_{k}]}{\max}  \phi \left( \nabla f^{a_{k,j}}(x_{k+1})^{\top} d_{k} \right) -\underset{j \in [\omega_{k}]}{\max}  \phi \left( \nabla f^{a_{k,j}}(x_{k})^{\top} d_{k} \right) .
    \end{align*}
    Denoting $j_{k}\in\underset{j\in [\omega_{k}]}{\text{argmax}}\, \phi \left( \nabla f^{a_{k,j}}(x_{k+1})^{\top} d_{k} \right) $. Then, we get  
    \begin{align*}
        (\sigma-1)\underset{j \in [\omega_{k}]}{\max}  \phi \left( \nabla f^{a_{k,j}}(x_{k})^{\top} d_{k} \right) &<  \phi \left( \nabla f^{a_{j_{k}}}(x_{k+1})^{\top} d_{k} \right) - \phi \left( \nabla f^{a_{j_{k}}}(x_{k})^{\top} d_{k} \right)\\
        &\leq\left\|\nabla f^{a_{j_{k}}}(x_{k+1}) -\nabla f^{a_{j_{k}}}(x_{k})\right\|\|d_{k}\| \leq L\alpha_{k}\|d_{k}\|^{2}
    \end{align*}
     because of Lemma \ref{scal_func} (iii) and Assumption \ref{lip_ass}. Therefore, we have
    \begin{equation}\label{ZAT1}
       \left\{ \tfrac{\underset{j \in [\omega_{k}]}{\max}  \phi \left( \nabla f^{a_{k,j}}(x_{k})^{\top} d_{k} \right)}{\|d_{k}\|}\right\}^{2}<\tfrac{L\alpha_{k}}{\sigma-1}\underset{j \in [\omega_{k}]}{\max}  \phi \left( \nabla f^{a_{k,j}}(x_{k})^{\top} d_{k} \right) .
    \end{equation}
    For establishing the relation \eqref{Zounten.}, we now define a function $\mathcal{J}:\mathbb{R}^{m}\rightarrow \mathbb{R}\cup \{-\infty\}$ by
    \[\mathcal{J}(Z):=\underset{z\in Z}{\inf}\phi(z), \quad Z\in \mathscr{P}(\mathbb{R}^{m}).\]
    From Lemma \ref{scal_func}, the function $\mathcal{J}$ preserves monotonicity. Specifically, for $Z_{1}, Z_{2}\in \mathscr{P}(\mathbb{R}^{m})$,
    \[ Z_{1}\preceq^{\ell}Z_{2}\implies \mathcal{J}(Z_1)\leq \mathcal{J}(Z_2). \]
   From the standard Wolfe condition \eqref{Arm in Stand.}, we have 
    \[ F(x_{k+1})\preceq^{{\ell}}\left\{f^{a_{k,j}}(x_k)+\rho\, \alpha_{k} ~\underset{j\in [\omega_k]}{\max}\phi\left(\nabla f^{a_{k,j}}(x_{k})^{\top}d_{k}\right)e\right\}_{j\in [\omega_{k}]}.\]
    Therefore, 
    \begin{align*}
     \mathcal{J} \circ F(x_{k+1})&\leq \underset{j\in [\omega_{k}]}{\min}\phi\left(f^{a_{k,j}}(x_k)+\rho\, \alpha_{k} \,\underset{j\in [\omega_k]}{\max}\phi\left(\nabla f^{a_{k,j}}(x_{k})^{\top}d_{k}\right)e\right)\\
     & \leq \underset{j\in [\omega_{k}]}{\min}\phi \left(f^{a_{k,j}}(x_k)\right)-\rho\, \alpha_{k} \,\underset{j\in [\omega_k]}{\max}\left\{\phi\left(\nabla f^{a_{k,j}}(x_{k})^{\top}d_{k}\right)\right\}\phi(-e)\\
     & = \underset{j\in [\omega_{k}]}{\min}\phi \left(f^{a_{k,j}}(x_k)\right)+\rho\, \alpha_{k} \,\underset{j\in [\omega_k]}{\max}\phi\left(\nabla f^{a_{k,j}}(x_{k})^{\top}d_{k}\right)\\
     & = \mathcal{J} \circ F(x_{k})+\rho\, \alpha_{k} ~\underset{j\in [\omega_k]}{\max}\phi\left(\nabla f^{a_{k,j}}(x_{k})^{\top}d_{k}\right)\\
     \implies \mathcal{J} \circ F(x_{k+1})&-\mathcal{J} \circ F(x_{k})\leq \rho\, \alpha_{k} ~\underset{j\in [\omega_k]}{\max}\phi\left(\nabla f^{\bar{a}_{k,j}}(x_{k})^{\top}d_{k}\right)\\
     \implies \mathcal{J} \circ F(x_{k+1})&-\mathcal{J} \circ F(x_{k-1})\leq \rho\sum_{i=k-1}^{k} \alpha_{i} \,\underset{j\in [\omega_i]}{\max}\phi\left(\nabla f^{a_{i,j}}(x_{i})^{\top}d_{i}\right)\\
     &\vdots\\
     \implies \mathcal{J} \circ F(x_{k+1})&-\mathcal{J} \circ F(x_{0})\leq \rho\sum_{i=0}^{k} \alpha_{i} ~\underset{j\in [\omega_i]}{\max}\phi\left(\nabla f^{a_{i,j}}(x_{i})^{\top}d_{i}\right).
    \end{align*}
    Note that $S\preceq^{\ell}F(x_{k})$ for all $k=0,1,2,\ldots$ by Assumption \ref{bddbe_ass}. Denoting $\hat{S}:=S-\mathcal{J} \circ F(x_{0})$, we have 
    \begin{align*}
       & \hat{S}\leq \rho\sum_{i=0}^{k} \alpha_{i} ~\underset{j\in [\omega_i]}{\max}\left\{\phi\left(\nabla f^{a_{i,j}}(x_{i})^{\top}d_{i}\right)\right\}<0\\
       \implies &  \frac{\hat{S}}{\sigma-1}\geq \frac{\rho}{\sigma-1}\sum_{i=0}^{k} \alpha_{i} ~\underset{j\in [\omega_i]}{\max}\left\{\phi\left(\nabla f^{a_{i,j}}(x_{i})^{\top}d_{i}\right)\right\}>0\\
       \implies &  \frac{1}{\sigma-1}\sum_{i=0}^{\infty} \alpha_{i} ~\underset{j\in [\omega_i]}{\max}\left\{\phi\left(\nabla f^{a_{i,j}}(x_{i})^{\top}d_{i}\right)\right\}<+\infty.
    \end{align*}
    Therefore, from the relation \eqref{ZAT1}, we get 
     \begin{align*}
        \sum_{k= 0}^{\infty}\left\{{\tfrac{\underset{j \in [\omega_{k}]}{\max}  \phi \left( \nabla f^{a_{k,j}}(x_{k})^{\top} d_{k} \right) }{\|d_{k}\|}}\right\}^{2}< +\infty .
    \end{align*}
    
\end{proof}

Next, we report a result, which is important for further analysis. \\

\begin{lemma}\label{Lemma_U_B}
    Suppose that $u_{k}$ is generated by \emph{Algorithm \ref{algo}} corresponding to each $x_{k}$. Moreover, \emph{Assumption \ref{Ass. 3}} holds. Then, there exists a positive constant $u_{b}$ such that 
    \begin{equation}\label{U_B_u}
        \|u_{k}\|\leq u_{b} \text{ for all }k\geq 0.
    \end{equation}
\end{lemma}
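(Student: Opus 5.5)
The plan is to bound $\|u_k\|$ by the norms of the Jacobians at $x_k$, and then bound those norms uniformly using the boundedness of $\mathcal{L}$ from Assumption \ref{Ass. 3}. I proceed in two steps.

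\emph{Step 1: a pointwise bound.} Since $(a_k,u_k)$ minimises $\mathcal{V}_{x_k}$ over $P_k\times\mathbb{R}^n$, comparing with $(a_k,0)$ gives
\[
\max_{j\in[\omega_k]}\phi\bigl(\nabla f^{a_{k,j}}(x_k)^{\top}u_k\bigr)+\tfrac12\|u_k\|^2\le \mathcal{V}_{x_k}(a_k,0)=0 .
\]
By Lemma \ref{scal_func}(iii) with $\tilde y=0$ (recall $\phi(0)=0$), each term satisfies
\[
\phi\bigl(\nabla f^{a_{k,j}}(x_k)^{\top}u_k\bigr)\ge -\|\nabla f^{a_{k,j}}(x_k)^{\top}u_k\|\ge -\|\nabla f^{a_{k,j}}(x_k)\|\,\|u_k\| .
\]
Combining the two displays and dividing by $\|u_k\|$ (the case $u_k=0$ is trivial) yields
\[
\|u_k\|\le 2\max_{i\in[p]}\|\nabla f^{i}(x_k)\| .
\]

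\emph{Step 2: the iterates lie in $\mathcal{L}$.} I show $x_k\in\mathcal{L}$ for all $k$. The Armijo part \eqref{Arm in Stand.} of the standard Wolfe conditions gives $F(x_{k+1})\preceq^{\ell}F(x_k)$, since
\[
F(x_{k+1})\preceq^{\ell}\{\cdots\}\prec^{\ell}F(x_k)
\]
and $\prec^{\ell}$ implies $\preceq^{\ell}$ (because $\mathrm{int}(K)\subseteq K$). Transitivity of $\preceq^{\ell}$ (using $K+K=K$) then gives $F(x_k)\preceq^{\ell}F(x_0)$, that is, $x_k\in\mathcal{L}$. Hence $\|x_k\|\le\gamma$, so every $x_k$ lies in the compact ball $\bar B(0,\gamma)$.

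Each $\nabla f^{i}$ is continuous, so it is bounded on this ball by some $G_i$. Setting $u_b:=2\max_i G_i$ (or $u_b:=1$ if this quantity is $0$) completes the argument.

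\emph{Anticipated obstacle.} The only delicate point is Step 2, namely confirming that the line-search relation truly yields monotone decrease in the $\preceq^{\ell}$ sense. It would fail if some iterate were stationary and the algorithm stopped; in that case the claim is trivial for the finitely many iterates. Should transitivity cause concern, I would verify it directly: if $B\subseteq A+K$ and $C\subseteq B+K$, then $C\subseteq A+K+K=A+K$.
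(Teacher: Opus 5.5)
Your proof is correct, but it takes a different route from the paper. The paper introduces the auxiliary functions $\bar{\mathcal{V}}_{x}(\zeta,d)=\max_{j\in\zeta}\phi(\nabla f^{j}(x)^{\top}d)+\tfrac12\|d\|^{2}$ indexed by subsets $\zeta\subseteq[p]$. It uses strong convexity to get a unique minimiser $d_{\zeta}(x)$ and invokes continuity of $x\mapsto d_{\zeta}(x)$, citing the analogous argument in Drummond--Svaiter. It then bounds $u_k$ by the maximum, over the finitely many $\zeta$, of the bounds of these continuous maps on $\mathcal{L}$, which it simply asserts is compact and contains all iterates.

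You replace the continuity-of-the-argmin step with a direct comparison against $(a_k,0)$, which gives the explicit estimate $\|u_k\|\le 2\max_{i\in[p]}\|\nabla f^{i}(x_k)\|$. This buys several things. The dependence of $P_{x}$ on $x$ becomes irrelevant, and you get a computable constant. You only need boundedness of the iterates, so you can work on the closed ball $\|x\|\le\gamma$ rather than needing $\mathcal{L}$ to be closed (true here, since $K$ is closed and the $f^{i}$ are continuous, but not argued in the paper). You also actually prove $x_k\in\mathcal{L}$ from the Armijo-type part of the Wolfe conditions and transitivity of $\preceq^{\ell}$, which the paper only states.

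One point in your favour is worth noting. Your Step 1 only uses the one-sided bound $\phi(y)\ge w_0^{\top}y\ge-\|w_0\|\,\|y\|$ for any fixed $w_0\in C$. With the normalisation $C=\{w\in K^{*}: w^{\top}e=1\}$, $\phi$ need not be $1$-Lipschitz as Lemma~\ref{scal_func}(iii) claims: for $K_3$ one has $\phi(y)=y_3+\sqrt{90(y_1^2+y_2^2)}$. Your argument survives this, with the constant $2$ replaced at worst by $2\|w_0\|$.
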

\begin{proof}
    Consider the function $\bar{\mathcal{V}}_{x}: \mathscr{P}([p])\times \mathbb{R}^{n}\rightarrow \mathbb{R}$ given by
    \begin{equation*}
    \bar{\mathcal{V}}_{x}(\zeta,d):=\underset{j\in \zeta}{\max}\,\phi(\nabla f^{j}(x)^{\top}d)+\tfrac{1}{2}\|d\|^{2}.
\end{equation*}
Since the mapping $d\longmapsto \phi(\nabla f^{j}(x)^{\top}d)$ is continuous for all $j\in [p]$, $d\longmapsto \bar{\mathcal{V}}_{x}(\zeta,\cdot)$ is a continuous mapping for each $\zeta \in \mathscr{P}([p])$.\\

\noindent
Moreover, $\mathcal{V}_{x}(\zeta,\cdot)$ is a strongly convex function of $d$ for any $x\in \mathbb{R}^{n}$ and $\zeta \in \mathscr{P}([p])$ because $\phi$ is sublinear.\\

\noindent Accordingly, there exists a unique minimizer of $\bar{\mathcal{V}}_{x}(\zeta,\cdot)$. Let $d_{\zeta}(x):=\underset{d\in \mathbb{R}^{n}}{\operatorname{argmin}}\,\mathcal{V}_{x}(\zeta,d)$. By the continuity of $\bar{\mathcal{V}}_{x}(\zeta,\cdot)$ and the uniqueness of its minimizer, it follows that for each $\zeta \in \mathscr{P}([p])$, the mapping $d_{\zeta}$ is continuous. This argument is similar to the proof of the third part of Lemma 3.3 in \cite{drummond2005steepest}.\\

\noindent 
Note that $x_{k}$ is generated by Algorithm \ref{algo}. Therefore, $x_{k}$ belongs to $ \mathcal{L}$ for all $k\geq0$.\\

\noindent
Since Assumption \ref{Ass. 3} holds, accordingly $\mathcal{L}$ is compact. Therefore, $d_{\zeta}$ is a bounded function on the set $\mathcal{L}$.\\

\noindent
Thus, by definition of $u_{k}$, $u_{k}$ is bounded for all $x_k\in \mathcal{L}$. Therefore, there exists a positive constant $u_{b}$ such that the relation \eqref{U_B_u} holds.
\end{proof}

In order to establish the convergence, we assume that Algorithm \ref{algo} does not stop at a stationary point, i.e, the $\mathcal{SOP}_{\ell}$ \eqref{SPL} is not solved in finite iterations. Then, we show that Algorithm \ref{algo} generates a sequence $\{x_{k}\}$ such that the $\mathcal{SOP}_{\ell}$ \eqref{SPL} is solved asymptotically,  
\begin{equation}\label{asymptot}
    \text{i.e.,}\quad \underset{k\rightarrow\infty}{\liminf} \|u_{k}\|=0 .
\end{equation}

Based on equation \eqref{asymptot}, we employ a proof by contradiction to establish the global convergence of the proposed method. Specifically, we first assume that relation \eqref{asymptot} does not hold and define a recursive relation on the sequence $\{d_k\}$ by estimating the parameter $\beta_{k}$. Subsequently, we demonstrate that the derived relation on $\{d_k\}$ leads to a contradiction, which confirms that relation \eqref{asymptot} must hold. Hence, the global convergence of the proposed method is established.\\

In the following lemma, we establish an estimate for the parameter $\beta _{k}$.\\

\begin{lemma}\label{Lemma_est_Beta}
    Suppose \emph{Assumptions \ref{Ass. 3}, \ref{lip_ass}}, and \emph{\ref{bddbe_ass}} hold and there exists a positive constant $l_b$ such that for all $k\geq 0$,
    \begin{equation}\label{u_l_b}
        \|u_{k}\|\geq l_b .
    \end{equation}
\end{lemma}
\noindent
Then, for all $k$, the following relation is true
\begin{equation}\label{R_est}
    \beta_{k}\leq M \|x_{k}-x_{k-1}\|,
\end{equation}
where $M:=\frac{2}{(1-\sigma)^{2}\left(1-\frac{1}{2\mu}\right)l^{2}_b}\left\{Lu_{b}+2\mu L^{2}\gamma\max \left\{1,\frac{\sigma}{1-\sigma}\right\}\right\}.$

\begin{proof}
    For $\beta_k^{\mathrm{HZ}}\leq 0$, the inequality \eqref{R_est} is trivial. Therefore, we are assuming $\beta_k^{\mathrm{HZ}}> 0$. Accordingly, we have 
    \[\beta_{k}  = \frac{1}{\mathcal{H}_2-\mathcal{H}_3}\left\{\mathcal{H}_{1}-\frac{\mu \Tilde{\mathcal{H}}_2}{\mathcal{H}_2-\mathcal{H}_3}\left\|\nabla f^{a_{j_{k-1}}}(x_{k}) -\nabla f^{a_{j_{k-1}}}(x_{k-1})\right \|^{2}_{2,\infty}\right\}. \]
    Therefore, by using the relation \eqref{R_11} and notion of $\Tilde{\mathcal{H}}_2$, we obtain
    \begin{align*}
       \beta_{k}\leq & \frac{1}{\mathcal{H}_2-\mathcal{H}_3}\bigg\{ \|u_k\|\|\nabla f^{a_{j_{k-1}}}(x_{k-1})-\nabla f^{a_{j_{k-1}}}(x_{k})\|_{2,\infty} \\
        &~~~~~~~~~~~~~~~~~~~\left.-\frac{\mu \mathcal{H}_2}{\mathcal{H}_2-\mathcal{H}_3}\left\|\nabla f^{a_{j_{k-1}}}(x_{k}) -\nabla f^{a_{j_{k-1}}}(x_{k-1})\right \|^{2}_{2,\infty}\right\}
    \end{align*}
     because $\mu>0$ and $\mathcal{H}_2-\mathcal{H}_3 \overset{\eqref{R_12}}{>} 0$. Then, it follows that
     \begin{align*}
         \beta_{k}\leq & \frac{1}{\mathcal{H}_2-\mathcal{H}_3}\bigg\{ \|u_k\|\|\nabla f^{a_{j_{k-1}}}(x_{k-1})-\nabla f^{a_{j_{k-1}}}(x_{k})\|_{2,\infty} \\
        &~~~~~~~~~~~~~~~~~~~\left.+\frac{\mu \lvert\mathcal{H}_2\rvert }{\mathcal{H}_2-\mathcal{H}_3}\left\|\nabla f^{a_{j_{k-1}}}(x_{k}) -\nabla f^{a_{j_{k-1}}}(x_{k-1})\right \|^{2}_{2,\infty}\right\}.
     \end{align*}
    From the relation \eqref{M_2}, we get
    \begin{align*}
        \beta_{k} &\leq \frac{1}{\mathcal{H}_2-\mathcal{H}_3}\bigg\{ \|\nabla f^{a_{j_{k-1}}}(x_{k-1})-\nabla f^{a_{j_{k-1}}}(x_{k})\|\|u_k\|\nonumber\\
        &~~~~~~~~~~~~~~~~~~~\left.+\frac{\mu \lvert\mathcal{H}_2\rvert }{\mathcal{H}_2-\mathcal{H}_3}\left\|\nabla f^{a_{j_{k-1}}}(x_{k}) -\nabla f^{a_{j_{k-1}}}(x_{k-1})\right \|^{2}\right\}.
    \end{align*}
    Therefore, by using the condition \eqref{U_B_u}, we have
    \begin{align*}
        &\beta_k \leq \frac{1}{\mathcal{H}_2-\mathcal{H}_3}\left\{ L u_{b}\|x_{k}-x_{k-1}\|\|u_k\|+\frac{\mu \lvert\mathcal{H}_2\rvert }{\mathcal{H}_2-\mathcal{H}_3}L^{2}\left\|x_{k}-x_{k-1}\right \|^{2}\right\}\\
        \implies & \beta_k\leq \frac{1}{\mathcal{H}_2-\mathcal{H}_3}\left\{ L u_{b}\|x_{k}-x_{k-1}\|+\frac{\mu \lvert\mathcal{H}_2\rvert }{\mathcal{H}_2-\mathcal{H}_3}L^{2}\left\|x_{k}-x_{k-1}\right \|\left(\|x_1\|+\|x_2\|\right)\right\}.
    \end{align*}
     Thus, by using Assumptions \ref{Ass. 3}, we get
     \begin{equation}\label{R_21}
         \beta_{k}\leq \frac{\|x_{k}-x_{k-1}\|}{\mathcal{H}_2-\mathcal{H}_3}\left\{ L u_{b}+\frac{\mu \lvert\mathcal{H}_2\rvert }{\mathcal{H}_2-\mathcal{H}_3}2\gamma L^{2}\right\}.
     \end{equation}

    \noindent
    From standard Wolfe condition \eqref{stan, wol.}, we get 
    \begin{align}
        \mathcal{H}_2-\mathcal{H}_3&\geq (\sigma-1) \mathcal{H}_3\nonumber\\
        &\geq -(1-\sigma)(1-\frac{1}{2\mu}) \underset{j \in [\omega_{k}]}{\max} \phi \left( \nabla f^{a_j}(x_{k-1})^{\top} u_{k-1}\right)\nonumber\\
       & \geq \frac{1}{2}(1-\sigma)(1-\frac{1}{2\mu})\|u_{k-1}\|^{2}\nonumber\\
       & \geq \frac{1}{2}(1-\sigma)(1-\frac{1}{2\mu}) l^{2}_b.\label{R_22}
    \end{align}
    Note that 
    \begin{align*}
        \mathcal{H}_2\geq \sigma(\mathcal{H}_3-\mathcal{H}_2)+\sigma\mathcal{H}_2
        \implies (1-\sigma)\mathcal{H}_2\geq -\sigma(\mathcal{H}_2-\mathcal{H}_3)
        \implies  \mathcal{H}_2\geq \frac{-\sigma}{1-\sigma}(\mathcal{H}_2-\mathcal{H}_3)
    \end{align*}
    and \begin{equation*}
       \mathcal{H}_2\leq \mathcal{H}_2-\mathcal{H}_3 \implies \frac{\mathcal{H}_2}{\mathcal{H}_2-\mathcal{H}_3}\leq 1.
    \end{equation*}
    Thus, \begin{equation}\label{R_23}
        \frac{\lvert\mathcal{H}_2\rvert}{\mathcal{H}_2-\mathcal{H}_3}\leq \max \left\{1,\frac{\sigma}{1-\sigma}\right\}.
    \end{equation}
    From the relations \eqref{R_21}, \eqref{R_22}, and \eqref{R_23}, we obtain
    \[\beta_{k}\leq \frac{2}{(1-\sigma)^{2}\left(1-\frac{1}{2\mu}\right)l^{2}_b}\left\{Lu_{b}+2\mu L^{2}\gamma\max \left\{1,\frac{\sigma}{1-\sigma}\right\}\right\} \|x_{k}-x_{k-1}\|.\]

\end{proof}

In the next lemma, we discuss the convergence of two series associated with the direction $d_{k}$ under the same contrary condition \eqref{u_l_b}.\\

\begin{lemma}\label{Lemma_dir_bdd}
    Suppose \emph{Assumptions \ref{Ass. 3}, \ref{lip_ass}}, and \emph{\ref{bddbe_ass}} and the relation \eqref{u_l_b} hold. Let $\{x_{k}\}$ be a sequence of nonstationary point of the $\mathcal{SOP}_{\ell}$ \eqref{SPL} generated by \emph{Algorithm \ref{algo}}. Then,
    \begin{equation}\label{dir._bdd}
       \sum_{k=1}^{\infty}\frac{1}{\|d_{k}\|^{2}}< +\infty \text{ and }  \sum_{k=1}^{\infty}\|r_{k}-r_{k-1}\|^{2}< +\infty,
    \end{equation}
    where $r_{k}:=\tfrac{d_{k}}{\|d_{k}\|}$.
\end{lemma}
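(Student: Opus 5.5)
For the first series I will combine the Zoutendijk-type condition (Theorem \ref{the. zoun.}) with the sufficient descent property (Theorem \ref{P_S_D}). By Theorem \ref{P_S_D},
\[
\max_{j\in[\omega_k]}\phi\big(\nabla f^{a_{k,j}}(x_k)^\top d_k\big) < c\max_{j\in[\omega_k]}\phi\big(\nabla f^{a_{k,j}}(x_k)^\top u_k\big)\le -\tfrac{c}{2}\|u_k\|^2,
\]
where $c=1-\tfrac{1}{2\mu}>0$. The last inequality uses $\max_j\phi(\nabla f^{a_{k,j}}(x_k)^\top u_k)+\tfrac12\|u_k\|^2=\varphi(x_k)<0$. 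The left-hand side is negative, so squaring gives
\[
\Big(\max_j\phi(\nabla f^{a_{k,j}}(x_k)^\top d_k)\Big)^2\ge \tfrac{c^2}{4}\|u_k\|^4\ge \tfrac{c^2}{4}l_b^4
\]
by \eqref{u_l_b}. Dividing by $\|d_k\|^2$ and summing, \eqref{Zounten.} yields $\sum_k 1/\|d_k\|^2\le \tfrac{4}{c^2l_b^4}\sum_k(\cdots)^2<\infty$. This proves the first half of \eqref{dir._bdd}.

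For the second series I will follow the Hager–Zhang / Gilbert–Nocedal argument, as in \cite{extension2020Hager-Zhang}, using the lemma giving \eqref{Ine_lemma}. Set $v_k:=u_k/\|d_k\|$ and $\delta_k:=\beta_k\|d_{k-1}\|/\|d_k\|\ge 0$ (recall $\beta_k\ge0$ by \eqref{beta_k}). Then \eqref{cgd} gives $r_k=v_k+\delta_k r_{k-1}$. Since $r_k,r_{k-1}$ are unit vectors, $\|v_k\|=\|r_k-\delta_k r_{k-1}\|=\|\delta_k r_k-r_{k-1}\|$. If $\delta_k>0$, \eqref{Ine_lemma} applied with $z=r_k$, $\bar z=r_{k-1}$ gives $\|r_k-r_{k-1}\|\le 2\|r_k-\delta_k r_{k-1}\|=2\|v_k\|$. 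If $\delta_k=0$, then $r_k=v_k$ and $\|r_k-r_{k-1}\|\le \|r_k\|+\|r_{k-1}\|=2=2\|v_k\|$ directly. In either case,
\[
\|r_k-r_{k-1}\|^2\le 4\|v_k\|^2=4\|u_k\|^2/\|d_k\|^2\le 4u_b^2/\|d_k\|^2
\]
by Lemma \ref{Lemma_U_B}. Summing and using the first series then proves the second half of \eqref{dir._bdd}.

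The main obstacle is the first step: making sure the sign handling is legitimate. The sufficient descent quantity must be strictly negative so that squaring preserves the inequality, and Theorem \ref{the. zoun.} must apply along the whole sequence; the latter is fine since each $d_k$ is $K$-descent, so Wolfe steps exist. I also need $d_k\neq0$ so that $r_k$ is defined, which follows from the strict descent inequality. The rest is routine.
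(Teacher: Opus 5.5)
Your proposal is correct and follows the paper's proof essentially step for step: the first series comes from combining the Zoutendijk condition with the sufficient descent property and $\|u_k\|\ge l_b$, and the second from writing $r_k = u_k/\|d_k\| + \delta_k r_{k-1}$ and then applying \eqref{Ine_lemma} together with Lemma~\ref{Lemma_U_B}. Your version is slightly tidier than the paper's in two places: you treat the case $\delta_k = 0$ (i.e.\ $\beta_k=0$) separately, whereas the paper simply asserts $q_k>0$, which fails when $\beta_k=0$; and your constant $4/(c^2 l_b^4)$ is correct, whereas the paper's chain has $1/l_b$ and $c$ where $1/l_b^4$ and $c^2$ are needed.
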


\begin{proof}
    From Theorem \ref{P_S_D}, $d_{k}$ satisfies the sufficient descent condition \eqref{S_d_c}. Therefore, $d_{k}\neq 0$ and $r_{k}$ is well-defined.\\

    \noindent Note that, from the relations \eqref{u_l_b} and \eqref{Zounten.}, and Theorem \ref{P_S_D}, we have
    \begin{align*}
        \sum_{k=0}^{\infty}\frac{1}{\|d_{k}\|^{2}} &\leq \frac{1}{l_b} \sum_{k=0}^{\infty}\frac{\|u_{k}\|^{4}}{\|d_k\|^{2}}\leq \frac{4}{l^{4}_b}\sum_{k=0}^{\infty}\left\{\frac{\underset{j \in [\omega_{k}]}{\max}  \phi \left( \nabla f^{a_{k,j}}(x_{k})^{\top} u_{k} \right) }{\|d_{k}\|}\right\}^{2} \\
        &\leq \frac{4}{(1-\tfrac{1}{2\mu})l^{4}_b}\sum_{k=0}^{\infty}\left\{\frac{\underset{j \in [\omega_{k}]}{\max}  \phi \left( \nabla f^{a_{k,j}}(x_{k})^{\top} d_{k} \right) }{\|d_{k}\|}\right\}^{2}<+\infty.
    \end{align*} 
    Consequently, the first inequality relation of \eqref{dir._bdd} holds. In order to show the second inequality relation of \eqref{dir._bdd} holds, note that $d_{k}:=u_{k}+\beta_{k}d_{k-1}$. Therefore, we have
    \begin{align}
        & r_{k}:=\frac{d_{k}}{\|d_{k}\|}=\frac{u_{k}}{\|d_{k}\|}+\frac{\beta_{k}d_{k-1}}{\|d_{k}\|}=\frac{u_{k}}{\|d_{k}\|}+q_{k}r_{k-1}, \quad \text{where }q_{k}:=\beta_{k}\frac{\|d_{k-1}\|}{\|d_{k}\|},\nonumber\\
        \text{i.e.,} \quad & \|r_{k}-q_{k}r_{k-1}\|=\frac{\|u_{k}\|}{\|d_{k}\|}\overset{\eqref{U_B_u}}{\leq} \frac{u_{b}}{\|d_{k}\|}.\label{BODL1}
    \end{align}
    Here, $q_{k}>0$ and $\|r_{k}\|=1$ for all $k$. Therefore, using relations \eqref{Ine_lemma} and \eqref{BODL1}, we get
    \[\|r_{k}-r_{k-1}\|\leq 2\|r_{k}-q_{k}r_{k-1}\|\leq \frac{2u_{b}}{\|d_{k}\|}.  \]
\noindent
    Hence, using the last inequality relation, it follows that
    \[\sum_{k=1}^{\infty}\|r_{k}-r_{k-1}\|^{2}\leq 4{u_b}^{2}\sum_{k=1}^{\infty}\frac{1}{\|d_{k}\|^{2}}<+\infty,\]
    which completes the proof.
\end{proof}

Next, we will establish the global convergence of the proposed method.\\

\begin{theorem}\label{T_Global}
    Consider \emph{Algorithm \ref{algo}} and assume that \emph{Assumptions \ref{Ass. 3}, \ref{lip_ass},} and \emph{\ref{bddbe_ass}} holds. Then, the relation \eqref{asymptot} is true.
    
\end{theorem}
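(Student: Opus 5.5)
We argue by contradiction, following the Gilbert--Nocedal ``Property~(*)'' template used by Hager--Zhang and by Gonçalves--Prudente in the vector case. Suppose \eqref{asymptot} fails. Then there is $l_b>0$ with $\|u_k\|\ge l_b$ for all $k$ (after discarding finitely many indices). This is exactly the hypothesis \eqref{u_l_b}, so Lemma~\ref{Lemma_est_Beta} and Lemma~\ref{Lemma_dir_bdd} are available. In particular $\beta_k\le M\|x_k-x_{k-1}\|=M\alpha_{k-1}\|d_{k-1}\|$, $\sum_k\|d_k\|^{-2}<\infty$, and $\sum_k\|r_k-r_{k-1}\|^2<\infty$. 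Lemma~\ref{Lemma_U_B} gives $\|u_k\|\le u_b$. Since every iterate lies in $\mathcal{L}$ and $\|x\|\le\gamma$ there by Assumption~\ref{Ass. 3}, we also have $\|x_k-x_{k-1}\|\le 2\gamma$. This gives the uniform bound $\beta_k\le 2\gamma M=:b$.

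Step 1 is to show that $\|d_k\|$ must grow at most linearly. If $\beta_k$ were small whenever the step is small, this would follow from the recursion. Set $\lambda:=1/(2Mb)$. Whenever $\|x_k-x_{k-1}\|\le\lambda$, we get $\beta_k\le 1/(2b)$. The recursion $\|d_k\|\le u_b+\beta_k\|d_{k-1}\|$ then gives geometric contraction on such indices, while on the other indices it gives growth by at most a factor $b$. Step 2 is the standard discrete counting lemma. For $\Delta\in\mathbb{Z}_{+}$, let $\mathcal{K}^\lambda_{k,\Delta}$ be the set of indices $i\in[k,k+\Delta-1]$ with $\|x_i-x_{i-1}\|>\lambda$. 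Suppose that for every $\Delta$ there is $k_0$ such that $|\mathcal{K}^\lambda_{k,\Delta}|\le\Delta/2$ for all $k\ge k_0$. Unrolling the recursion and using the Gilbert--Nocedal product bound $\prod\beta_i\le b^{|\mathcal K|}(2b)^{-(\Delta-|\mathcal K|)}\le 1$ over blocks, we obtain $\|d_k\|^2\le c\,k$. Then $\sum_k\|d_k\|^{-2}=\infty$, contradicting Lemma~\ref{Lemma_dir_bdd}. Hence there is some $\Delta$ for which $|\mathcal{K}^\lambda_{k,\Delta}|>\Delta/2$ for infinitely many $k$.

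Step 3 derives the contradiction from bounded iterates. For any $l\ge k$ we can write
\[
x_l-x_{k-1}=\sum_{i=k}^{l}\|x_i-x_{i-1}\|\,r_{i-1}=\sum_{i=k}^{l}\|x_i-x_{i-1}\|\,r_{k-1}+\sum_{i=k}^{l}\|x_i-x_{i-1}\|(r_{i-1}-r_{k-1}).
\]
Taking norms and using $\|x\|\le\gamma$ gives
\[
\sum_{i=k}^{l}\|x_i-x_{i-1}\|\le 2\gamma+2\gamma\sum_{i=k}^{l}\|r_{i-1}-r_{k-1}\|.
\]
Since $\sum\|r_k-r_{k-1}\|^2<\infty$, we can choose $k_0$ large enough that $\sum_{i\ge k_0}\|r_i-r_{i-1}\|^2\le 1/(4\Delta)$. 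By Cauchy--Schwarz, on a window of length $\Delta$ starting after $k_0$ we have $\|r_{i-1}-r_{k-1}\|\le 1/2$. This yields $\sum_{i=k}^{k+\Delta-1}\|x_i-x_{i-1}\|\le 4\gamma$ for all such windows. On the other hand, for the windows provided by Step 2 the left side exceeds $\lambda\Delta/2$. Choosing $\Delta$ with $\lambda\Delta/2>4\gamma$ gives the contradiction. Hence $\liminf\|u_k\|=0$.

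The main obstacle is the order of quantifiers between Steps 2 and 3. The threshold $\lambda$ must be fixed first from $M$ and $b$, then $\Delta>8\gamma/\lambda$, and only then $k_0$. In addition, the counting argument in Step 2 must be carried out carefully with the inequality $\|d_k\|\le u_b+\beta_k\|d_{k-1}\|$ in place of the usual squared-norm identity. I would first try the simplest version: bound $\|d_k\|\le u_b\sum_{j}\prod_{i=j+1}^{k}\beta_i$, and split each product into blocks of length $\Delta$, each of which has product at most $1$.
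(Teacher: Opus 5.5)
Your template is the one the paper invokes through \cite[Theorem 2]{extension2020Hager-Zhang}: contradiction with \eqref{u_l_b}, then Lemmas~\ref{Lemma_est_Beta} and~\ref{Lemma_dir_bdd}, then a window argument on the steps. However, two of your steps, as written, do not give what you claim.

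\textbf{Step 2.} You unroll $\|d_k\|\le u_b+\beta_k\|d_{k-1}\|$ and use only that each block of $\Delta$ factors has product at most $1$. This yields $\|d_k\|\le C(k-k_0+1)$, i.e.\ $\|d_k\|^2=O(k^2)$, which is compatible with $\sum_k\|d_k\|^{-2}<\infty$. So $\|d_k\|^2\le ck$ does not follow, and your Step~1 goal (``$\|d_k\|$ grows at most linearly'') is insufficient for the same reason. There are two ways to repair it:
\begin{enumerate}[(i)]
\item Take $b:=\max\{1,2\gamma M\}$. If $t:=|\mathcal{K}^\lambda_{k,\Delta}|\le\Delta/2$, the block product is at most $(2b^2)^{t}(2b)^{-\Delta}\le 2^{-\Delta/2}$. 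The products then decay geometrically, and $\{\|d_k\|\}$ is in fact bounded.
\item Square the recursion, $\|d_k\|^2\le 2u_b^2+2\beta_k^2\|d_{k-1}\|^2$. The block products of $2\beta_i^2$ are then at most $1$, exactly as in Gilbert--Nocedal.
\end{enumerate}

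\textbf{Step 3.} Your displayed inequality replaces the weights $\|x_i-x_{i-1}\|$ in the error term by $2\gamma$. Combined with $\|r_{i-1}-r_{k-1}\|\le\tfrac12$, it gives only $2\gamma+\gamma\Delta$, which grows with $\Delta$ and cannot be played against $\lambda\Delta/2$. You must keep the weights and absorb:
\[
\sum_{i=k}^{l}\|x_i-x_{i-1}\|\le\|x_l-x_{k-1}\|+\tfrac12\sum_{i=k}^{l}\|x_i-x_{i-1}\|
\;\Longrightarrow\;
\sum_{i=k}^{l}\|x_i-x_{i-1}\|\le 2\|x_l-x_{k-1}\|\le 4\gamma .
\]

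\textbf{Quantifiers.} Step~2 must be run for the already fixed $\Delta>8\gamma/\lambda$. The contradiction hypothesis should be: there is some $k_0$ with $|\mathcal{K}^\lambda_{k,\Delta}|\le\Delta/2$ for all $k\ge k_0$. Your stated conclusion, ``for some $\Delta$'', is too weak for Step~3, although the block argument does give the conclusion for every $\Delta$.

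With these repairs your Gilbert--Nocedal ``Property~$(*)$'' route is complete. The Hager--Zhang-type argument the paper defers to is more direct, because it uses $\beta_k\le M\|x_k-x_{k-1}\|$ itself. Once every window of $\Delta$ steps has total length at most $4\gamma$, the arithmetic--geometric mean inequality bounds the block product of the $\beta_i$ by $(4\gamma M/\Delta)^{\Delta}$, which is tiny for large $\Delta$. Hence $\{\|d_k\|\}$ is bounded, contradicting Lemma~\ref{Lemma_dir_bdd}, and no threshold $\lambda$, constant $b$, or counting lemma is needed. Your route, on the other hand, uses only the two Property~$(*)$ bounds and would apply to any nonnegative $\beta_k$ satisfying them.
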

\begin{proof}
     The proof is by contradiction of \eqref{asymptot} exactly as in \cite[Theorem 2]{extension2020Hager-Zhang}.
\end{proof}

\section{Numerical experiments}\label{sec_5}
This section presents some numerical results to demonstrate the practical performance and effectiveness of the proposed algorithm. The implementation is carried out using MATLAB R2023b, and all computations are performed on a PC equipped with an $11^{\text{th}}$-generation Intel(R) Core(TM) i5-11320H CPU, 8.0 GB of RAM, and the Windows 11 operating system.\\

We compare the proposed HZ conjugate gradient method \ref{algo} with the PRP and HS conjugate gradient methods discussed in \cite{Ghosh CGM}. The details of the experimental setup and the parameters used for the implementation are outlined below.

\begin{itemize}
    \item We explore three distinct types of the ordering cone \(K\) defined as follows:
\begin{align*}
    & K_{1} := \left\{(y_{1},y_{2}, \ldots,y_{m})^{\top}~\middle|~y_{i}\geq 0,~ i\in [m]\right\}, \\
     & K_{2} := \left\{(y_{1},y_{2})^{\top}~\middle|~-\tfrac{1}{3}y_{1}+y_{2}\geq 0,~ 3y_{1}-y_{2}\geq 0\right\}, \text{ and }  \\
     & K_{3} := \left\{(y_{1},y_{2},y_{3})^{\top}~\middle|~y^{2}_{3}\geq 90\left(y^{2}_{1}+y^{2}_{2}\right),~ y_{3}\geq 0\right\}.
\end{align*}
Here, $K_{1}\subset\mathscr{P}(\mathbb{R}^{m})$ and $K_{2}\subset\mathscr{P}(\mathbb{R}^{2})$ are finitely generated cone, while $K_{3}\subset\mathscr{P}(\mathbb{R}^{3})$ is an infinitely generated cone. The motivation for using different types of cones lies in the fact that \( \mathcal{SOP}_{\ell} \) \eqref{SPL} is highly dependent on the choice of the ordering cone \( K \). Employing the same objective function but varying the cone \( K \) can yield different solutions, highlighting the influence of the cone structure on the optimization problem.\\

\item For the cones \( K_{1} \) and \( K_{2} \), we choose \( e = (1, 1, \ldots, 1)^{\top} \). However, for \( K_{3} \), we use \( e = (0, 0, 1)^{\top} \), as \( (1, 1, 1)^{\top} \notin \mathrm{int}(K_{3}) \).\\

\item In Step 2 of Algorithm \ref{algo}, to determine the minimal set \( M_{k} \) at the point \( x_{k} \), we employ a straightforward approach involving pairwise comparisons of the elements in \( F(x_{k}) \).\\

\item We employ the MATLAB solver \texttt{fmincon}, using the \texttt{quadprog} and interior-point algorithms, to solve the optimization problem described in Step~3. Specifically, the \texttt{quadprog} algorithm is used for the problem associated with the cones \( K_1 \) and \( K_2 \), while the interior-point algorithm is applied to the problem associated with the cone \( K_3 \).\\

\item The stopping condition for each method is set as \( \|u_{k}\| \leq 10^{-4} \).\\

\item To implement the line search for determining the step length, we follow the strategy outlined in \cite{Wolfe2019prudent}, using the following parameters: an initial trial step length \( \alpha_{\mathrm{int}} = 1 \), a maximum allowable step length \( \alpha_{\mathrm{max}} = 100 \), and the constants \( \rho = 10^{-4} \) and \( \sigma = 0.1 \).\\

\item We use $\mu=1$ to find scalar conjugate parameter $\beta^{\text{HZ}}_{k}$ from \eqref{BHZ}. \\

\item Currently, in the existing literature of conjugate gradient methods for set optimization, the study in \cite{kumar2024nonlinear} proposed FR and CD methods, and \cite{Ghosh CGM} derived DY, PRP and HS methods. It is reported in \cite{Ghosh CGM} that PRP and HS methods perform better than FR, CD, and DY methods. Thus, in this paper, we report a performance comparison of the proposed HZ method with PRP and HS methods. To compare the existing PRP and HS methods in \cite{Ghosh CGM} with the proposed method, we consider a set of test problems involving unconstrained multi-objective optimization and set optimization problems, as outlined in Table \ref{Test_Problems}. These multi-objective optimization test problems are utilized to construct set-valued objective functions \( F: \mathbb{R}^n \rightrightarrows \mathbb{R}^m \) for \(\mathcal{SOP}_{\ell}\), defined as follows:  
\[
F(x) := f(x) + G(x),
\]
where \( f: \mathbb{R}^n \to \mathbb{R}^m \) is a vector-valued function, and \( G: \mathbb{R}^n \rightrightarrows \mathbb{R}^m \) is a set-valued map. For simplicity, the resulting set optimization problem is referred to by the same name as the corresponding multi-objective optimization test problem.\\

\item Two set-valued maps are defined to construct the set-valued objective function \( F \) in view of forming $\mathcal{SOP}_{\ell}$ \eqref{SPL} using the multi-objectives function. The first map is a bi-objective set-valued map, while the other is a tri-objective set-valued map.\\
 
\noindent 
\textbf{SVM1}:   
Consider the following set-valued function $G:\mathbb{R}^{n}\rightrightarrows \mathbb{R}^{2}$ defined by 
\begin{equation*}
    G(x):=\{g^{1}(x), g^{2}(x), \ldots, g^{p}(x)\},
\end{equation*}
where $g^{j}(x):=\left(g^{j}_{1}(x),g^{j}_{2}(x)\right)^{\top}$, $j\in [p]$, is with the following expression   
\begin{align*}
    & g^{j}_{1}(x):= \sum_{i=1}^{n}\frac{1}{2^{i-1}}\left\{\cos\left(x_{i}+ \tfrac{2\pi(j-1)}{p}\right)+ \sin\left(x_{i}+ \tfrac{2\pi(j-1)}{p}\right)\right\} \text{ and } \\
    & g^{j}_{2}(x):= \cos\left(\sum_{i=1}^{n}x_{i} +  \tfrac{2\pi(j-1)}{p}\right)^2. 
\end{align*}

\noindent
\textbf{SVM2}:   
Consider the following  set-valued function $G:\mathbb{R}^{n}\rightrightarrows \mathbb{R}^{3}$ defined by 
\begin{equation*}
    G(x):=\{g^{1}(x), g^{2}(x), \ldots, g^{p}(x)\},
\end{equation*}
where $g^{j}(x):=\left(g^{j}_{1}(x),g^{j}_{2}(x),g^{j}_{3}(x)\right)^{\top}$,  $j\in [p]$, is with the following expression 
\begin{align*}
&g^{j}_{1}(x) = 
\left(1 + 0.3\sum_{i=1}^{n}\cos(x_i)
      + 0.3\cos\!\left(\tfrac{6\pi(j-1)}{p}
      + 0.2\sum_{i=1}^{n} x_i\right)
\right)
\cos\!\left(\tfrac{2\pi(j-1)}{p}\right)
\ \\
&g^{j}_{2}(x) = 
\left(1 + 0.3\sum_{i=1}^{n}\cos(x_i)
      + 0.3\cos\!\left(\tfrac{6\pi(j-1)}{p}
      + 0.2\sum_{i=1}^{n} x_i\right)
\right)
\sin\!\left(\tfrac{2\pi(j-1)}{p}\right)
\\\
&g^{j}_{3}(x) = 
0.4\,\sin\!\left(\tfrac{10\pi(j-1)}{p}\right)\sum_{i=1}^{n}\sin(x_i)
\\
&\quad+
0.2\sin\!\left(\tfrac{2\pi(j-1)}{p} + 0.2\sum_{i=1}^{n} x_i\right)\left(1 + 0.3\sum_{i=1}^{n}\cos(x_i)
      + 0.3\cos\!\left(\tfrac{6\pi(j-1)}{p}
      + 0.2\sum_{i=1}^{n} x_i\right)
\right).
\end{align*}


\item For each test case, we generated 100 initial points arbitrarily within the box $lb \le x \le ub$ as indicated in the last two columns of Table \ref{Test_Problems} and executed the algorithm for each selected initial point. In the context of each experiment, we calculated the minimum (\textit{min}), mean, and maximum (\textit{max}) values for the following two metrics:
 \begin{itemize}
\item Time: The total runtime of the algorithm (in seconds) needed to meet the stopping criterion for each initial point.

\item Iteration counts: The number of iterations performed by the algorithm before the stopping criterion is satisfied for each initial point.

 \end{itemize}
 
\item All numerical values reported in Table~\ref{performance_table} are rounded to four decimal places. In Example~\ref{exm_1}, however, five decimal places are reported to distinguish successive iterates $x_k$.   
\end{itemize}

In view of the detailed explanation of our proposed algorithm \ref{algo} with the above experimental setup, we solve a $\mathcal{SOP}_{\ell}$ \eqref{SPL} with infinitely generated cone before comparing the methods.\\
\begin{example}\label{exm_1}
  \emph{Consider a $\mathcal{SOP}_{\ell}$ \eqref{SPL} with the following  set-valued map $F:\mathbb{R}\rightrightarrows \mathbb{R}^3$ given by}  
\end{example}
\[F(x):=\left\{f^{1}(x), f^{2}(x), \ldots, f^{5}(x)\right\}\]
associated with the cone $K_3$, where \[
f^{j}(x)
:= \left(
\begin{aligned}
x + \left(\tfrac{j-3}{2}\right)\sin^2(x)\\
\cos(2x) + \tfrac{1}{1 + e^{2x}} + \left(\tfrac{-j+3}{4}\right)\sin^2(x)\\
x \sin(2x) + \left(\tfrac{-j+3}{2}\right)\sin^2(x)
\end{aligned}
\right), \quad j=1,2,\ldots,5.
\]
\noindent In order to execute Algorithm \ref{algo} for any given initial point, we need an explicit expression of the function value $\phi(y),~y:=(y_1,y_2,y_3)^{\top}\in \mathbb{R}^{3}$. Note that the dual cone $$K^{*}_{3}=\left\{(w_1,w_2,w_3)^{\top} \middle | w^2_1+w^2_2\leq 90\, w_3^2, ~w_3\geq 0\right\}$$ and hence the generator of $K_3^{*}$ is $C=\left\{(w_1,w_2,1)^{\top} \middle | w^2_1+w^2_2\leq 90\right\}$ since $e= (0,0,1)^{\top}$. Therefore, we have
\begin{equation}\label{ex_1_phi}
    \phi(y)=y_3+ \sqrt{90\,(y^2_1+y^2_2)}.
\end{equation}

\noindent We choose an initial point
$x_{0}=-10.71$ (from $[-14,-7]$). Then, 
\begin{align*}
    &F(x_{0})=\big\{(-11.63064,   0.61904, 6.71045)^{\top},(-11.17032,   0.38888, 6.25013)^{\top},(-10.71,   \\& 0.15872, 5.78981)^{\top},(-10.24968, -0.071441, 5.32949)^{\top},(-9.78936,  -0.30160, 4.86917)^{\top}\big\}.
\end{align*}
Therefore, from the equations \eqref{minimal_element}, \eqref{card_fun}, and \eqref{partition_set}, we obtain the following
\begin{align*}
    &M_0=\big\{(-11.63064,   0.61904, 6.71045)^{\top},(-11.17032,   0.38888, 6.25013)^{\top},(-10.71,   \\& 0.15872, 5.78981)^{\top},(-10.24968, -0.071441, 5.32949)^{\top},(-9.78936,  -0.30160, 4.86917)^{\top}\big\}, \\&
   \omega_{0}=5, \text{ and } P_{0}=\{1\}\times\{2\}\times\{3\}\times\{4\}\times\{5\}.
\end{align*}
\noindent Now, using equation \eqref{ex_1_phi} and the MATLAB solver \texttt{fmincon} with the interior-point algorithm, we solve the optimization problem in Step 3 of Algorithm \ref{algo}, obtaining \(a_{0}=\{1,2,\ldots,5\}\) and \(u_{0}=-0.42268\).\\

\noindent Note that $\|u_0\|\geq \epsilon=10^{-4}$. Therefore, $x_0$ is not a required solution, and we need to find another alternative point.\\
  
\noindent From Steps 5 and 6, and the above experimental setup for line search, we obtain $d_0=-0.42268$ and $\alpha_0=1.33256$. Accordingly, we have $x_1:=x_0+\alpha_0 \,d_0=-11.27324$.\\

\noindent Proceeding in the same manner, we obtain the following iterative points $x_k$, as given in Table \ref{exam_1_Table}. Note that $\|u_3\|< \epsilon$. Therefore, $x_3$ is a required solution. \\

\begin{table}[ht]
\centering
\begin{tabular}{ l c c c c c c}
\toprule  
 {$x_{k}$} & {$a_k $} &  {$u_k$} &  {$\|u_k\|$} &  {$d_k$} &  {$\alpha_k$} & {$x_{k+1}:=x_{k}+\alpha_{k}d_k$} \\ \midrule 

$x_0=-10.71$   & $\{1,2,\ldots,5\}$   & $-0.42268$   & $0.42268$   & $-0.42268$        & $1.33256$  &   $-11.27324$    \\ \hline

$x_1=-11.27324$   & $\{1,2,\ldots,5\}$   & $-0.01982$   & $0.01982$   & $-1.30860$        & $0.00024$  &   $-11.27356$    \\ \hline

$x_2= -11.27356$  & $\{1,2,\ldots,5\}$   & $-0.00037$   & $0.00037$   & $-0.00037$            & $0.01698$     &   $-11.27357$     \\ \hline
$x_3= -11.27357$ & $\{1,2,\ldots,5\}$   & $-0.00001$   & $0.00001$  & -- & -- & --   \\
 \bottomrule
\end{tabular}
\caption{Computational details of Algorithm \ref{algo} for Example 1}\label{exam_1_Table} 
\end{table}

For illustrative purposes, we plot the curves \(f^{1}(x), f^{2}(x), \ldots, f^{5}(x)\). 
The red cluster of points represents the initial set value \(F(x_{0})\), while the green cluster corresponds to the terminal set value \(F(x_{3})\). 
The blue clusters denote the intermediate set values \(F(x_{1}) \text{ and } F(x_{2})\), as shown in Fig. \ref{example_1_fig}. 
Note that \(a_{3}=\{1,2,\ldots,5\}\) and \(\{f^{5}(x_{3}) - K_{3}\} \cap \{f^{5}(x)\} = \{f^{5}(x_{3})\}\). 
Hence, as also evident from Fig. \ref{example_1_fig}, \(x_{3} = -11.27357\) appears to be a weakly minimal solution and, therefore, a stationary point.\\

\begin{figure}
    \centering
    \includegraphics[width=0.8\linewidth]{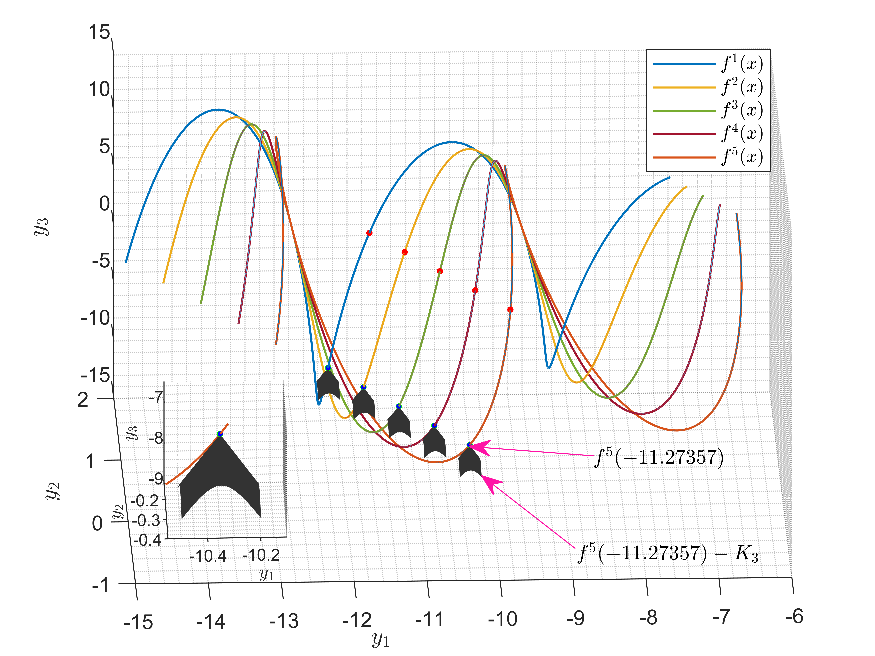}
    \caption{Graphical illustration of Example \ref{exm_1}}
    \label{example_1_fig}
\end{figure}

\begin{table}[ht]
\centering
\caption{Test Problems}\label{Test_Problems} 
\begin{tabular}{ l c c c l l }
\toprule  
 {Problem} & {Reference } &  {$m$} &  {$n$} &  {$lb^{\top}$} &  {$ub^{\top}$} \\ \midrule 
BK1   & \cite{NE_36}   & 2   & 2   & $(-5,-5)$        & $(10,10)$         \\ \hline

DGO2  & \cite{NE_36}   & 2   & 1   & $-9$            & $9$             \\ \hline
DD1  &  \cite{DD-1998}  & 2   & 5   & $(-20,-20,\ldots,-20)$            & $(20,20,\ldots,20)$             \\ \hline
Example \ref{exm_1} & \text{ Newly proposed } & 3   & 1  & $-14$ & $-7$ \\ 
\hline
GAAZ7    & \cite{ghosh2024newton} & 2   & 2   & $(-\pi,-\pi)$        & $(\pi,\pi)$           \\ \hline
GRPY2    & \cite{Ghosh CGM} & 2   & 2   & $(-\pi,-\pi)$ & $(\pi,\pi)$ \\ \hline
Hill  & \cite{NE_34}   & 2   & 2   & $(0,0)$          & $(1,1)$          \\ \hline
IKK1  & \cite{NE_36}   & 3   & 2   & $(-50,-50)$      & $(50,50)$        \\ \hline
JOS1-1 & \cite{NE_38}  & 2   & 10  & $(-2,-2,\ldots,-2)$   & $(2,2,\ldots,2)$   \\ \hline
KGYZ4    & \cite{kumar2024nonlinear} & 2   & 2   & $(-5\pi, -5\pi)$           & $(5\pi, 5\pi)$             \\ \hline
MOP1   & \cite{NE_36} & 2   & 1   & $-10^5$           & $10^5$             \\ \hline
MOP2   & \cite{NE_36} & 2   & 2   & $(-4,-4)$        & $(4,4)$           \\ \hline
MOP7   & \cite{NE_36} & 3   & 2   & $(-400,-400)$    & $(400,400)$       \\ \hline

Toi8    & \cite{Toint1983} & 3   & 3   & $(-1,-1,-1)$ & $(1,1,1)$ \\
\bottomrule
\end{tabular}
\end{table}

In this study, we compare the performance of the proposed HZ method with the HS and PRP methods in \cite{Ghosh CGM}. The results from Table \ref{performance_table} and Fig. \ref{performance_profiles_graph} indicate that the proposed HZ method outperforms HS and PRP. Although, there are some examples where HZ method does not perform well. However, it is important to note that the HZ method provides the descent direction, which ensure its convergence and make it a better choice than the HS and PRP methods.  
To provide deeper insights, we include some visual illustrations in Fig. \ref{G1} that demonstrate how the proposed method operates. 
In Fig. \ref{G1}, we have depicted the movement of iterates from a given initial point to a solution in image space for some problems via the proposed Algorithm \ref{algo}.
Red bunches denote the set value $F(x)$ at the initial point. Blue bunches indicate a set value at intermediate points, and green bunches indicate a set value at a stationary point.

\begin{landscape}
\begin{table}[ht]
\centering
\resizebox{22cm}{!}{
\begin{tabular}{c c c c c c c c c c c c c c c c c c c c c c}
\toprule
Problem & SVM & $K$ & p & \multicolumn{3}{c}{Min time} & \multicolumn{3}{c}{Mean time} & \multicolumn{3}{c}{Max time} & \multicolumn{3}{c}{Min iteration} & \multicolumn{3}{c}{Mean iteration} & \multicolumn{3}{c}{Max iteration} \\
\cmidrule(rl){5-7} \cmidrule(rl){8-10} \cmidrule(rl){11-13} \cmidrule(rl){14-16} \cmidrule(rl){17-19} \cmidrule(rl){20-22}
& & & & PRP & HS & HZ & PRP & HS & HZ & PRP & HS & HZ & PRP & HS & HZ & PRP & HS & HZ & PRP & HS & HZ \\
\midrule
BK1  & SVM1 & $K_{1}$ & 50  & 0.0269 & 0.0261 & 0.0270 & 0.2311 & 0.2107 & 0.1908 & 0.7708 & 1.1262 & 0.5579 & 1 & 1 & 1 & 8.75 & 4.39 & 4.13 & 117 & 11 & 9 \\
BK1  & SVM1 & $K_{2}$ & 50  & 0.0265 & 0.0266 & 0.0325 & 0.0738 & 0.0692 & 0.0863 & 0.2262 & 0.2391 & 0.2340 & 1 & 1 & 1 & 2.46 & 2.46 & 2.50 & 7 & 7 & 7 \\
JOS1 & SVM1 & $K_{1}$ & 50  & 0.2297 & 0.2229 & 0.2355 & 0.4425 & 0.4502 & 0.4988 & 0.7572 & 3.5387 & 2.8638 & 3 & 3 & 3 & 7.30 & 7.42 & 7.16 & 10 & 11 & 11 \\
JOS1 & SVM1 & $K_{2}$ & 50  & 0.2382 & 0.2684 & 0.2657 & 1.0074 & 0.9769 & 1.0515 & 3.8389 & 3.7536 & 3.9401 & 5 & 5 & 5 & 21.78 & 21.80 & 21.36 & 87 & 88 & 85 \\
DD1  & SVM1 & $K_{1}$ & 50  & 0.1770 & 0.1628 & 0.1800 & 0.3992 & 0.3807 & 0.4064 & 0.9733 & 0.8716 & 1.0228 & 5 & 5 & 5 & 9.62 & 9.54 & 9.38 & 17 & 16 & 17 \\
DD1  & SVM1 & $K_{2}$ & 50  & 1.3350 & 1.2555 & 1.3439 & 6.7951 & 6.9030 & 10.7099 & 24.4272 & 22.0062 & 44.6458 & 40 & 39 & 30 & 206.32 & 205.66 & 202.30 & 755 & 717 & 584 \\

Example 1 & -- & $K_{3}$ & 5  & 0.0142 & 0.0135  & 0.0160  & 0.0381 & 0.0402 & 0.0424 & 0.1977 & 0.1520 & 0.1717 & 0 & 0 & 0 & 0.52 & 0.52 & 0.56 & 4 & 4 & 4 \\

DGO2 & SVM1 & $K_{1}$ & 50  & 0.0104 & 0.0104 & 0.0115 & 0.4891 & 0.4295 & 0.4586 & 1.7298 & 1.4644 & 1.8628 & 0 & 0 & 0 & 8.94 & 8.94 & 9.00 & 31 & 31 & 32 \\
GAAZ7 & -- & $K_{1}$ & 100 & 0.0923 & 0.0783 & 0.0746 & 0.2209 & 0.2103 & 0.2114 & 0.4028 & 0.4494 & 0.4799 & 3 & 3 & 3 & 8.01 & 8.14 & 7.93 & 12 & 14 & 13 \\
GAAZ7 & -- & $K_{2}$ & 100 & 0.0861 & 0.1549 & 0.0993 & 0.5959 & 0.7491 & 0.5162 & 2.6516 & 7.8452 & 1.5752 & 3 & 3 & 3 & 8.44 & 13.10 & 9.58 & 26 & 215 & 21 \\
GRPY2 & -- & $K_{1}$ & 100 & 0.0113 & 0.0120 & 0.0137 & 0.2655 & 0.2851 & 0.3382 & 2.2687 & 2.0861 & 3.0987 & 0 & 0 & 0 & 10.02 & 10.24 & 9.34 & 93 & 89 & 84 \\
GRPY2 & -- & $K_{2}$ & 100 & 0.0146 & 0.0147 & 0.0137 & 0.4665 & 0.4725 & 0.3797 & 5.1871 & 4.6953 & 5.0769 & 0 & 0 & 0 & 11.46 & 11.42 & 11.18 & 140 & 137 & 135 \\
Hill  & SVM1 & $K_{1}$ & 50  & 0.0095 & 0.0101 & 0.0092 & 0.1951 & 0.2459 & 0.2003 & 1.0414 & 1.4208 & 1.0702 & 0 & 0 & 0 & 7.38 & 7.35 & 6.93 & 41 & 38 & 34 \\
Hill  & SVM1 & $K_{2}$ & 50  & 0.0164 & 0.0113 & 0.0102 & 0.2666 & 0.1899 & 0.1576 & 2.8456 & 1.9670 & 1.7628 & 0 & 0 & 0 & 4.30 & 5.80 & 5.70 & 41 & 63 & 65 \\
IKK1 & SVM2 & $K_{1}$ & 50  & 0.0109 & 0.0119 & 0.0110 & 0.1268 & 0.1553 & 0.1318 & 0.7470 & 1.1906 & 0.9286 & 0 & 0 & 0 & 2.64 & 2.70 & 2.76 & 17 & 17 & 14 \\
KGYZ4 & -- & $K_{1}$ & 50  & 0.0098 & 0.0091 & 0.0087 & 0.4124 & 0.3411 & 0.3210 & 5.5776 & 4.4038 & 4.0878 & 0 & 0 & 0 & 25.80 & 12.88 & 12.84 & 572 & 205 & 204 \\
MOP1 & SVM1 & $K_{1}$ & 50  & 0.0184 & 0.0208 & 0.0186 & 0.1014 & 0.1517 & 0.0659 & 0.4198 & 0.8113 & 0.1714 & 1 & 1 & 1 & 2.66 & 2.67 & 2.44 & 7 & 7 & 5 \\
MOP1 & SVM1 & $K_{2}$ & 50  & 0.0266 & 0.0278 & 0.0253 & 0.0538 & 0.0614 & 0.0542 & 0.1060 & 0.1180 & 0.1709 & 1 & 1 & 1 & 1.54 & 1.54 & 1.54 & 2 & 2 & 2 \\
MOP2 & SVM1 & $K_{1}$ & 50  & 0.0087 & 0.0087 & 0.0087 & 0.4646 & 0.4644 & 0.1956 & 4.0385 & 3.3501 & 2.8503 & 0 & 0 & 0 & 17.02 & 19.16 & 4.46 & 147 & 147 & 101 \\
MOP7 & SVM2 & $K_{1}$ & 50  & 0.3802 & 0.2778 & 0.1757 & 0.9821 & 0.8243 & 0.6628 & 2.6317 & 2.2910 & 1.4923 & 4 & 3 & 3 & 9.56 & 10.54 & 10.22 & 33 & 26 & 22 \\
Toi8 & SVM2 & $K_{1}$ & 50  & 0.0124 & 0.0133 & 0.0127 & 0.9448 & 0.9807 & 1.3186 & 4.7494 & 4.7555 & 7.8628 & 0 & 0 & 0 & 21.08 & 20.02 & 34.88 & 159 & 130 & 192 \\

\bottomrule
\end{tabular}
}
\caption{Performance metrics of PRP, HS, and HZ on the test problems for a hundred arbitrarily chosen initial points}
\label{performance_table} 
\end{table}

\end{landscape}

\begin{figure}[ht!]
  \centering
    \subfloat[Min computational time]
{\includegraphics[width=0.33\textwidth]{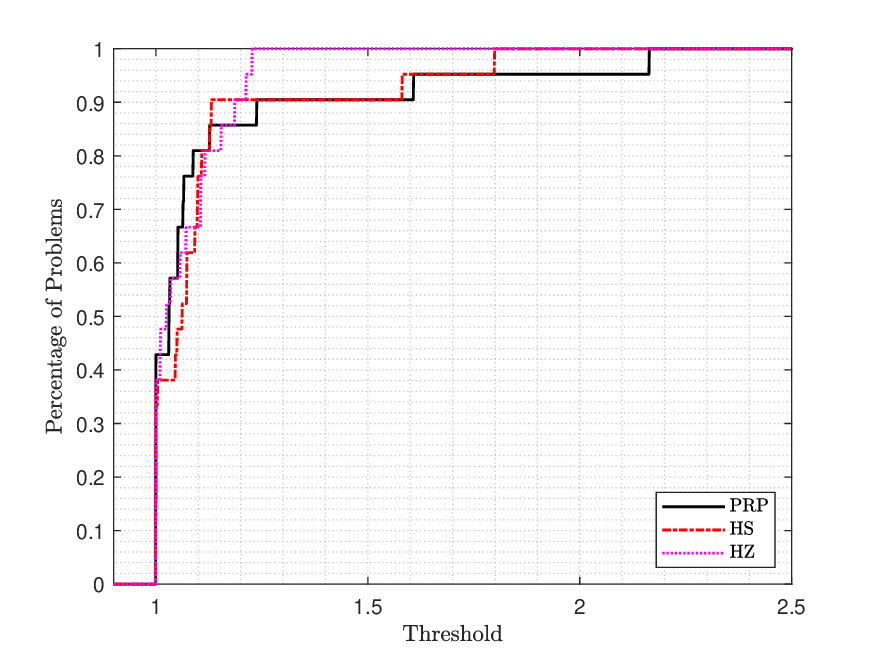}}
\hfill
\subfloat[Mean computational time]
{\includegraphics[width=0.33\textwidth]{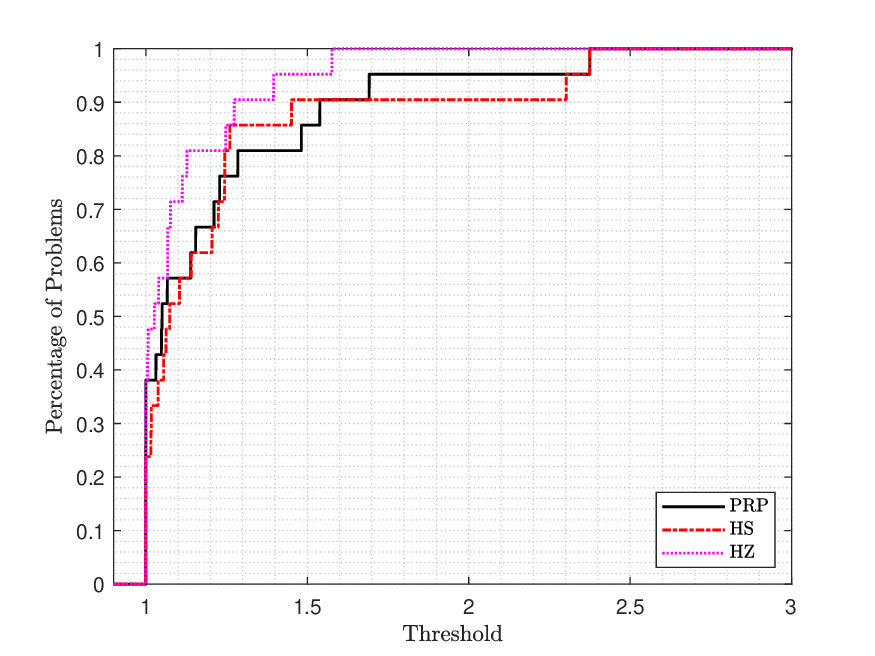}}
\hfill
\subfloat[Max computational time]
{\includegraphics[width=0.33\textwidth]{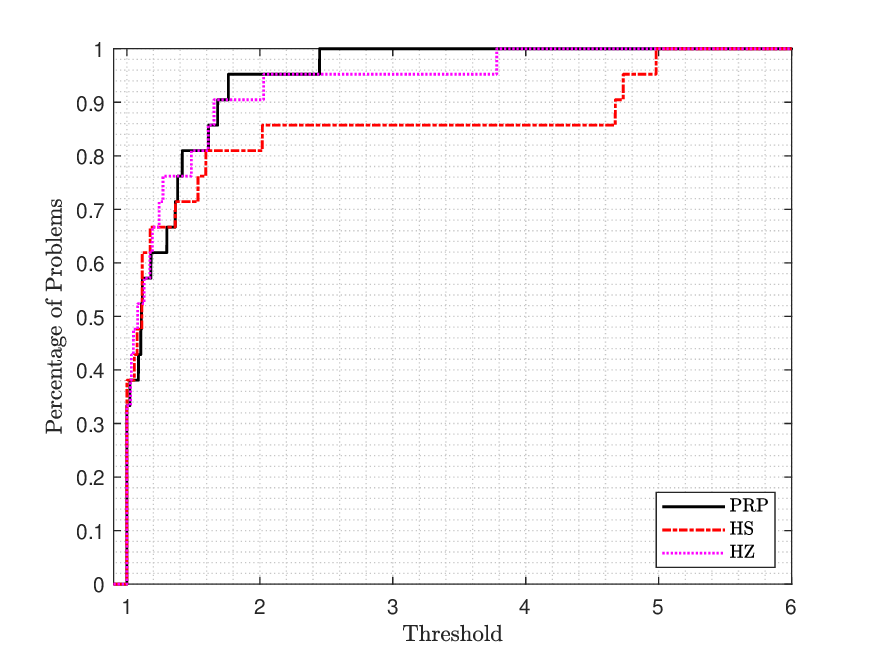}}
\hfill
  \subfloat[Min iteration count]
 {\includegraphics[width=0.33\textwidth]{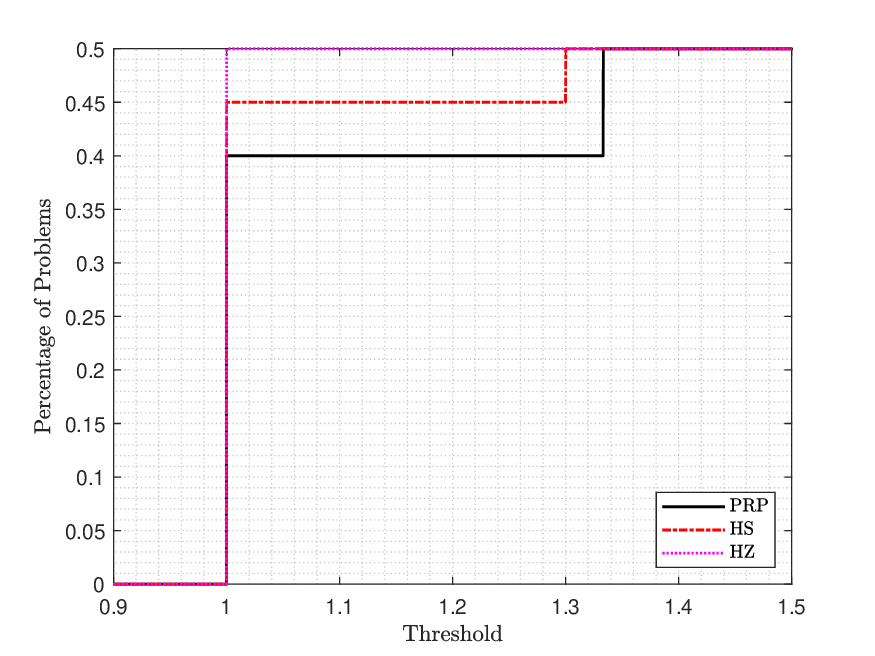}}
\hfill
  \subfloat[Mean iteration count]
{\includegraphics[width=0.33\textwidth]{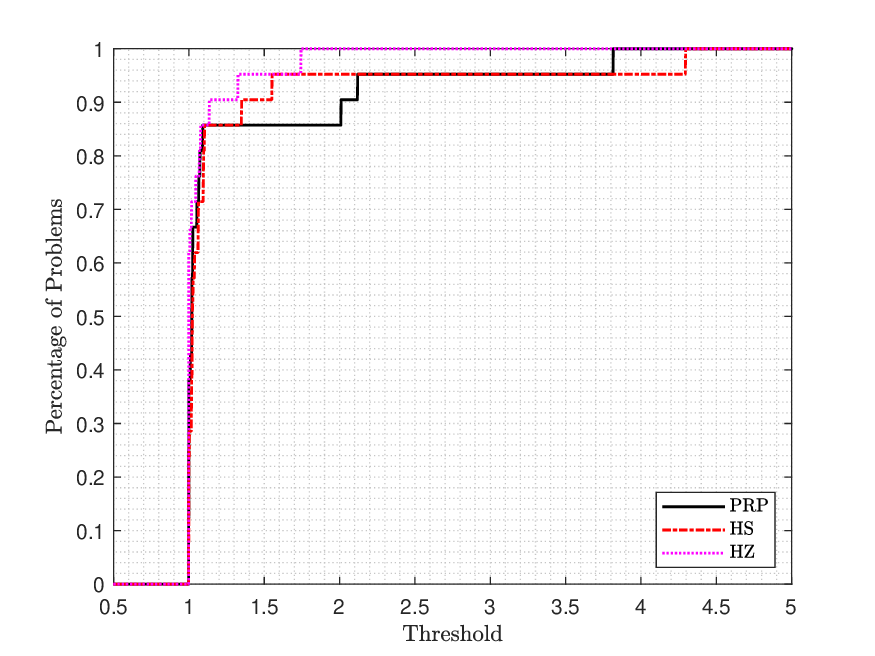}}
\hfill
  \subfloat[Max iteration count]
{\includegraphics[width=0.33\textwidth]{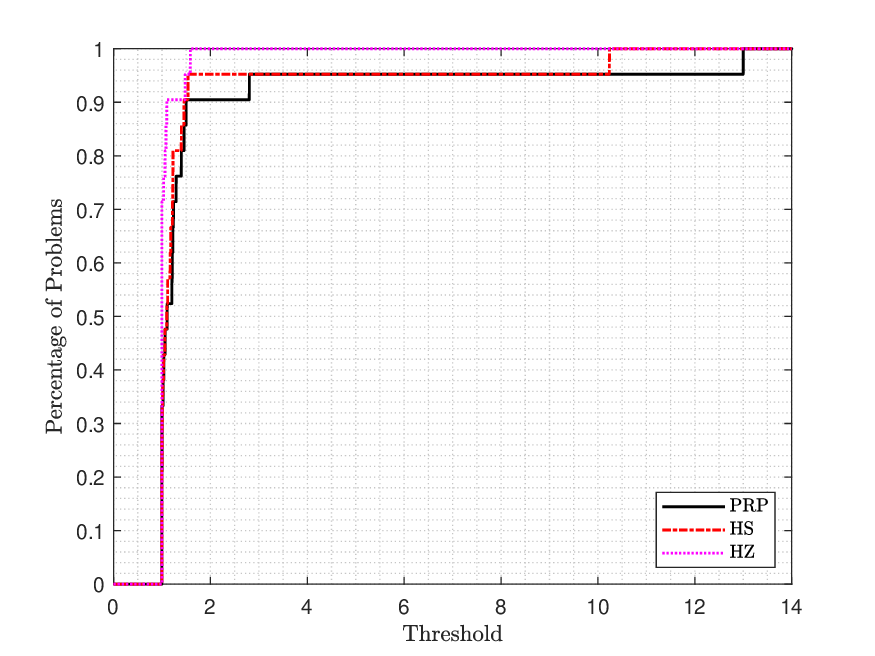}}
\caption{Performance profiles for PRP, HS, and HZ for the test problems given in Table \ref{Test_Problems}}\label{performance_profiles_graph}
\end{figure}

\begin{figure}[ht!]
  \centering
    \subfloat[DD1 with $K_1$ at $x=(-1.9384,0.1313$\\
    $  0.84486,      1.1847,     -1.8853)^{\top}$]{\includegraphics[width=0.5\textwidth]{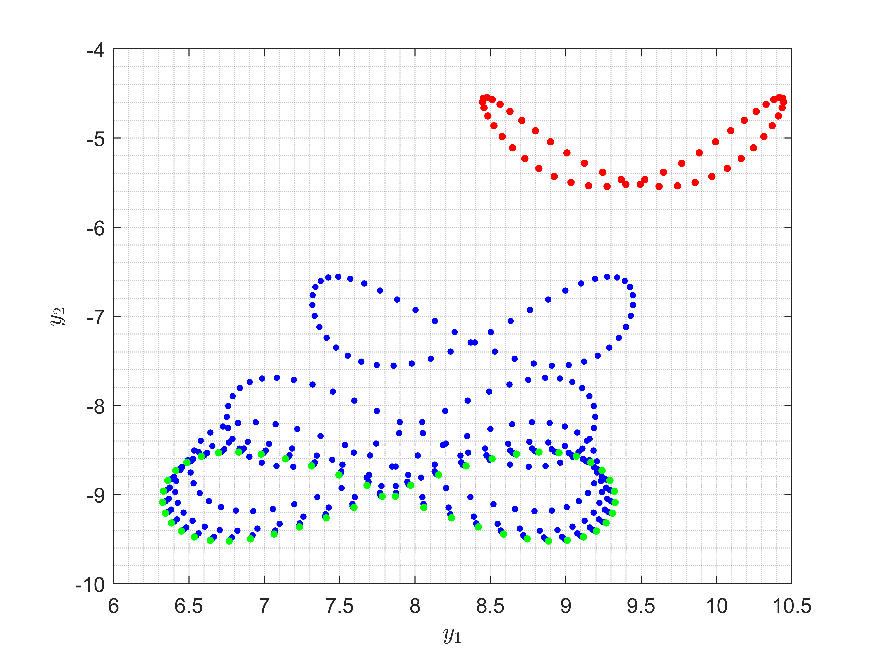}}
    \hfill
  \subfloat[GAAZ7 with $K_1$ at $x=(-0.24677, $\\  $0.19688)^{\top}$]{\includegraphics[width=0.5\textwidth]{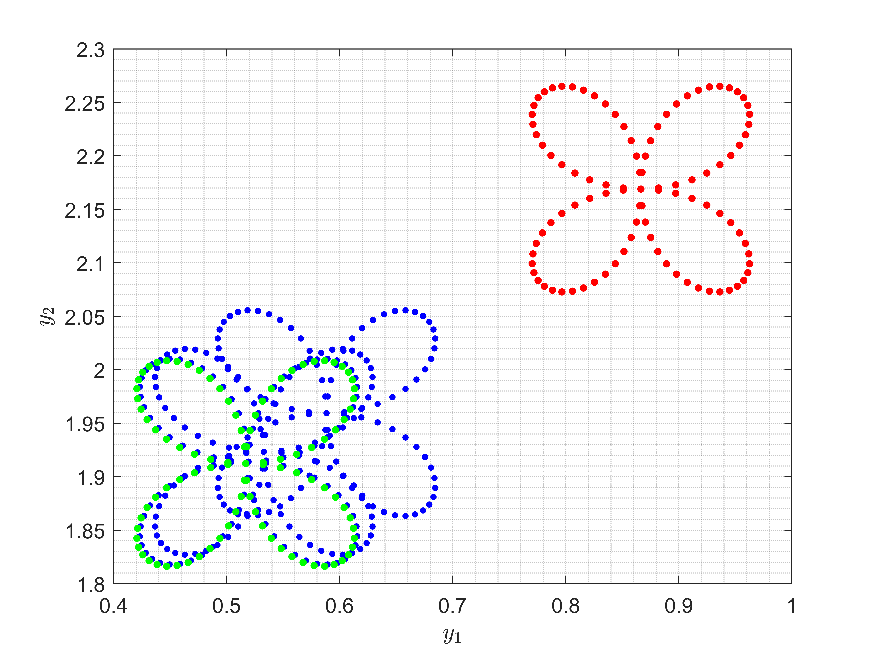}}
  \hfill
  \subfloat[GAAZ7 with $K_2$ at $x=(-0.42945,    $\\$-1.3167 )^{\top}$]{\includegraphics[width=0.5\textwidth]{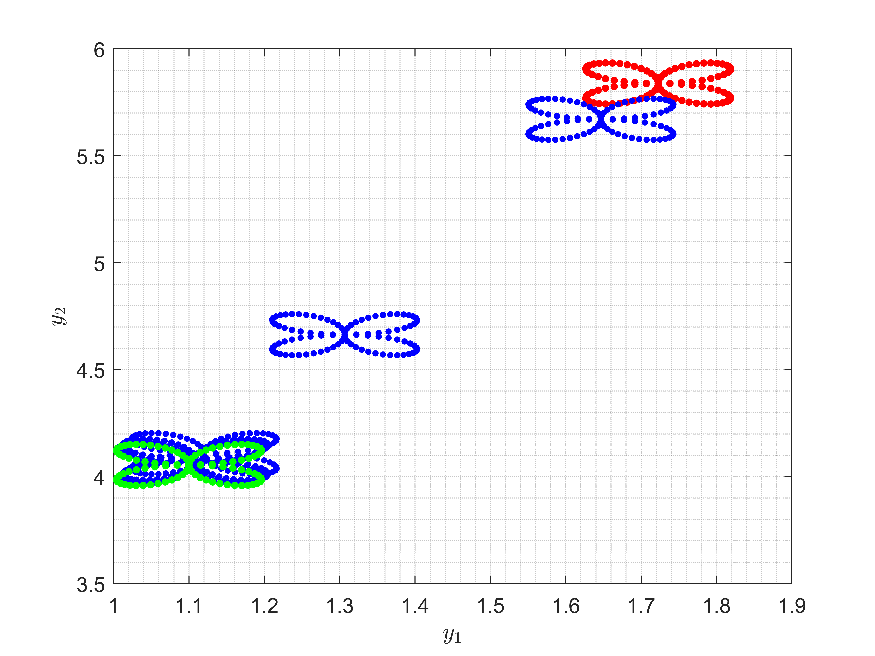}}
    \hfill
  \subfloat[GRPY5 with $K_1$ at $x=(2.4561,$\\ $2.8858)^{\top}$ ]{\includegraphics[width=0.5\textwidth]{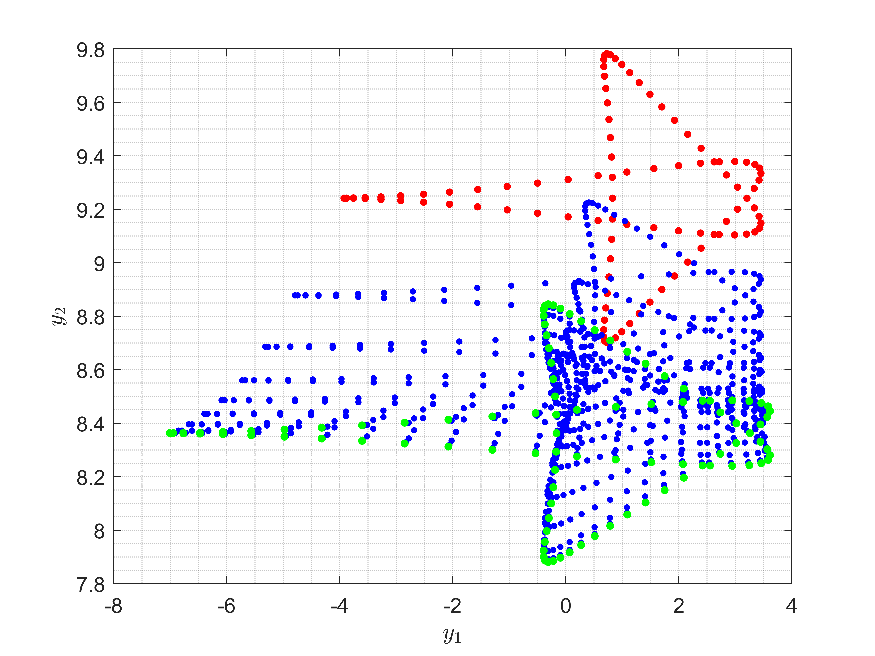}}
  \hfill
  \subfloat[KGYZ4 with $K_1$ at $x=(1.7911,$\\ $1.3603)^{\top}$ ]{\includegraphics[width=0.5\textwidth]{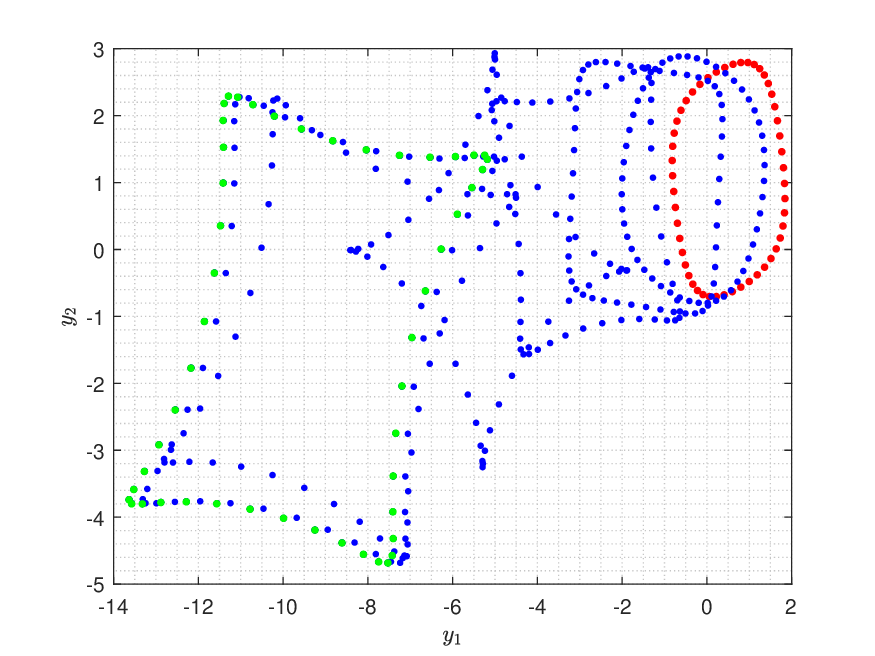}}
    \hfill
  \subfloat[MOP7 with $K1$ at $x=(1.2988 ,    $\\ $-3.2831)^{\top}$]{\includegraphics[width=0.5\textwidth]{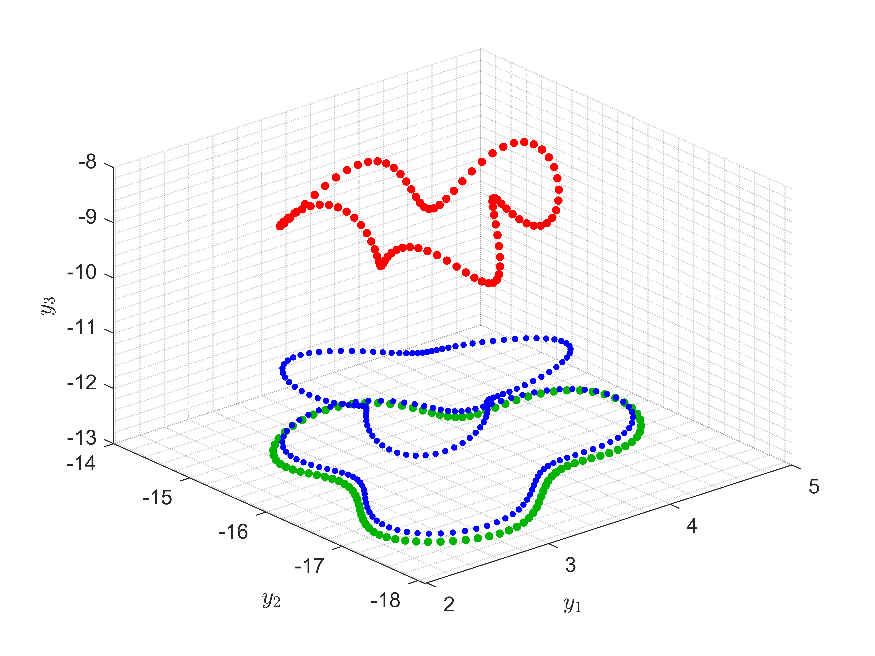}}
  \caption{Sequences $\{F(x_{k})\}$ generated by Algorithm \ref{algo} with HZ rule in the image space for some problem at a given point}{\label{G1}}
\end{figure}

\section{Conclusion}\label{sec_6}
In this work, we have proposed a nonlinear Hager-Zhang conjugate gradient method (Algorithm \ref{algo}) for solving $\mathcal{SOP}_{\ell}$ \eqref{SPL} without requiring regularity conditions on the optimal solution or without assuming that the cone $K$ is finitely generated.
Toward this, we have used the generator $C$ and Drummond-Svaiter scalarization function given in \eqref{gen_polar_cone} and \eqref{Dr_sc_fun}, respectively. Subsequently, the Wolfe line search conditions \eqref{Arm with stan. wol.} and \eqref{Arm with strong. wol.} have also been discussed.
Then, we have established the existence of a step length $\alpha_{k}$ in a $K$-descent direction that holds the strong Wolfe condition \eqref{Arm with strong. wol.} (Theorem \ref{Exist_ST_L}), which ensures that the standard Wolfe line search condition \eqref{Arm with stan. wol.} is also satisfied for the same descent direction $d_{k}$ and step length $\alpha_{k}$.
Thereafter, a scalar conjugate gradient parameter $\beta^{\mathrm{HZ}}_k$ for Hager-Zhang has been introduced in \eqref{BHZ} to define the direction $d_k$ in Algorithm \ref{algo}. Then, the well-definedness of the proposed method is discussed, assuming that the direction $d_{k}$ is $K$-descent. This well-definedness is based on the existence of a point of minima $(a_{k},u_{k})$ of the function $\mathcal{V}_{x}$ given in \eqref{Vxd} and a step length $\alpha_{k}$ which holds the standard Wolfe line search condition \eqref{Arm with stan. wol.} at each iteration $k$. 
The global convergence of the proposed method has been reported (Theorem \ref{T_Global}). In this sequel, we have shown that the direction $d_{k}$, generated by Algorithm \ref{algo}, is $K$-descent (Theorem \ref{P_S_D}). Moreover, we have proved a Zoutendijk-like condition (Theorem \ref{the. zoun.}). Additionally, it is shown that $\{\|u_{k}\|\}$ is bounded (Lemma \ref{Lemma_U_B}). To ensure convergence, we use the contradiction approach in which, firstly, we assume $\|u_{k}\|\geq l_{b}$ for all $k\geq 0$, where $l_b$ is a constant. By using this assumption, we have provided an estimation of the parameter $\beta_{k}$ (Lemma \ref{Lemma_est_Beta}) and proved that the sequence $\{\|d_{k}\|\}$ is unbounded (Lemma \ref{Lemma_dir_bdd}). Then, it has been reported that $\{\|d_{k}\|\}$ is bounded (Theorem \ref{T_Global}). Finally, the practical effectiveness of the proposed method has been validated by comparing its performance with the PRP and HS methods, as detailed in Table \ref{performance_table} and in Fig. \ref{performance_profiles_graph}. These comparisons highlight the advantages of the proposed technique in solving $\mathcal{SOP}_{\ell}$ \eqref{SPL}.\\

In future work, several directions may be explored. One promising direction is applying the proposed methods to solve uncertain optimization problems with finite uncertainty in the direction of the study \cite{Ghosh2024newton}. Additionally, extending our results to other set relations could provide valuable insights into the generality of the approach. Moreover, various other variants of the nonlinear conjugate gradient method could be adapted for $\mathcal{SOP}_{\ell}$, further enhancing the applicability and robustness of the proposed technique.

\subsection*{Acknowledgement}
Debdas Ghosh acknowledges the financial support of the Core Research Grant (CRG/2022/001347) by the Science and Engineering Research Board, India. Ravi Raushan thankfully acknowledges financial support from CSIR, India, through a research fellowship (File No. 09/1217(13822)/2022-EMR-I) to carry out this research work. Zai-Yun Peng was supported by the National Natural Science Foundation of China (12271067).

\subsection*{Data availability}
There is no data associated with this paper.

\end{document}